\newtheorem{theo}{Theorem}[section]
\newtheorem{prop}[theo]{Proposition}
\newtheorem{lemm}[theo]{Lemma}
\newtheorem{coro}[theo]{Corollary}
\theoremstyle{definition}
\newtheorem{defi}{Definition}
\newtheorem{exam}[theo]{Example}
\newtheorem{prob}[theo]{Problem}
\newtheorem{constr}[theo]{Construction}
\theoremstyle{remark}
\newtheorem*{rema}{Remark}
\numberwithin{equation}{section}
\newcommand{\mb}[1]{{\textbf {\textit#1}}}% bold italic letters
\newcommand{\field}[1]{\mathbb{#1}}
\def\C{\field{C}}
\newcommand{\R}{\field{R}}
\DeclareMathOperator{\Tor}{Tor}
\DeclareMathOperator{\ko}{k}
\DeclareMathOperator{\conv}{conv}
\DeclareMathOperator{\MF}{MF}
\def\ge{\geqslant}
\newcommand{\zp}{\mathcal Z_P}
\newcommand{\zk}{\mathcal Z_K}
\begin{document}

\title[DIRECT FAMILIES OF POLYTOPES]{DIRECT FAMILIES OF POLYTOPES\\ WITH NONTRIVIAL MASSEY PRODUCTS}

\author{Victor Buchstaber}
\address{Department of Mathematics and Mechanics, Moscow
State University, 1 Leninskie Gory, Moscow, Russia}
\address{Steklov Mathematical Institute of the Russian Academy of Sciences, 8 Gubkina street 8, Moscow, Russia}
\email{buchstab@mi.ras.ru}

\author{Ivan Limonchenko}
\address{School of Mathematical Sciences, Fudan University, 220 Handan Road, Shanghai, P.R. China}
\email{ilimonchenko@fudan.edu.cn}

\thanks{The first author was supported by the Russian Foundation for Basic Research, grants nn.~16-51-55017 and~17-01-00671. The second author was supported by the General Financial Grant from the China Postdoctoral Science Foundation, grant no. 2016M601486.}

\subjclass[2010]{Primary 13F55, 55S30, Secondary 52B11}

\keywords{Family of polytopes, polyhedral product, moment-angle manifold, Massey product, simple polytope, generating series, nestohedron, graph-associahedron}

\maketitle

\begin{abstract}
The problem of existence of nontrivial Massey products in cohomology of a space is well-known in algebraic topology and homological algebra. A number of problems in complex geometry, symplectic geometry and algebraic topology can be stated in terms of Massey products. One of such problems is to establish formality of smooth manifolds in rational homotopy theory. There have already been constructed a few classes of spaces with nontrivial triple Massey products in cohomology. Until now, very few examples of manifolds $M$ with nontrivial higher Massey products in $H^*(M)$ were known. In this work we introduce a sequence of smooth closed manifolds $\{M_{k}\}^{\infty}_{k=1}$ such that $M_{k}\hookrightarrow M_{k+1}$ is a submanifold and a retract of $M_{k+1}$ for any $k\geq 1$ and there exists a nontrivial Massey product $\langle\alpha_{1},\ldots,\alpha_{n}\rangle$ in $H^*(M_{k})$ for each $2\leq n\leq k$. The sequence $\{M_k\}^{\infty}_{k=1}$ is determined by a new family of flag nestohedra $\mathcal P_{Mas}$. We give P.D.E. for the two-parametric generating series of $\mathcal P_{Mas}$. 
\end{abstract}

\section{Introduction and main results}

The aim of this paper is to develop a theory of families of polytopes $\mathcal P=\{P^n|\,n\geq 0\}$ such that there exists a nontrivial $k$-fold Massey product in cohomology of a moment-angle manifold $\mathcal Z_{P^n}$, $k\to\infty$ as $n\to\infty$. Namely, we construct a family of flag nestohedra $\mathcal P_{Mas}=\{P^{n}|\,n\geq 0\}$, where $P^0$ is a point, $P^1$ is a segment, $P^2$ is a square, and $P^n, n>2$ is such that there exists a nontrivial strictly defined Massey product $\langle\alpha_{1},\ldots,\alpha_{k}\rangle$ in $H^*(\mathcal Z_{P^n})$ with $\dim\alpha_{i}=3$ for each $2\leq k\leq n$. We introduce new notions for families of polytopes: a direct family of polytopes (DFP) and a direct family of polytopes with nontrivial Massey products (DFPM). Our main result is that the family $\mathcal P_{Mas}$ is DFP and DFPM.

Obviously, any defined $k$-fold Massey product in $H^*(\zp)$ is trivial for $k>2$ when $P=I^n$ and is trivial for $k>1$ when $P=\Delta^n$.
The first construction of a nontrivial Massey product in cohomology of a moment-angle-complex $\zk$ was given in the work of Baskakov~\cite{BaskM} (2003) (see details in~\cite[Construction 4.9.1]{TT}). It was shown that there exists a class of triangulated spheres $K$ obtained from a join of three triangulated spheres of arbitrary dimensions by performing two certain stellar subdivisions such that there is a defined and nontrivial triple Massey product $\langle\alpha_{1},\alpha_{2},\alpha_{3}\rangle$ in $H^*(\zk)$.   

Denham and Suciu~\cite{DS} (2007) proved necessary and sufficient conditions on combinatorics of a simplicial complex $K$ under which there exists a nontrivial triple Massey product of 3-dimensional cohomology classes $\alpha_1,\alpha_2,\alpha_3$ of $\zk$.

The following definition plays a crucial role in what follows.
A \emph{family of polytopes} $\mathcal F$ is a set of polytopes such that for any $n\geq 0$ the subset $\mathcal F_n$ of its $n$-dimensional elements is nonempty and finite. A family $\mathcal F$ is called a \emph{line family} if each $\mathcal F_i$ consists of exactly one polytope.
Note that $\mathcal F_{0}=\{pt\}, \mathcal F_{1}=\{I\}$ for any family of polytopes $\mathcal F$. 

It was proved in~\cite[Lemma 2.13]{B-L} that if $P$ is flag and $F\subset P$ is its face then the maps $\hat{i}_{F}:\mathcal Z_{F}\to\mathcal Z_P$ and $\hat{j}_{F,P}:\mathcal Z_{K_F}\to\mathcal Z_{K_P}$, induced by embeddings of a face $i_{F,P}:\,F\to P$ and that of a full subcomplex $K_F\to K_P$, are linked by canonical homeomorphisms $\mathcal Z_F\to\mathcal Z_P$ and $\mathcal Z_{K_F}\to\mathcal Z_{K_P}$ into a commutative diagram. One of our key results is that the same holds for any nestohedron (not necessarily flag), see Definition~\ref{nest}. Namely, we show that when $P=P_B$ is a nestohedron, the face embedding $i_{P_{B|_S}}$ and the embedding of full subcomplexes $j_{P_{B|_S}}$ induce mappings of moment-angle manifolds and complexes that fit in a commutative diagram, see Proposition~\ref{phiPsi}.

In the work of Buchstaber~\cite{B} (2008) the bigraded ring of simple polytopes $(\mathcal P,d)$ was constructed, where the differential $d$ sends a polytope $P$ to a disjoint union of its facets. In~\cite{B} and the papers by Buchstaber and Volodin~\cite{BV, BV2} partial differential equations were obtained such that their solutions are 2-parametric generating series of well-known line families of nestohedra $\{P^n|\,n\geq 0\}$, where $P^n$ is an $n$-dimensional simplex $\Delta^n$, cube $I^n$, associahedron $As^n$, cyclohedron $Cy^n$, permutohedron $Pe^n$, or a stellahedron $St^n$. Such a P.D.E. carries important information on how rich is the combinatorics of the family. 

In this work in terms of the operator $d$ on $\mathcal P$ we introduce the notion of {\emph{complexity}} of a family $\mathcal F$. Namely, $\mathcal F$ has complexity $k$ if its $d$-closure is a union of $k$ line families, see Definition~\ref{complex}. In particular, associahedra and permutohedra families have complexity 1, stellahedra and cyclohedra families have complexity 2.

Using~\cite[Theorem 6.1.1]{DS}, Limonchenko~\cite{L2} proved that for any graph-associahedron $P\neq Pe^3$ there exists a nontrivial triple Massey product $\langle\alpha_{1},\alpha_{2},\alpha_{3}\rangle$ with $\dim\alpha_{i}=3$ in $H^*(\zp)$ and any defined triple Massey product of 3-dimensional cohomology classes for $\mathcal Z_{Pe^3}$ is trivial. 
In~\cite{L3} he constructed a nontrivial triple Massey product $\langle\alpha_{1},\alpha_{2},\alpha_{3}\rangle$ in $H^*(\mathcal Z_{Pe^3})$, where $\dim\alpha_2=3$ and $\dim\alpha_1=\dim\alpha_3=5$.
 
First examples of moment-angle manifolds $\zp$ with higher nontrivial Massey products $\langle\alpha_{1},\ldots,\alpha_{k}\rangle$ for any $k\geq 4$ in $H^*(\zp)$ were constructed by Limonchenko~\cite{L1,L2}. He introduced a family $\mathcal Q=\{Q^n|\,n\geq 0\}$ such that there exists a nontrivial Massey product $\langle\alpha_{1},\ldots,\alpha_{n}\rangle$ of order $n$ with $\dim\alpha_{i}=3,1\leq i\leq n$ in $H^*(\mathcal Z_{Q^n})$ for any $n\geq 2$. Recently, Limonchenko also introduced a family of moment-angle manifolds of arbitrarily large connectedness having nontrivial Massey products $\langle\alpha_{1},\ldots,\alpha_{k}\rangle$ in cohomology for any $k\geq 3$, see~\cite{L3}.

It was proved in~\cite{PRW06} that if $P_B$ is a flag polytope, then there exists a building set $B_{0}\subseteq B$ such that $P_{B_0}$ is a combinatorial cube with $\dim P_{B_0}=\dim P_{B}$. Therefore, the class of polytopes obtained from cubes by truncation of faces contains all flag nestohedra, in particular, all graph-associahedra. Moreover, it was proved in~\cite{BV}, that a nestohedron is flag if and only if it can be realized as a \emph{2-truncated cube}, that is a polytope obtained from a cube as a result of a sequence of cutting off faces of codimension 2 only. The polytopes $Q^n$ provide us with an important family $\mathcal Q$ of 2-truncated cubes, which are not nestohedra for $n\geq 4$. 

Among the new notions that we introduce in this paper, the following two are the main ones. We denote by $m(P^n)$ the number of facets of an $n$-dimensional polytope $P$. In what follows, when the choice of $P$ is clear, we write $m(n)$, or even $m$, instead of $m(P^n)$ to simplify our notation.

\begin{defi}\label{ADFP}
A sequence of polytopes $\mathcal F=\{P^n|\,n\geq 0\}$ is called an \emph{algebraic direct family} (ADFP) if the following conditions hold:
\begin{itemize}
\item[(1)] For any $r$ and $n>r$ there exists a face $F^r$ of $P^n$ such that $F^{r}$ is combinatorially equivalent to $P^r$ and $\{P^n, i_{r}^{n}\}$ forms a direct system of polytopes with respect to face embeddings $i_{r}^{n}:\,P^{r}\hookrightarrow\partial P^n\subset P^n$ (we identify $F^{r}$ with $P^r$); 
\item[(2)] For any $r$ and $n>r$ there exists a vertex set $J\subset [m(n)]=\{1,2,\ldots,m(n)\}$ such that the full subcomplex of $K_{P^n}$ on $J$ is combinatorially equivalent to $K_{P^r}$ and $\{K_{P^n}, j_{r}^{n}\}$ forms a direct system of triangulated spheres with respect to simplicial embeddings $j_{r}^{n}:\,K_{P^{r}}\hookrightarrow K_{P^n}$ induced by $J\subset [m(n)]$ (we identify $(K_{P^n})_J$ with $K_{P^r}$).
\end{itemize}
\end{defi}

Note that if $\mathcal F$ consists of flag polytopes, then (1) implies (2) due to~\cite[Theorem 2.16]{B-L}. Note also that (2) implies that $\{H^*(\mathcal Z_{P^n}), (j_{r}^{n})^{*}\}$ forms an inverse system of rings with respect to split ring epimorphisms $(j_{r}^{n})^*$, see~\cite[Corollary 2.7]{B-L}, which motivates the name 'algebraic' in the definition above. Each $i_{r}^n$ also induces a map of moment-angle manifolds $\hat{\phi}_{r}^{n}:\,\mathcal Z_{P^r}\rightarrow\mathcal Z_{P^n}$, see \cite[Construction 2.8]{B-L}. 

\begin{defi}\label{GDFP}
Suppose $\mathcal F$ satisfies the conditions (1) and (2) of the above definition alongside with the following addition condition:
\begin{itemize}
\item[(3)] $(\hat{\phi}_{r}^{n})^{*}$ and $(j_{r}^{n})^{*}$
are equivalent as group homomorphisms.
\end{itemize}
It follows that $(\hat{\phi}_{r}^{n})^{*}:\,H^*(\mathcal Z_{P^n})\rightarrow H^*(\mathcal Z_{P^r})$ are also split ring epimorphisms. In this case we call $\mathcal F$ a \emph{geometric direct family} (GDFP).
\end{defi}

Observe that, by the above argument, any ADFP consisting of flag polytopes is a GDFP. One of the purposes of this work is to construct examples of geometric direct families of (not necessarily flag) polytopes possessing nice combinatorial properties. This problem is solved by means of nestohedra theory.

\begin{defi}\label{DFPM}
A direct family of polytopes $\mathcal F=\{P^n|\,n\geq 0\}$ is called \emph{a direct family with nontrivial Massey products} (DFPM) if there exists a nontrivial Massey product of order $k$ in $H^*(\mathcal Z_{P^n})$ with $k\to\infty$ as $n\to\infty$.
\end{defi}

Note that if $\mathcal F$ satisfies conditions (1) and (2) of Definition~\ref{ADFP}, the following property immediately follows from~\cite[Corollary 2.7]{B-L} and Theorem~\ref{BPtheo} below. If $\langle\alpha_{1}^{r},\ldots,\alpha_{k}^{r}\rangle$ is a defined Massey product in $H^*(\mathcal Z_{P^r})$, then for each $n>r$ there exists a defined Massey product $\langle\alpha_{1}^{n},\ldots,\alpha_{k}^{n}\rangle$ in $H^*(\mathcal Z_{P^n})$ such that $(j_{r}^{n})^{*}(\alpha_{t}^{n})=\alpha_{t}^{r}$ for all $1\leq t\leq k$ and 
$$
(j_{r}^{n})^{*}\langle\alpha_{1}^{n},\ldots,\alpha_{k}^{n}\rangle=\langle\alpha_{1}^{r},\ldots,\alpha_{k}^{r}\rangle.
$$

By~\cite[Theorem 3.2]{B-L}, the family of 2-truncated cubes $\mathcal Q=\{Q^n|\,n\geq 0\}$ introduced in~\cite{L1} is a geometric direct family of polytopes with nontrivial Massey products.

If $\mathcal F$ satisfies conditions (1)-(3) of Definition~\ref{GDFP}, the following property takes place by~\cite[Lemma 2.13]{B-L}. If $\langle\alpha_{1}^{r},\ldots,\alpha_{k}^{r}\rangle$ is a defined Massey product in $H^*(\mathcal Z_{P^r})$, then for each $n>r$ there exists a defined Massey product $\langle\alpha_{1}^{n},\ldots,\alpha_{k}^{n}\rangle$ in $H^*(\mathcal Z_{P^n})$ such that $(\hat{\phi}_{r}^{n})^{*}(\alpha_{t}^{n})=\alpha_{t}^{r}$ for all $1\leq t\leq k$ and 
$$
(\hat{\phi}_{r}^{n})^{*}\langle\alpha_{1}^{n},\ldots,\alpha_{k}^{n}\rangle=\langle\alpha_{1}^{r},\ldots,\alpha_{k}^{r}\rangle.
$$

We show that the well-known families of nestohedra $\mathcal P=\{P^n|\,n\geq 0\}$ (except for simplices) mentioned above are geometric direct families of polytopes. 

{\bf{Problem I.}}
Are their direct families with nontrivial Massey products satisfying conditions (1)-(3) above among the families of associahedra $As$, cyclohedra $Cy$, stellahedra $St$, or permutohedra $Pe$?

A sequence of connected graphs $\Gamma=\{\Gamma_{n}|\,n\geq 0,\Gamma_{n}\,\text{is on }[n+1]=\{1,2,\ldots,n+1\}\}$ is called a \emph{direct family of graphs} (DFG) if for any $r$ and $n>r$ there exists a subset of vertices $S\subset [n+1]$ such that the indused subgraph $\Gamma_{n}|_{S}$ is isomorphic to $\Gamma_{r}$ and $\{\Gamma_{n}, k_{r}^{n}\}$ forms a direct system of graphs with respect to graph embeddings $k_{r}^{n}:\,\Gamma_{r}\hookrightarrow\Gamma_{n}$ induced by $S\subset [n+1]$ (we identify $\Gamma_{n}|_S$ with $\Gamma_{r}$).

Observe that the families of graphs of associahedra, cyclohedra, stellahedra, and permutohedra all are direct families of graphs. It follows from the above definitions that $\mathcal P_{\Gamma}=\{P_{\Gamma_n}|\,n\geq 0\}$ is a geometric direct family of polytopes for any direct family of graphs $\Gamma$. Then it is natural to state the following problem.

{\bf{Problem II.}}
Is there a sequence of graphs $\{\Gamma_{n}|\,n\geq 0\}$ such that there exists a nontrivial $k$-fold Massey product in cohomology of a moment-angle manifold $\mathcal Z_{\Gamma_n}$ with $k\to\infty$ as $n\to\infty$?

The family of 2-truncated cubes $\mathcal Q$ was our starting point in constructing a geometric direct family $\mathcal P_{Mas}$ of flag nestohedra with nontrivial Massey products, which we describe in Definition~\ref{familyP} below. The polytopes $P^n_{Mas}\in\mathcal P_{Mas}$ are not graph-associahedra for $n\geq 3$, therefore, it is natural to state the following problem.

{\bf{Problem III.}}
Is there a DFG $\Gamma$ such that the corresponding family of graph-associahedra $\mathcal P_{\Gamma}$ is a direct family with nontrivial Massey products?

We call a direct family $\mathcal F=\{P^n|\,n\geq 0\}$ of polytopes with nontrivial Massey products {\emph{special}} if for any $n\geq 2$ there exists a strictly defined nontrivial Massey product $\langle\alpha_{1},\ldots,\alpha_{k}\rangle$ in $H^*(\mathcal Z_{P^n})$ for each $2\leq k\leq n$.

Our main result, see Theorem~\ref{MasAllOrder} and Theorem~\ref{DirFamNontrivial}, is that the family $\mathcal P_{Mas}$ is a special geometric direct family of flag nestohedra with nontrivial Massey products, having an additional condition that all $\dim\alpha_{i}=3$. 

Families of nestohedra that we constructed in this paper all have complexity not greater than 4. Therefore, the following problem naturally arises.

{\bf{Problem IV.}}
To describe and construct families of polytopes with finite complexity.

\section{Basic notions and motivation}

Here we recall only the notions which are the key ones for our constructions and results. For the definitions, constructions, and results on other notions that we are using in this work, we refer the reader to the monograph~\cite{TT}. 

\begin{defi}
An \emph{(abstract) simplicial complex} $K$ on the vertex set $[m]=\{1,2,\ldots,m\}$ is a set of subsets of $[m]$ called its \emph{simplices} such that:
\begin{itemize}
\item[(1)] The empty set $\varnothing$ is a simplex of $K$;
\item[(2)] If $\tau\subset\sigma$ and $\sigma\in K$, then $\tau\in K$.
\end{itemize}
\end{defi}

In what follows we assume, unless otherwise is stated explicitly, that there are no \emph{ghost vertices} in $K$, that is $\{i\}\in K$ for all $i\in [m]$.
We call a {\emph{full subcomplex}} $K_J$ on the vertex set $J\subseteq [m]$ the set of all elements in $K$ with vertices in $J$, that is $K_{J}=2^{J}\cap K$.

\begin{constr}(moment-angle-complex)\label{mac}
Suppose $K$ is a simplicial complex on $[m]$. Then define its \emph{moment-angle-complex} $\zk$ to be
$$
\zk=\cup_{\sigma\in K} (D^2,S^1)^{\sigma}, 
$$
where $(D^2,S^1)^{\sigma}=\prod\limits_{i=1}^{m}\,Y_{i}$ such that 
$Y_{i}=D^2$, if $i\in\sigma$, and $Y_{i}=S^1$, otherwise.

Buchstaber and Panov, see~\cite[Chapter 4]{TT}, proved that $\zk$ is a CW complex for any $K$. 
\end{constr}

In our paper we use the following definition of a simple convex polytope. 

\begin{defi}\label{Simplepolytopes}
A \emph{simple convex $n$-dimensional polytope} $P$ in
the Euclidean space $\R^n$ with scalar product
$\langle\;,\:\rangle$ can be defined as a bounded
intersection of $m$ halfspaces:
%\begin{equation}\label{ptope}
 $$
  P=\bigl\{\mb x\in\R^n\colon\langle\mb a_i,\mb
  x\rangle+b_i\ge0\quad\text{for }
  i=1,\ldots,m\bigr\},\eqno (1)
$$
%\end{equation}
where $\mb a_i\in\R^n$, $b_i\in\R$. We assume that its \emph{facets}  
$$
  F_i=\bigl\{\mb x\in P\colon\langle\mb a_i,\mb
  x\rangle+b_i=0\bigr\},\quad\text{for } i=1,\ldots,m.
$$
are in general position and that there are no redundant inequalities
in $(1)$, %~\eqref{ptope}, 
that is, no inequality can be removed
from $(1)$ %~\eqref{ptope} 
without changing~$P$. The latter is equivalent to the condition that its \emph{nerve complex} $K_P=\partial P^*$ has no ghost vertices.
\end{defi}

\begin{constr}\label{simpmultwedge}(\cite{BBCG15})
Let $J=(j_{1},\ldots,j_{m})$ be an $m$-tuple of positive integers. Consider the following vertex set:
$$
m(J)=\{11,\ldots,1j_{1},\ldots,m1,\ldots,mj_{m}\}.
$$
To define \emph{a simplicial multiwedge, or a $J$-construction} of $K$, which is a simplicial complex $K(J)$ on $m(J)$, we say that the set of its minimal non-faces $\MF(K(J))\subset 2^{[m(J)]}$ consists of the subsets of $m(J)$ of the type 
$$
I(J)=\{i_{1}1,\ldots,i_{1}j_{i_1},\ldots,i_{k}1,\ldots,i_{k}j_{i_k}\},
$$
where $I=\{i_{1},\ldots,i_{k}\}\in\MF(K)$. Note that if $J=(1,\ldots,1)$, then $K(J)=K$. 

Suppose $K=K_P$ is a polytopal sphere on $m$ vertices. Then $K(J)$ is always a polytopal sphere and thus a nerve complex of a simple polytope $Q=P(J)$, and so $K_{P(J)}=K_{P}(J)$.
\end{constr}

The next construction appeared firstly in the work of Davis and Januszkiewicz~\cite{DJ}.

\begin{constr}\label{mamfdDJ}
Suppose $P^n$ is a simple convex polytope with the set of facets $F=\{F_{1},\ldots,F_{m}\}$. 
Denote by $T^{F_{i}}$ a 1-dimensional coordinate subgroup in $T^{F}\cong T^{m}$ for each $1\leq i\leq m$ and $T^{G}=\prod\,T^{F_i}\subset T^{F}$ for a face $G=\cap\,F_{i}$ of a polytope $P^n$. Then the \emph{moment-angle manifold} $\zp$ over~$P$ is defined as a quotient space
$$
\zp=T^{F}\times P^{n}/\sim,
$$
where $(t_{1},p)\sim (t_{2},q)$ if and only if $p=q\in P$ and $t_{1}t_{2}^{-1}\in T^{G(p)}$, $G(p)$ is a minimal face of $P$ which contains $p=q$. 
\end{constr}

\begin{rema} 
It can be deduced from Construction~\ref{mamfdDJ} that if $P_{1}$ and $P_{2}$ are \emph{combinatorially equivalent}, i.e. their face lattices are isomorphic, then $\mathcal Z_{P_{1}}$ and $\mathcal Z_{P_{2}}$ are homeomorphic. The opposite statement is {\emph{not}} true. Moreover, $\mathcal Z_{P_{1}\times P_2}$ is homeomorphic to $\mathcal Z_{P_1}\times\mathcal Z_{P_2}$.
\end{rema}

%%%%%%%%%%%%%%%%%%%%%%%%%%%%%%%%%%%%%%

Let $A_P$ be the $m\times n$ matrix of row vectors $\mb a_i\in\mathbb{R}^n$, and
let $\mb b_P\in\mathbb{R}^m$ be the column vector of scalars $b_i\in\R$. Then we
can rewrite $(*)$ %~\eqref{ptope} 
as
\[
  P=\bigl\{\mb x\in\R^n\colon A_P\mb x+\mb b_P\ge\mathbf 0\},
\]
and consider the affine map
\[
  i_P\colon \R^n\to\R^m,\quad i_P(\mb x)=A_P\mb x+\mb b_P.
\]
It embeds $P$ into
\[
  \R^m_\ge=\{\mb y\in\R^m\colon y_i\ge0\quad\text{for }
  i=1,\ldots,m\}.
\]

Moment-angle manifolds have already become key objects of study in Toric Topology, see~\cite{TT}. Buchstaber and Panov gave the following definition and proved it to be equivalent to the one above.
 
\begin{constr}\label{mamfdBP}
Define a \emph{moment-angle manifold} $\mathcal Z_P$ of a polytope $P$
as a pullback from the commutative diagram
$$\begin{CD}
  \mathcal Z_P @>i_Z>>\C^m\\
  @VVV\hspace{-0.2em} @VV\mu V @.\\
  P @>i_P>> \R^m_\ge
\end{CD}\eqno %(1.2)
$$
where $\mu(z_1,\ldots,z_m)=(|z_1|^2,\ldots,|z_m|^2)$. The projection $\zp\rightarrow P$ in the above diagram is the quotient map of the canonical action of the compact torus $\mathbb{T}^m$ on $\zp$ induced by the standard action of $\mathbb{T}^m$
\[
  \mathbb T^m=\{\mb z\in\C^m\colon|z_i|=1\quad\text{for }i=1,\ldots,m\}
\]
on~$\C^m$. Therefore, $\mathbb T^m$ acts on $\zp$ with an orbit space $P$, and $i_Z$ is a $\mathbb T^m$-equivariant embedding.
\end{constr}

It follows immediately from the Construction~\ref{mamfdBP}, see~\cite[\S3]{BR}, that $\zp$ is a total intersection of Hermitian quadrics in $\C^m$. Thus, $\zp$ obtains a canonical equivariant smooth structure. For any simple $n$-dimensional polytope $P$ with $m$ facets its moment-angle manifold $\zp$ is 2-connected closed $(m+n)$ dimensional manifold. Furthermore, in~\cite{bu-pa00-2} a canonical equivariant homeomorphism $h_{P}:\,\mathcal Z_{P}\to\mathcal Z_{K_P}$ for any simple polytope $P$ was constructed. 

Now we recall a description of the cohomology algebra of $\zp$ and higher Massey products in it. 

Let $\ko$ be a commutative ring with a unit. Throughout the paper, unless otherwise is stated explicitly, we denote by $K$ an $(n-1)$-dimensional simplicial complex on the vertex set $[m]=\{1,2,\ldots,m\}$ and by $P$ a simple convex $n$-dimensional polytope with $m=m(n)$ facets: $\mathcal F(P)=\{F_{1},\ldots,F_{m}\}$. 

\begin{defi}\label{Facerings}%\begin{defin}%[Stanley--Reisner algebra]
A \emph{face ring} (or a \emph{Stanley-Reisner ring}) of $K$ is the quotient ring
$$
   \ko[K]:=\ko[v_{1},\ldots,v_{m}]/I_K
$$
where $\deg v_{i}=2$ and $I_K$ is the ideal generated by square free
monomials $v_{i_{1}}\cdots{v_{i_{k}}}$ such that $\{i_{1},\ldots,i_{k}\}\notin K$.\\
We call a \emph{face ring of a polytope} $P$ the Stanley-Reisner ring of its nerve complex: $\ko[P]=\ko[K_P]$.
\end{defi}

Note that $\ko[P]$ is a module over $\ko[v_{1},\ldots,{v_{m}}]$ via
the quotient projection. 
The following result relates the cohomology algebra of $\zp$ to combinatorics of $P$.

\begin{theo}[{\cite[Theorem 4.5.4]{TT} or \cite[Theorem 4.7]{P}}]\label{BPtheo}
The following statements hold.
\begin{itemize}
\item[(I)] The isomorphisms of algebras hold:
$$
\begin{aligned}
  H^*(\zp;\ko)&\cong\Tor_{\ko[v_1,\ldots,v_m]}^{*,*}(\ko[P],\ko)\\
  &\cong H^{*,*}\bigl[\Lambda[u_1,\ldots,u_m]\otimes \ko[P],d\bigr]\\
  &\cong \bigoplus\limits_{J\subset [m]}\widetilde{H}^*(P_{J};\ko),
\end{aligned}
$$
$\mathop{\mathrm{bideg}} u_i=(-1,2),\;\mathop{\mathrm{bideg}} v_i=(0,2);\quad
  du_i=v_i,\;dv_i=0.$
Here we denote $P_J=\cup_{j\in J}\,F_{j}$.
The last isomorphism is the sum of isomorphisms of $\ko$-modules:
$$
H^p(\zp;\ko)\cong\sum\limits_{J\subset [m]}\widetilde{H}^{p-|J|-1}(P_{J};\ko);
$$

\item[(II)] If we define a finitely generated differential graded algebra $R(P)=\Lambda[u_{1},\ldots,u_{m}]\otimes\ko[P]/(v_{i}^{2}=u_{i}v_{i}=0,1\leq i\leq m)$ with the same $d$ as above, then a graded algebra isomorphism holds: 
$$
H^{*,*}(\zp;\ko)\cong H^{*,*}[R(P),d]\cong\Tor^{*,*}_{\ko[v_{1},\ldots,v_{m}]}(\ko[P],\ko).
$$
These algebras admit $\mathbb{N}\oplus\mathbb{Z}^m$-multigrading ($i\in\mathbb{N}$ and $J\in\mathbb{Z}^m$ below), which is respected by $d$, and we have
$$
\Tor^{-i,2{\bf{a}}}_{\ko[v_{1},\ldots,v_{m}]}(\ko[P],\ko)\cong H^{-i,2{\bf{a}}}[R(P),d], 
$$
where $\Tor^{-i,2J}_{\ko[v_{1},\ldots,v_{m}]}(\ko[P],\ko)\cong\widetilde{H}^{|J|-i-1}(P_{J};\ko)$
for $J\subseteq [m]$. The multigraded component $\Tor^{-i,2{\bf{a}}}_{\ko[v_{1},\ldots,v_{m}]}(\ko[P],\ko)=0$, if ${\bf{a}}$ is not a $(0,1)$-vector of length $m$. 
\end{itemize}
\end{theo}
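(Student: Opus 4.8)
The plan is to reduce everything to the moment-angle complex and then to commutative homological algebra. By the canonical equivariant homeomorphism $h_P\colon\zp\to\mathcal Z_{K_P}$ recalled above, it suffices to prove all the assertions for $\zk$ with $K=K_P$. Recall that the Borel construction $\zk\times_{\mathbb T^m}\mathrm E\mathbb T^m$ is the Davis--Januszkiewicz space $\mathrm{DJ}(K)=(\underline{\mathbb CP^\infty},\underline{\mathrm{pt}})^K$, with $H^*(\mathrm{DJ}(K);\ko)\cong\ko[K]$, and that $\zk$ is the homotopy fibre of the inclusion $\mathrm{DJ}(K)\hookrightarrow B\mathbb T^m=(\mathbb CP^\infty)^m$; in cohomology this inclusion realizes the quotient projection $\ko[v_1,\dots,v_m]=H^*(B\mathbb T^m;\ko)\twoheadrightarrow\ko[K]$.

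The core of the argument is that the Koszul-type dga $R(K):=R(P)$ for $K=K_P$, defined in the statement, is a cochain algebra model of $\zk$. To prove this one uses the product CW-structure on $\zk\subset(D^2)^m$ — here we use that $\zk$ is a CW-complex — and constructs an explicit chain homotopy exhibiting $R(K)$ as a multiplicative deformation retract of the cellular cochains of $\zk$. Equivalently, one shows that the Eilenberg--Moore spectral sequence $\Tor^{*,*}_{\ko[v_1,\dots,v_m]}(\ko[K],\ko)\Rightarrow H^*(\zk;\ko)$ of the fibration $\zk\to\mathrm{DJ}(K)\to B\mathbb T^m$ collapses at $E_2$. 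This step I expect to be the main obstacle. The collapse itself is forced by the $\mathbb Z^m$-multigrading that the $\mathbb T^m$-action induces on all the objects involved, refining the internal grading and preserved by every differential: inside a fixed multidegree $2\mathbf a$ the internal degree is constant (equal to $2|\mathbf a|$ on the $0$--$1$ locus), so a higher differential, which by its bidegree must change the internal degree, has to vanish. What genuinely requires work is the multiplicativity and naturality of the comparison map; granting these, $H^*(\zp;\ko)\cong H^{*,*}[R(P),d]$.

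Next, the Koszul resolution $\bigl(\Lambda[u_1,\dots,u_m]\otimes\ko[v_1,\dots,v_m],\,d\bigr)$ with $\mathop{\mathrm{bideg}}u_i=(-1,2)$, $\mathop{\mathrm{bideg}}v_i=(0,2)$, $du_i=v_i$, of $\ko$ over $\ko[v_1,\dots,v_m]$ yields, upon tensoring with $\ko[K]=\ko[P]$, the dga $\bigl(\Lambda[u_1,\dots,u_m]\otimes\ko[P],\,d\bigr)$ whose cohomology is, by construction, $\Tor^{*,*}_{\ko[v_1,\dots,v_m]}(\ko[P],\ko)$ as a bigraded algebra. The natural quotient $\Lambda[u_1,\dots,u_m]\otimes\ko[P]\twoheadrightarrow R(P)$ is a quasi-isomorphism of dga's: filtering by the total $v$-degree and treating one variable at a time reduces this to the observation that, for a single $i$, $\Lambda[u_i]\otimes\ko[v_i]$ with $du_i=v_i$ is acyclic and its relevant quotient has the same cohomology. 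Combining this with the previous paragraph gives $H^*(\zp;\ko)\cong H^{*,*}[R(P),d]\cong H^{*,*}\bigl[\Lambda[u_1,\dots,u_m]\otimes\ko[P],d\bigr]=\Tor^{*,*}_{\ko[v_1,\dots,v_m]}(\ko[P],\ko)$, which are the first two isomorphisms of (I) and the algebra isomorphisms of (II), with the stated bidegrees and differential. The multigrading statements are then immediate, since $d$ preserves the $\mathbb N\oplus\mathbb Z^m$-grading on both dga's; in particular $\Tor^{-i,2\mathbf a}$ vanishes unless $\mathbf a$ is a $0$--$1$ vector, because every monomial of $R(P)$ is squarefree in the $v_i$ and satisfies $u_i^2=v_i^2=u_iv_i=0$.

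Finally I would identify the multigraded pieces with reduced cohomology. Restricting the model to the multidegree $2J$, $J\subseteq[m]$, one recognizes the resulting complex as the suitably shifted augmented simplicial cochain complex of the full subcomplex $K_J$; this is Hochster's formula $\Tor^{-i,2J}_{\ko[v_1,\dots,v_m]}(\ko[P],\ko)\cong\widetilde H^{\,|J|-i-1}(K_J;\ko)$, with the convention $\widetilde H^{-1}(K_\varnothing;\ko)=\ko$. To pass from $K_J$ to $P_J=\bigcup_{j\in J}F_j$ I would invoke polar duality: it gives a PL-homeomorphism $\partial P\cong|K_P|=|\partial P^*|$ which, compatibly with barycentric subdivision, carries the facet $F_j$ onto the closed star of the vertex $\{j\}$ in $\mathrm{sd}\,K_P$, so that $P_J$ corresponds to $\bigcup_{j\in J}\overline{\mathrm{st}}(j)$, a regular neighbourhood of $K_J$ that deformation retracts onto it; hence $\widetilde H^*(P_J;\ko)\cong\widetilde H^*(K_J;\ko)$. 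Together with the degree bookkeeping $p=2|J|-i$, i.e.\ $|J|-i-1=p-|J|-1$, this yields the graded-module isomorphisms in (I) and the formula in (II), completing the proof. The main difficulty remains the collapse of the Eilenberg--Moore spectral sequence together with the compatibility of its product with the cup product — equivalently, that $R(P)$ is a multiplicative cochain model of $\zp$; everything else is bookkeeping and standard homological algebra.
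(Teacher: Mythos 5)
This statement is not proved in the paper at all: it is quoted verbatim as a known result, with the proof delegated to \cite[Theorem~4.5.4]{TT} and \cite[Theorem~4.7]{P}. There is therefore no in-paper argument to compare against; what you have written is a reconstruction of the standard proof from those references, and as a sketch it is essentially correct: reduction to $\mathcal Z_{K_P}$ via $h_P$, the fibration $\zk\to \mathrm{DJ}(K)\to B\mathbb T^m$, the Koszul resolution and the quasi-isomorphism onto $R(P)$, Hochster's decomposition, and the identification $\widetilde H^*(K_J)\cong\widetilde H^*(P_J)$ via closed stars in the barycentric subdivision are all the right steps with the right degree bookkeeping.

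Two points in your sketch carry the real content and are only asserted, not established. First, the claim that the Eilenberg--Moore spectral sequence collapses because ``the $\mathbb T^m$-action induces'' a $\mathbb Z^m$-multigrading on all pages: the torus action does not by itself put a multigrading on the higher pages; one must either realize the multigrading topologically through the polyhedral-product decomposition $\zk=\bigcup_{\sigma\in K}(D^2,S^1)^\sigma$ and naturality with respect to coordinate subcomplexes, or bypass the spectral sequence entirely. Second, the multiplicativity of the comparison map, which you correctly flag as the main obstacle but leave open. The cited sources resolve both at once by a different device: they construct an explicit cellular decomposition of $\zk$ with cells indexed by pairs $(\sigma,\omega)$, a cellular approximation of the diagonal making the cellular cochains into a dga, and a multiplicative quasi-isomorphism onto $R(P)$, so that neither the collapse nor the ring structure of the Eilenberg--Moore spectral sequence needs to be addressed separately. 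Your outline is a legitimate alternative route, but as written it defers precisely the steps that constitute the proof.
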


Now we recall one of the key definitions, namely, Massey operations in cohomology of a differential graded algebra. Our exposition follows that in~\cite[Appendix $\Gamma$]{BP04}; see also the work of Babenko and Taimanov~\cite{BaTa}.

\begin{defi}\label{DefiningSystem}
Suppose $(A,d)$ is a differential graded algebra, $\alpha_{i}=[a_{i}]\in H^{*}[A,d]$ and $a_{i}\in A^{n_{i}}$ for $1\leq i\leq k$.
Then a \emph{defining system} for $(\alpha_{1},\ldots,\alpha_{k})$ is a $(k+1)\times (k+1)$-matrix $C$ such that the following conditions hold:
\begin{itemize}
\item[{(1)}] $c_{i,j}=0$, if $i\geq j$,
\item[{(2)}] $c_{i,i+1}=a_{i}$,
\item[{(3)}] $a\cdot E_{1,k+1}=dC-\bar{C}\cdot C$ for some $a=a(C)\in A$, where $\bar{c}_{i,j}=(-1)^{deg{c_{i,j}}}\cdot c_{i,j}$ and $E_{1,k+1}$ is a $(k+1)\times (k+1)$-matrix with all elements equal to zero, except for that in the position $(1,k+1)$, which equals 1.
\end{itemize} 

It follows that $d(a)=0$ and $a=a(C)\in A^{m}$, $m=n_{1}+\ldots+n_{k}-k+2$. Therefore, a cohomology class $[a(C)]\in H^{m}[A,d]$ is defined.

A \emph{Massey product} $\langle\alpha_{1},\ldots,\alpha_{k}\rangle$ of order $k$ is said to be \emph{defined}, if there exists a defining system $C$ for it; in this case, by definition, 
$$
\langle\alpha_{1},\ldots,\alpha_{k}\rangle=\{\alpha=[a(C)]|\,C\text{ is a defining system}\}.
$$
A defined Massey product is called \emph{trivial}, or \emph{vanishing} if $[a(C)]=0$ for some defining system $C$. We refer to a defined Massey product of order $k>3$ as a {\emph{higher}} Massey product. 
\end{defi}

We end this section with a few crucial definitions that belong to the theory of polytopes. Following Feichtner--Sturmfels~\cite{FS} and Postnikov~\cite{Post05}, we now give the definitions of a building set and a nestohedron, which play a fundamental role in our work.

\begin{defi}\label{nest}
A set $B\subseteq 2^{[n+1]}$ is called a {\emph{building set}} on $[n+1]$ if the following two conditions hold:\\
1) For each $i\in [n+1]$: $\{i\}\in B$;\\
2) If $S_{1}, S_{2}\in B$ and $S_{1}\cap S_{2}\neq\varnothing$, then $S_{1}\cup S_{2}\in B$.

A building set $B$ is called {\emph{connected}} if $[n+1]\in B$.
A \emph{nestohedron} $P_B$ on a (connected) building set $B$ on $[n+1]$ is an $n$-dimensional simple convex polytope which is a Minkowski sum of simplices in $\mathbb{R}^{n+1}$ with a standard basis $\{e_{1},\ldots,e_{n+1}\}$:
$$
P_{B}=\sum\limits_{S\in B\backslash [n+1]}\,\conv(e_{i}|\,i\in S).
$$ 
\end{defi}

In what follows we always consider combinatorially equivalent polytopes as equal. The next statement is well known and is a direct consequence of the above definition.

\begin{coro}\label{BuildSetProd}
Suppose $B$ is a union of connected building sets $B_{1}$ on $[n_{1}+1]$ and $B_{2}$ on $[n_{2}+1]$, $[n_{1}+1]\cap [n_{2}+1]=\varnothing$. Then $P_{B}=P_{B_1}\times P_{B_2}$. 
\end{coro}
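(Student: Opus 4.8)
\textbf{Proof proposal for Corollary~\ref{BuildSetProd}.}

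The plan is to establish the combinatorial equivalence $P_B \cong P_{B_1}\times P_{B_2}$ directly from the Minkowski sum definition in Definition~\ref{nest}. First I would observe that since $B = B_1 \sqcup B_2$ with $B_1$ a connected building set on $[n_1+1]$ and $B_2$ a connected building set on $[n_2+1]$ living in disjoint index sets, the sum in the definition splits: the elements of $B\setminus[n+1]$ (where $[n+1] = [n_1+1]\sqcup[n_2+1]$, $n+1 = n_1+n_2+2$) are exactly the elements of $B_1\setminus[n_1+1]$, the elements of $B_2\setminus[n_2+1]$, and the two full sets $[n_1+1]$ and $[n_2+1]$ themselves (each of which lies in $B$ but not as the ambient set $[n+1]$). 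Hence, writing $\R^{n+1} = \R^{n_1+1}\oplus\R^{n_2+1}$ with bases that partition the standard basis accordingly,
\[
P_B = \Bigl(\sum_{S\in B_1\setminus[n_1+1]}\conv(e_i\mid i\in S) + \Delta^{n_1}\Bigr) + \Bigl(\sum_{S\in B_2\setminus[n_2+1]}\conv(e_i\mid i\in S) + \Delta^{n_2}\Bigr),
\]
where $\Delta^{n_1} = \conv(e_i\mid i\in[n_1+1])$ and similarly $\Delta^{n_2}$, and the two parenthesized Minkowski sums live in complementary coordinate subspaces. Including the full-simplex summand is exactly what makes each bracketed sum equal to $P_{B_1}$ (respectively $P_{B_2}$) as realized in its own coordinate subspace — one should check against Definition~\ref{nest} whether the convention there already produces this, or whether the extra $\Delta^{n_i}$ is absorbed; in either reading the two bracketed polytopes are combinatorially $P_{B_1}$ and $P_{B_2}$.

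Next I would invoke the elementary fact that a Minkowski sum of two polytopes lying in complementary linear subspaces $V_1, V_2$ of $V_1\oplus V_2$ is their direct (Cartesian) product: if $Q_i\subset V_i$, then $Q_1 + Q_2 = Q_1\times Q_2$ under the identification $V_1\oplus V_2 \cong V_1\times V_2$, and this is a combinatorial equivalence — in fact an affine isomorphism — because every point of the sum has a unique decomposition and the face lattice of the sum is the product of the face lattices. Applying this with $V_1 = \R^{n_1+1}$, $V_2 = \R^{n_2+1}$, $Q_1 = P_{B_1}$, $Q_2 = P_{B_2}$ gives $P_B \cong P_{B_1}\times P_{B_2}$ as claimed. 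Since by the stated convention we regard combinatorially equivalent polytopes as equal, this yields the displayed equality.

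The only genuine subtlety — and the step I would be most careful about — is bookkeeping of the ``$\setminus[n+1]$'' in the Minkowski-sum formula: one must confirm that the single omitted summand for $P_B$ is the big set $[n_1+1]\cup[n_2+1] = [n+1]$, which however is \emph{not} an element of $B$ at all (connectedness of $B_1, B_2$ does not force it in, and the hypothesis only says $B$ is their union), so in fact nothing is omitted relative to $B_1\sqcup B_2$, and simultaneously the summands $[n_1+1], [n_2+1]$ \emph{are} present and correctly rebuild the two factor polytopes. Once this indexing is pinned down, the argument is the complementary-subspace lemma and nothing more; the rest is routine.
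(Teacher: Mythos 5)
Your argument is correct and is precisely the ``direct consequence of the definition'' that the paper asserts for this corollary without writing out a proof: the Minkowski sum in Definition~\ref{nest} splits over the two disjoint ground sets, and a Minkowski sum of polytopes lying in complementary coordinate subspaces is their Cartesian product. Your bookkeeping of the $\setminus [n+1]$ convention (nothing is omitted since $[n+1]=[n_1+1]\sqcup[n_2+1]\notin B$) and of the extra summands $\conv(e_i\mid i\in[n_j+1])$, which do not change the combinatorial type because $B_j$ is connected, addresses exactly the points that need checking.
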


\begin{exam}
1) Consider $B_{\Delta}=\{\{i\},[n+1]|1\leq i\leq n+1\}$. Then $P_{B_\Delta}=\Delta^n$;
2) Consider $B_{\Box}=\{\{1,\ldots,i\},\{i\}|1\leq i\leq n+1\}$. Then $P_{B_{\Box}}=I^n$. By Corollary~\ref{BuildSetProd} $\hat{B}=\{\{i\},\{2j-1,2j\}|\,1\leq i\leq n+1,1\leq j\leq n\}$ is also a building set for $I^n$. 
\end{exam}

\begin{rema}
The above examples show that there exist different building sets $B_{1}$ and $B_{2}$ such that $P_{B_1}=P_{B_2}$. 
\end{rema}

\begin{defi}\label{Family}
A \emph{family of polytopes} $\mathcal F$ is a set of polytopes such that for any $n\geq 0$ the subset $\mathcal F_n$ of its $n$-dimensional elements is nonempty and finite. A family $\mathcal F$ is called a \emph{line family} if each $\mathcal F_i$ consists of exactly one polytope.
\end{defi}

The following family of polytopes introduced by Carr and Devadoss~\cite{CD} in the framework of the theory of Coxeter complexes consists of flag nestohedra and, therefore, by a result of Buchstaber and Volodin~\cite{BV}, these polytopes can be realized as 2-truncated cubes.  

\begin{defi}\label{GA}
\textit{A graphical building set} $B(\Gamma)$ for a (simple) graph $\Gamma$ on the vertex set $[n+1]$ consists of such $S$ that the induced subgraph $\Gamma_{S}$ on the vertex set $S\subset [n+1]$ is a connected graph.\\
Then $P_{\Gamma}=P_{B(\Gamma)}$ is called a \emph{graph-associahedron}.
\end{defi}

\begin{exam}%{Examples}
The following families of graph-associahedra, which will play an important role in our work, are of particular interest in convex geometry, combinatorics and representation theory: 
\begin{itemize}
\item $\Gamma$ is a complete graph on $[n+1]$.\\
 Then $P_{\Gamma}=Pe^n$ is a \textit{permutohedron}, see Figure~\ref{permfig}.
\begin{figure}[h]
\includegraphics[scale=0.5]{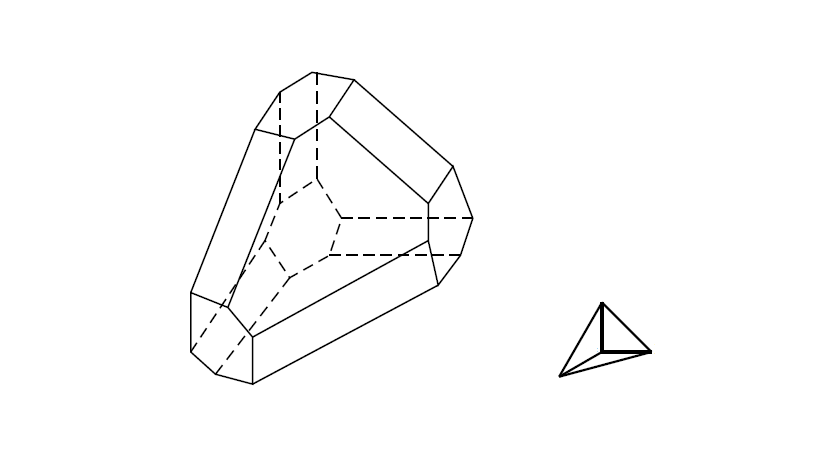}
\caption{3-dimensional permutohedron and the
corresponding graph.}\label{permfig}
\end{figure}

\item $\Gamma$ is a stellar graph on $[n+1]$.\\
 Then $P_{\Gamma}=St^n$ is a \textit{stellahedron}, see Figure~\ref{stelfig}.
\begin{figure}[h]
\includegraphics[scale=0.5]{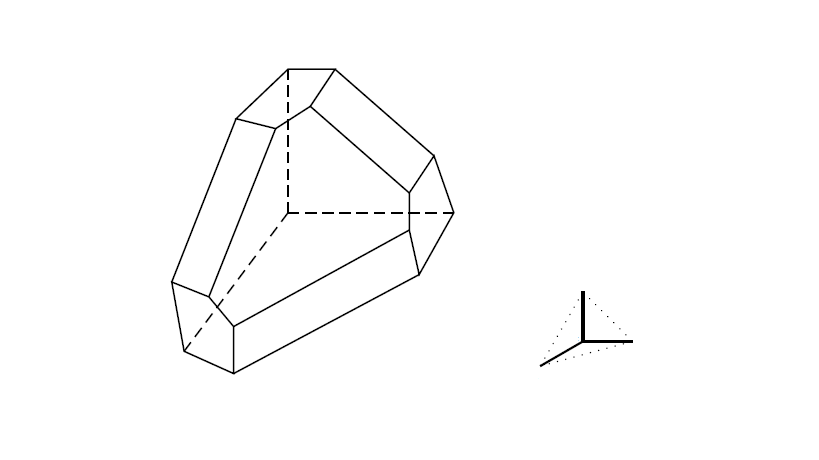}
\caption{3-dimensional stellahedron and the
corresponding graph.}\label{stelfig}
\end{figure}

\item $\Gamma$ is a cycle graph on $[n+1]$.\\
 Then $P_{\Gamma}=Cy^n$ is a \textit{cyclohedron} (or Bott-Taubes polytope~\cite{BT}), see Figure~\ref{cyclfig}.
\begin{figure}[h]
\includegraphics[scale=0.5]{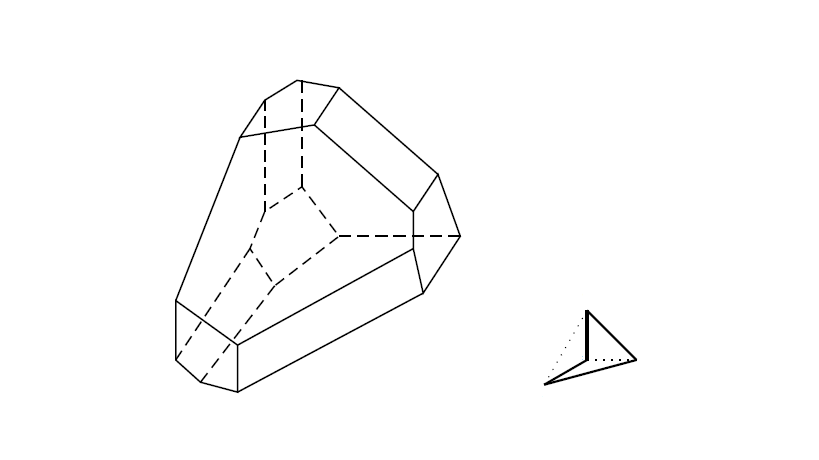}
\caption{3-dimensional cyclohedron and the
corresponding graph.}\label{cyclfig}
\end{figure}

\item $\Gamma$ is a chain graph on $[n+1]$.\\
 Then $P_{\Gamma}=As^n$ is an \textit{associahedron} (or Stasheff polytope~\cite{S}), see Figure~\ref{assfig}.
\begin{figure}[h]
\includegraphics[scale=0.5]{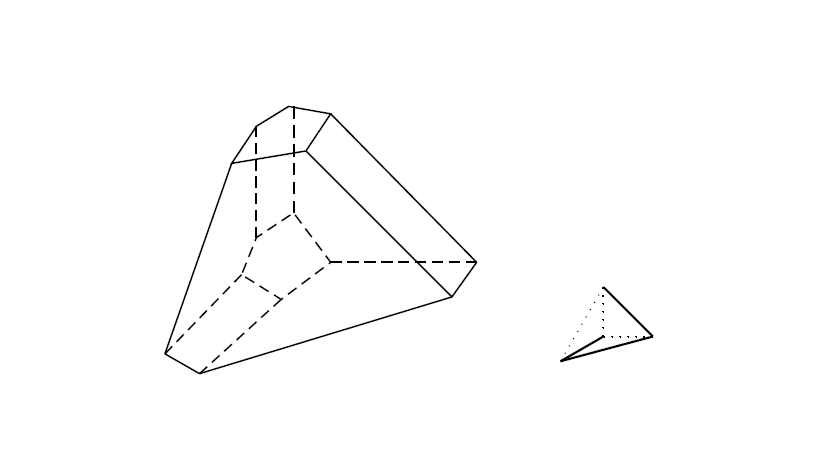}
\caption{3-dimensional associahedron and the
corresponding graph.}\label{assfig}
\end{figure}

\end{itemize}
\end{exam}

\begin{exam}
The families of simplices $\Delta=\{\Delta^n|\,n\geq 0\}$, cubes $\Box=\{I^n|\,n\geq 0\}$, associahedra $As=\{As^n|\,n\geq 0\}$, permutohedra $Pe=\{Pe^n|\,n\geq 0\}$, cyclohedra $Cy=\{Cy^n|\,n\geq 0\}$, and stellahedra $St=\{St^n|\,n\geq 0\}$ are line families.
\end{exam}

\section{Families and the differential ring of polytopes}

There have been obtained a number of results on remarkable families of polytopes, see~\cite{Post05, PRW06}. The theory of families of polytopes that uses the notion of combinatorics of a family was started in the work of Buchstaber~\cite{B}. The notion of a bigraded differential ring of polytopes, which we are going to introduce now, turned out to be of particular importance in this theory.

\begin{defi}
The {\emph{bigraded ring of polytopes}} is $\mathcal P=\oplus\mathcal P_{n}$, where $\mathcal P_{n}=\oplus\mathcal P_{n,k}$. Here $\mathcal P_{n,k}$ is a finitely generated free Abelian group, whose generators correspond to simple $n$-dimensional polytopes $P^n$ with $m$ facets and $k=m-n$. The two operations defined in $\mathcal P$ are: the addition corresponds to a disjoint union of polytopes, and the multiplication corresponds to a Cartesian product of polytopes. Obviously, zero element is an empty set $P^{-1}$ and the unit is a point $P^0$. Note that if $a\in\mathcal P_{n,k}$ and $b\in\mathcal P_{n',k'}$, then $ab\in\mathcal P_{n+n',k+k'}$.

This ring $\mathcal P$ is a differential ring up to the first grading with respect to the following operator $d:\mathcal P\rightarrow\mathcal P$: it maps a polytope to a disjoint union of its facets and thus $d^{2}(P^n)\neq 0$ if $n>1$. Note that $d:\,\mathcal P_{n}\to\mathcal P_{n-1}$.
\end{defi}

\begin{rema}
Similarly, one can also consider a ring $\mathcal{P}^O=\oplus\mathcal P^{O}_{n}, \mathcal P^{O}_{n}=\oplus\mathcal P^{O}_{n,k}$ generated by the set of all oriented convex polytopes. An operator $d^O$ is defined on this ring, which maps an oriented polytope $P$ to a disjoint union of its facets $\{F_i\}$ with orientations prescribed before, in such a way that $F_i$ is taken with ``$+$'' if its orientation coincides with that of $F_i$, induced by the orientation of $P$, and with ``$-$'' otherwise. 
In this case $(d^O)^2 = 0$ and 
\[
d^O(P_1\times P_2) = (d^OP_1)\times P_2 \cup (-1)^{\dim P_1}P_1\times d^OP_2.
\] 
\end{rema}

\begin{prob}
Compute the cohomology ring $H^*[\mathcal{P}^O, d^O]$.
\end{prob}

In this paper we work only with $(\mathcal P,d)$.  
To exclude families with evident combinatorics that are not interesting to us, we introduce the following notions.

\begin{defi}
We call a line family $\mathcal F$ \emph{n-reducible} if any $P\in\mathcal F_k$ with $k\geq n$ is a product of polytopes of positive dimensions. If $\mathcal F$ is not $n$-reducible for any $n>1$, then $\mathcal F$ will be refered to as \emph{irreducible}.
\end{defi}

Note that among the above examples of line families cubes are 2-reducible and other families are irreducible.

Among all convex polytopes $P$ simple polytopes are characterized by the following formula, which holds only for them:
$$
F(dP) = \frac{\partial}{\partial t} F(P),
$$
where $F(P) = \alpha^n+f_{n-1}\alpha^{n-1}t+\ldots+f_0t^n$.
Here $n=\dim P$ and $f_k$ denotes the number of $k$-dimensional faces.

In the theory of simple polytopes, alongside with $F$-polynomial it is convenient to use $H$-polynomials: $H(s,t)=F(s-t,t)$.
For any simple $P$ the classical Dehn-Sommerville theorem on the $f$-vector of $P$ is equivalent to the identity $H(\alpha,t)=H(t,\alpha)$.

For the ring $(\mathcal P,d)$ the following relation holds for the $H$-polynomial:
$$
H(dP)=\partial H(P),
$$
where $\partial=\frac{\partial}{\partial s} + \frac{\partial}{\partial t}$.
Thus, $\partial$ is a linear differential operator that maps symmetric polynomials to symmetric polynomials. Moreover, polynomials $F$ and $H$ provide us with homomorphisms of differential rings:
$$
F:\,\mathcal P\to\mathbb{Z}[\alpha,t]
$$
and
$$
H:\,\mathcal P\to\mathbb{Z}[s,t],
$$
which send $d$ to $\frac{\partial}{\partial t}$ in the case of $F$, and send $d$ to $\partial$ in the case of $H$.

Therefore, formulae for the values of the differential $d$ on families of polytopes transform to partial differential equations.
In a series of works~\cite{B, BK, BV2} it was proved that for several important families of polytopes these differential equations have general analytical solutions (see also~\cite[\S1.7, 1.8]{TT}). Note that~\cite{B} was motivated by the work of Buchstaber and Koritskaya~\cite{BK}, where it was shown that generating series of $H$-polynomials for the associahedra family $As$ satisfies the classical quasilinear E.Hopf equation.
 
Suppose $P=P_B$ is a nestohedron on a connected building set $B\subseteq 2^{[n+1]}$, $n\geq 2$ (see Definition~\ref{nest}). 

Then the following important result holds.

\begin{lemm}[\cite{FS}]\label{NestBound}
The formula for the boundary operator in the ring of polytopes holds:
$$
dP_{B}^n=\sum\limits_{S\in B\backslash [n+1]}\,P_{B|_{S}}\times P_{B/S},\eqno (2)
$$
where the \emph{restriction} building set $B|_{S}$ is defined as an induced connected building set on the vertex set $S\in B$ and the \emph{contraction} of $S$ from $B$ building set $B/S$ is defined as $\{T\subset [n+1]\backslash S\text{and}\,T\in B,\text{or}\;T\sqcup S\in B\}$.
\end{lemm}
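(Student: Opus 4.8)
The plan is to read off everything from the definition of the differential $d$: by construction $dP_B^n$ is the formal disjoint union of the facets of $P_B^n$, each recorded up to combinatorial equivalence, so the task reduces to enumerating the facets of $P_B$ and identifying each one with a product $P_{B|_S}\times P_{B/S}$. First I would use the Minkowski-sum presentation $P_B=\sum_{T\in B\setminus\{[n+1]\}}\conv(e_i\mid i\in T)$ together with the elementary fact that the face of a Minkowski sum on which a covector is maximal is the Minkowski sum of the corresponding faces of the summands. For $S\in B\setminus\{[n+1]\}$ let $v_S\in(\R^{n+1})^*$ be the covector with $(v_S)_i=0$ for $i\in S$ and $(v_S)_i=1$ for $i\notin S$. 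A direct computation then gives that the face $F_S$ of $P_B$ on which $v_S$ is maximal equals $\sum_{T\subseteq S}\conv(e_i\mid i\in T)+\sum_{T\not\subseteq S}\conv(e_i\mid i\in T\setminus S)$, with $T$ ranging over $B\setminus\{[n+1]\}$. The first summand lies in the coordinate subspace $\R^S$ and is visibly $P_{B|_S}$; the second lies in the complementary subspace $\R^{[n+1]\setminus S}$.

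Next I would identify the second summand with $P_{B/S}$: the assignment $T\mapsto T\setminus S$ carries $\{T\in B\setminus\{[n+1]\}\mid T\not\subseteq S\}$ onto $B/S$ — this is precisely the definition of the contraction, up to the top element $[n+1]\setminus S$ — and, since adjoining translates or dilates of weak Minkowski summands does not change the combinatorial type of a Minkowski sum, the repetitions caused by non-injectivity of $T\mapsto T\setminus S$ (and the possible appearance of the full simplex on $[n+1]\setminus S$) are harmless. Hence $F_S\cong P_{B|_S}\times P_{B/S}$, which has dimension $(|S|-1)+(n-|S|)=n-1$, so $F_S$ is genuinely a facet. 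It then remains to check that $S\mapsto F_S$ is a bijection onto the facet set. Injectivity follows because $F_S=F_{S'}$ forces the outer normal rays to agree, i.e. $v_S$ and $v_{S'}$ are positively proportional modulo the all-ones covector $\mathbf 1$; since $v_S\equiv-\mathbf 1_S\pmod{\mathbf 1}$, an elementary case analysis (using that members of a building set are nonempty) yields $S=S'$. Surjectivity follows from the nested-set description of the normal fan of a nestohedron in~\cite{FS} (equivalently, from Postnikov's formula in~\cite{Post05}), by which $P_B$ has exactly $|B|-1=|B\setminus\{[n+1]\}|$ facets, so the injection $S\mapsto F_S$ must be onto. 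Summing over facets then gives $dP_B^n=\sum_{S\in B\setminus\{[n+1]\}}P_{B|_S}\times P_{B/S}$, which is~(2).

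The one genuinely non-formal ingredient is the surjectivity step — that the rays of the normal fan of $P_B$ are exactly the classes $[-\mathbf 1_S]$ for $S\in B\setminus\{[n+1]\}$ and no others — and this is the point where the structure theory of building sets and nested set complexes is invoked via~\cite{FS,Post05}. Everything else is Minkowski-sum bookkeeping; the only mild subtlety is the non-injectivity of $T\mapsto T\setminus S$, which is invisible combinatorially because the extra coordinate simplices are weak summands of the factor $P_{B/S}$ already present.
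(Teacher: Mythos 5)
The paper offers no proof of this lemma at all --- it is imported verbatim from Feichtner--Sturmfels~\cite{FS} --- so your argument has to stand on its own. Its architecture (faces of a Minkowski sum are Minkowski sums of faces; the face maximizing $v_S\equiv-\mathbf 1_S$ splits into a factor in $\R^S$ and one in $\R^{[n+1]\setminus S}$; injectivity via the normal rays; surjectivity via the known facet count from~\cite{FS,Post05}) is exactly the standard proof, and most of it is sound.

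One step fails as written, however, and it traces back to the Minkowski presentation you start from. You take $P_B=\sum_{T\in B\setminus\{[n+1]\}}\conv(e_i\mid i\in T)$, omitting the top simplex, and then dismiss the resulting discrepancies --- the possible appearance of $\Delta_{[n+1]\setminus S}$ in the second factor (and, implicitly, of $\Delta_S$ in the first) --- on the grounds that these are weak Minkowski summands of the factors already present. That claim is false. For the path building set $B'=\{1,2,3,12,23,123\}$ the sum $\Delta_{12}+\Delta_{23}$ of the proper simplices is a parallelogram with four facets, while adjoining the top simplex $\Delta_{123}$ produces the pentagon $As^2$ with five; so the top simplex of a connected building set is in general \emph{not} a weak summand of the sum of the remaining ones, and adjoining it genuinely changes the combinatorial type. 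The same example shows that under the top-omitting convention the lemma itself is false (the right-hand side of~(2) has $|B'|-1=5$ terms for a quadrilateral), and the paper's own example $P_{B_\Delta}=\Delta^n$ only comes out correctly if the top simplex is included. The repair is simply to use $P_B=\sum_{T\in B}\conv(e_i\mid i\in T)$ over \emph{all} of $B$, which is the convention of~\cite{FS} and~\cite{Post05}: then the first factor is $\sum_{T\in B|_S}\Delta_T$ on the nose, the map $T\mapsto T\setminus S$ (for $T\not\subseteq S$) hits every element of $B/S$, including its maximum $[n+1]\setminus S$, with multiplicity at least one, and the only normalization needed is the harmless one that $m_U\Delta_U$ and $\Delta_U$ have the same normal fan. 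With that change your argument goes through; appealing to~\cite{FS,Post05} for the facet count in the surjectivity step is legitimate, since the lemma is attributed to~\cite{FS} in any case.
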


\begin{defi}
Suppose $\mathcal F$ is a family of polytopes. Then define $\mathcal P_\mathcal F=\oplus_{n\geq 0}\mathcal {P}_{F,n}$ to be a graded subring of $\mathcal P$ which is generated over $\mathbb Z$ by all $P^n\in F_{n}$. $\mathcal F$ is called a {\emph{differential family}} (or, for short, a {\emph{$d$-family}}) if 
$\mathcal P_{F}$ is a differential ring, i.e. $d:\,\mathcal P_{F,n}\rightarrow\mathcal P_{F,n-1}$.
A line family which is a $d$-family is called a {\emph{d-line family}}.
\end{defi}

Families of simplices, cubes, associahedra, permutohedra are $d$-families.
Moreover, they are also $d$-line families. However, in general, a (line) family is not necessarily a $d$-family. This motivates the following definition.

\begin{defi}
A \emph{$d$-closure} of $\mathcal F$ is a minimal extension of $\mathcal F$ in $\mathcal P$ to a $d$-family.
\end{defi}

The next statement follows directly from Lemma~\ref{NestBound}.

\begin{coro}\label{ClosurePolytopes}
A $d$-closure of any family of nestohedra contains all the nestohedra $P_{B_{n}|_S}$ and $P_{B_{n}/S}$ alongside with $P_{B_n}$ for all $S\in B_{n}\backslash [n+1]$. 
\end{coro}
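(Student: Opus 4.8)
The plan is to read the statement off from Lemma~\ref{NestBound}, the only preliminary observation being that $d$ is a derivation on $\mathcal P$. Indeed, the facets of a Cartesian product $P_{1}\times P_{2}$ are exactly the polytopes $F_{i}\times P_{2}$ and $P_{1}\times G_{j}$, where $F_{i}$ runs over the facets of $P_{1}$ and $G_{j}$ over the facets of $P_{2}$; hence in $\mathcal P$ one has $d(P_{1}\times P_{2})=(dP_{1})\times P_{2}+P_{1}\times(dP_{2})$. Consequently a family $\mathcal F'$ is a $d$-family as soon as $dP\in\mathcal P_{\mathcal F'}$ for every generator $P\in\mathcal F'$, and one obtains the $d$-closure $\widetilde{\mathcal F}$ of a family $\mathcal F=\{P_{B_{n}}^{n}\mid n\ge 0\}$ of nestohedra by the obvious saturation: keep adjoining to the family every indecomposable Cartesian factor of the polytopes occurring with nonzero coefficient in $d^{j}P_{B_{n}}$, $j\ge 1$, until this stabilizes — it does, since there are only finitely many building sets on at most $n+1$ vertices and all polytopes so produced have dimension $\le n$.

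Now apply Lemma~\ref{NestBound}. Since $\widetilde{\mathcal F}$ contains every $P_{B_{n}}$ and $\mathcal P_{\widetilde{\mathcal F}}$ is closed under $d$, it contains $dP_{B_{n}}=\sum_{S\in B_{n}\setminus[n+1]}P_{B_{n}|_{S}}\times P_{B_{n}/S}$. By Definition~\ref{nest} the set $B_{n}|_{S}$ is a connected building set on $S$, and $B_{n}/S$ is connected on $[n+1]\setminus S$ because $[n+1]\in B_{n}$ forces $[n+1]\setminus S\in B_{n}/S$; so every summand $P_{B_{n}|_{S}}\times P_{B_{n}/S}$ is a product of the two nestohedra $P_{B_{n}|_{S}}$ and $P_{B_{n}/S}$, of dimensions $|S|-1<n$ and $n-|S|<n$ respectively. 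Since $\mathcal P$ is a free abelian group on combinatorial types of simple polytopes and Cartesian factorization into indecomposables is unique, the subring $\mathcal P_{\widetilde{\mathcal F}}$ is spanned by those basis elements that are products of elements of $\widetilde{\mathcal F}$; hence each $P_{B_{n}|_{S}}\times P_{B_{n}/S}$ is such a product, and comparing indecomposable factors shows that every indecomposable factor of $P_{B_{n}|_{S}}$ and of $P_{B_{n}/S}$ already lies in $\widetilde{\mathcal F}$ by construction of the saturation. Therefore $\mathcal P_{\widetilde{\mathcal F}}$ contains each of the nestohedra $P_{B_{n}|_{S}}$ and $P_{B_{n}/S}$, which is the assertion. (Such a polytope, e.g.\ $P_{B_{n}|_{S}}$, may itself be a product of lower-dimensional polytopes — a cube is the standard example — but then all of its indecomposable factors have been adjoined in the saturation, so it still belongs to $\mathcal P_{\widetilde{\mathcal F}}$.)

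The argument is thus essentially formal, resting only on Lemma~\ref{NestBound} and the derivation property of $d$; no computation with $f$-vectors or $H$-polynomials is needed. The single point that deserves care — and the main potential obstacle — is the precise meaning of ``minimal extension to a $d$-family'' in the definition of the $d$-closure: a priori one could instead adjoin to $\mathcal F$ the \emph{products} $P_{B_{n}|_{S}}\times P_{B_{n}/S}$ without their individual factors, and then $\mathcal P_{\widetilde{\mathcal F}}$ need not contain $P_{B_{n}|_{S}}$ on its own. One resolves this either by fixing the convention that the $d$-closure is the saturation of the first paragraph (the natural choice, and the smallest $d$-family that is moreover closed under passing to Cartesian factors), or by reading the statement in the weaker form: every $d$-family containing $\mathcal F$ contains $dP_{B_{n}}$, hence the summands $P_{B_{n}|_{S}}\times P_{B_{n}/S}$, and the canonical minimal way to close up is to adjoin the nestohedra $P_{B_{n}|_{S}}$ and $P_{B_{n}/S}$ themselves. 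With either reading the corollary is immediate from Lemma~\ref{NestBound}.
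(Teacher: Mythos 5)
Your argument is correct and follows exactly the route the paper intends: the paper offers no proof beyond the remark that the corollary ``follows directly from Lemma~\ref{NestBound}'', and your elaboration (derivation property of $d$, appearance of each summand $P_{B_n|_S}\times P_{B_n/S}$ in $dP_{B_n}$, passage to the factors) is the right way to fill that in. Your closing caveat about what ``minimal extension to a $d$-family'' means is a fair observation about an imprecision in the paper's Definition of $d$-closure rather than a gap in your proof, and the convention you adopt is clearly the intended one.
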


\begin{exam}
It is known, see~\cite{TT}, that a $d$-closure of the family $Cy$ of cyclohedra is $As\cup Cy$ and a $d$-closure of the family $St$ of stellahedra is $St\cup Pe$.
\end{exam}

The above examples motivate the next definition.

\begin{defi}\label{complex}
We say that a family $\mathcal F$ has {\emph{complexity}} $k\geq 1$ if its $d$-closure is a union of $k$ line families.
\end{defi}

In particular, the above examples show that associahedra and permutohedra families both have complexity 1, cyclohedra and stellahedra families both have complexity 2.

One gets directly from  Definition~\ref{nest} that a combinatorial type of an $n$-dimensional nestohedron $P_{B_n}$ is determined by the structure of the building set $B_n$ on $[n+1]$. Thus, a family $\mathcal F$ of nestohedra is defined by a sequence of building sets $\{B_n, n\geq 1\}$.

Among the important problems of combinatorics of families there is the following one, see Problem IV above:

\begin{prob}
To describe and construct families of polytopes with finite complexity.
\end{prob}

In the introduction we mentioned a new line family of flag nestohedra $\mathcal P_{Mas}$ that will be determined in Definition~\ref{familyP} below. One of our main results is the construction of this family and that $\mathcal P_{Mas}$ has complexity 4, see Theorem~\ref{boundary}. 

In our nearest subsequent publication we will continue the study of the class of families of polytopes having the DFPM property. In particular, we will use Corollary~\ref{ClosurePolytopes} to solve the above problem in the case of nestohedra families. We will also construct new families of polytopes with finite complexity and with nontrivial Massey products.

\section{Two-parametric generating series for nestohedra families and P.D.E.}

To proceed we firstly recall a construction due to Erokhovets. This construction has already found a number of applications among which is~\cite[Proposition 1.5.23]{TT}. We will make use of this construction when studying nestohedra families with nontrivial Massey products.

\begin{constr}[{see~\cite[Construction 1.5.19]{TT}}]
Suppose $B$ is a connected building set on $[n+1]$ and $B_{i}$ is a connected building set on $[k_i]$ for $1\leq i\leq n+1$. Then a {\emph{substitution of building sets}} is a connected building set $B(B_{1},\ldots,B_{n+1})$ on $[k_{1}]\sqcup\ldots\sqcup [k_{n+1}]=[k_{1}+\ldots+k_{n+1}]$, consisting of elements $S_{i}\in B_{i}$ and $\sqcup_{i\in S}\,[k_i]$, where $S\in B$.
\end{constr}

Next we introduce a partially defined operation on the set of connected building sets.

\begin{constr}
Suppose $B_{1}$ and $B_{2}$ are connected building sets on $[n+1]$ and $B_{1}\cap B_{2}=B_{\Delta}$. If a subset $B_{1}\cup B_{2}$ of $2^{[n+1]}$ is a building set, we say that a {\emph{sum of building sets}} $B_{1}+B_{2}$ is defined and equals $B_{1}\cup B_{2}$ as set of subsets of $[n+1]$.
\end{constr}

\begin{prop}\label{sumbuildset}
Suppose $B_{1},B_{2}$ are connected building sets on $[n+1]$ and the following condition holds: for any $S_{i}\in B_{i},i=1,2$ with $S_{1}\cap S_{2}\neq\varnothing$ their union $S_{1}\cup S_{2}\in B_{1}\cup B_{2}$. Then $B_{1}+B_{2}$ is defined.
\end{prop}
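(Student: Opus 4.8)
The claim is that the hypothesis ``$S_1\cap S_2\neq\varnothing\implies S_1\cup S_2\in B_1\cup B_2$'' already forces $B_1\cup B_2$ to be a building set, so that the partially defined operation $B_1+B_2$ is defined. The plan is to verify the two axioms of Definition~\ref{nest} directly for the set $B:=B_1\cup B_2$ viewed as a subset of $2^{[n+1]}$. Axiom (1) is immediate: since $B_1\cap B_2=B_\Delta$ and $B_\Delta$ contains all singletons $\{i\}$, every singleton lies in $B$. So the entire content is in axiom (2), the union-closure property under nonempty intersection.

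First I would fix $S_1,S_2\in B$ with $S_1\cap S_2\neq\varnothing$ and split into cases according to which of $B_1,B_2$ each of $S_1,S_2$ belongs to. If both lie in $B_1$, then $S_1\cup S_2\in B_1\subseteq B$ because $B_1$ is itself a building set; symmetrically if both lie in $B_2$. The only remaining case is the ``mixed'' one, say $S_1\in B_1$ and $S_2\in B_2$ (up to relabeling), and this is precisely where the hypothesis of the proposition is invoked: it says exactly that in this situation $S_1\cup S_2\in B_1\cup B_2=B$. Hence axiom (2) holds in all cases, $B$ is a building set, and by definition of the sum operation $B_1+B_2$ is defined and equals $B_1\cup B_2$. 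A small bookkeeping point worth spelling out is that an element $S_i$ could a priori lie in both $B_1$ and $B_2$ (namely when it is in $B_\Delta$), but that causes no trouble: one simply assigns it to whichever piece is convenient and the same case analysis applies.

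There is essentially no analytic or combinatorial obstacle here — the statement is a direct unwinding of definitions — so the ``hard part'' is merely organizational: making sure the case distinction is exhaustive and that the mixed case is stated with enough care that the hypothesis applies verbatim (in particular noting that $S_1\cap S_2\neq\varnothing$ is the shared premise of both the proposition's hypothesis and axiom~(2), so no extra work is needed to match them up). One should also remark, for completeness, that the connectedness of $B_1+B_2$ is automatic: $[n+1]\in B_1\subseteq B$ since $B_1$ is connected, so $B$ is connected as well, and therefore $P_{B_1+B_2}$ is a genuine nestohedron. I would close by observing that this gives a convenient sufficient criterion, phrased purely in terms of pairwise unions of generators, for recognizing when two building sets can be combined — which is the form in which it will be used in the sequel when assembling the building sets defining $\mathcal P_{Mas}$.
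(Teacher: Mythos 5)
Your proof is correct: the paper actually states Proposition~\ref{sumbuildset} with no proof at all, treating it as immediate from Definition~\ref{nest}, and your three-way case split (both sets in $B_1$, both in $B_2$, or mixed — with the hypothesis invoked exactly in the mixed case) is precisely the argument being left implicit. The only point worth flagging is that the condition $B_1\cap B_2=B_\Delta$ is not restated in the proposition's hypothesis but is a standing assumption of the construction of the sum, and you correctly import it rather than attempting to derive it.
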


Let us consider the following sets of subsets in $[n+1]$ for $n\geq 2$:
$$
B_{1}(P,n)=\{\{i\}|\,1\leq i\leq n+1\},
$$
$$
B_{2}(P,n)=\{\{1,2,i_{1},\ldots,i_{k}\}|\,3\leq i_{1}<\ldots<i_{k}\leq n+1,0\leq k\leq n-1\},
$$
$$
B_{3}(P,n)=\{\{1,j_{1},\ldots,j_{p}\}|\,3\leq j_{1}<\ldots<j_{p}\leq n,1\leq p\leq n-2\}.
$$

\begin{prop}\label{Pbuildset}
The sets $B_{1}(P,n)\cup B_{2}(P,n)$ and $B_{1}(P,n)\cup B_{3}(P,n)\cup \{[n+1]\}$ are connected building sets on $[n+1]$ and their sum 
$$
B(P,n)=(B_{1}(P,n)\cup B_{2}(P,n))+(B_{1}(P,n)\cup B_{3}(P,n)\cup\{[n+1]\})
$$
is defined. Moreover, a nestohedron $P_{B(P,n)}$ is flag.
\end{prop}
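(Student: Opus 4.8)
I would work directly with the explicit shapes of the three pieces, writing $B_{1}=B_{1}(P,n)$ for the set of all singletons, $B_{2}=B_{2}(P,n)=\{\{1,2\}\cup A\mid A\subseteq\{3,\dots,n+1\}\}$ for the set of all subsets of $[n+1]$ containing $\{1,2\}$, and $B_{3}=B_{3}(P,n)=\{\{1\}\cup A\mid\varnothing\neq A\subseteq\{3,\dots,n\}\}$. First I would check that $\mathcal B'=B_{1}\cup B_{2}$ is a connected building set: condition~1) of Definition~\ref{nest} is immediate, connectedness holds since $[n+1]=\{1,2\}\cup\{3,\dots,n+1\}\in B_{2}$, and for condition~2) a union of two intersecting members is either absorbed by a singleton or still contains $\{1,2\}$, hence lies in $B_{2}$. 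The verification for $\mathcal B''=B_{1}\cup B_{3}\cup\{[n+1]\}$ is equally short: every member of $\mathcal B''$ other than the singletons and $[n+1]$ contains $1$ but avoids both $2$ and $n+1$, and this shape is preserved under unions of intersecting members, while unions involving $[n+1]$ give back $[n+1]$.

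For the second assertion I would first check that the construction of the sum of building sets applies, i.e. $\mathcal B'\cap\mathcal B''=B_{\Delta}$: the singletons and $[n+1]$ lie in both, and nothing else can, since every non-singleton of $\mathcal B'$ contains $2$ while every element of $\mathcal B''$ strictly below $[n+1]$ omits $2$. Then I would verify the hypothesis of Proposition~\ref{sumbuildset} with its two building sets taken to be $\mathcal B'$ and $\mathcal B''$: given $S'\in\mathcal B'$ and $S''\in\mathcal B''$ with $S'\cap S''\neq\varnothing$, their union lies in $\mathcal B'\cup\mathcal B''$ — trivially if either is a singleton or $S''=[n+1]$, and in the only remaining case $S'=\{1,2\}\cup A_{1}$ with $A_{1}\subseteq\{3,\dots,n+1\}$ and $S''=\{1\}\cup A_{2}$ with $A_{2}\subseteq\{3,\dots,n\}$ one has $S'\cup S''=\{1,2\}\cup(A_{1}\cup A_{2})\in B_{2}$. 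By Proposition~\ref{sumbuildset} the sum is defined, and $B(P,n)=\mathcal B'+\mathcal B''=\mathcal B'\cup\mathcal B''=B_{1}\cup B_{2}\cup B_{3}$.

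For flagness I would use the standard identification of the nerve $K_{P_{B}}$ of a nestohedron with the nested set complex of $B$, under which $K_{P_{B}}$ is flag precisely when $B$ contains no \emph{bad family}: pairwise disjoint $S_{1},\dots,S_{l}\in B\setminus\{[n+1]\}$ with $l\ge 3$, with $S_{1}\cup\dots\cup S_{l}\in B$, and with $S_{i}\cup S_{j}\notin B$ for all $i\neq j$ (this is the condition that the minimal non-faces of the nested set complex all have two elements). The observation that drives the argument is that every non-singleton of $B(P,n)$ contains $1$, so a pairwise disjoint family has at most one non-singleton member, hence at least two singleton members. If all $S_{i}=\{a_{i}\}$ are singletons with $\{a_{1},\dots,a_{l}\}\in B(P,n)$, then $1\in\{a_{1},\dots,a_{l}\}$ (no member of $B(P,n)$ of cardinality $\ge 2$ omits $1$), say $a_{1}=1$; but then $\{1,a_{j}\}\in B_{2}\cup B_{3}$ for every $j\ge 2$ unless $a_{j}=n+1$, and distinctness of the $a_{j}$ forces $l\le 2$, a contradiction. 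If exactly one $S_{i}$, say $S_{1}$, is a non-singleton, then adjoining to $S_{1}$ any singleton $\{a_{j}\}$ disjoint from it again produces a member of $B_{2}$ or $B_{3}$ — immediately when $S_{1}\in B_{2}$, and when $S_{1}=\{1\}\cup A_{1}\in B_{3}$ after first eliminating $a_{j}\in\{2\}\cup\{3,\dots,n\}$ and being left with $a_{j}=n+1$, which again forces $l\le 2$ — contradicting $S_{1}\cup S_{j}\notin B$. Hence no bad family exists and $P_{B(P,n)}$ is flag.

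I expect the flagness step to be the main obstacle: it requires pinning down the correct combinatorial criterion for a nestohedron to be flag (equivalently, describing the minimal non-faces of the nested set complex) and then carrying out the case analysis while carefully tracking the asymmetric roles of the elements $1$, $2$ and $n+1$ in $B_{1}$, $B_{2}$, $B_{3}$. By contrast, the building-set axioms and the "sum is defined" part are routine verifications once the three pieces are described as above.
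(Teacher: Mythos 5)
Your argument is correct, and on the first two assertions it follows essentially the paper's route: both proofs reduce the "sum is defined" claim to Proposition~\ref{sumbuildset} after observing that the two pieces intersect exactly in $B_{\Delta}$ (your explicit check that $S_{1}\cup S_{2}=\{1,2\}\cup(A_{1}\cup A_{2})\in B_{2}(P,n)$ for the one nontrivial case is in fact more careful than the paper's one-line justification). The genuine difference is in the flagness step. The paper invokes the standard decomposition criterion for nestohedra --- $P_{B}$ is flag if and only if every $S\in B$ with $|S|\ge 2$ splits as $S_{1}\sqcup S_{2}$ with $S_{1},S_{2}\in B$ (see~\cite{PRW06, BV}) --- and verifies it in one line, since every element of $B_{2}(P,n)\cup B_{3}(P,n)$ is a smaller element of the same kind with one singleton adjoined. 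You instead derive a flagness condition directly from the face description of the nested set complex (Proposition~\ref{Fposet}): a minimal non-face with at least three vertices must consist of pairwise disjoint, nonmaximal $S_{1},\dots,S_{l}$ whose total union lies in $B$ while no pairwise union does, and you rule such families out by the observation that every non-singleton of $B(P,n)$ contains the vertex $1$, so that in each subcase all but one of the remaining singletons would be forced to equal $\{n+1\}$, impossible for $l\ge 3$. Your case analysis is complete and the criterion you use is correctly justified from material already in the paper; what the paper's approach buys is brevity (a one-line check against a quotable criterion), while yours buys self-containedness at the cost of a longer argument.
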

\begin{proof}
The first statement follows from the definition of a building set. To prove the second one it sufficies to observe that $(B_{1}(P,n)\cup B_{2}(P,n))\cap(B_{1}(P,n)\cup B_{3}(P,n)\cup\{[n+1]\})=B_{\Delta}$ and, if $S_{1}\in B_{1}(P,n)\cup B_{2}(P,n)$, $S_{2}\in B_{1}(P,n)\cup B_{3}(P,n)\cup \{[n+1]\}$ with $S_{1}\cap S_{2}\neq\varnothing$, then $S_{1},S_{2}\in B_{\Delta}$. Therefore, by Proposition~\ref{sumbuildset} the second statement holds. It is easy to see that every element of $B_{2}(P,n)\cup B_{3}(P,n)$ can be represented as $S_{1}\sqcup S_{2}$ with $S_{1},S_{2}\in B(P,n)$, which finishes the proof. 
\end{proof}

\begin{defi}\label{familyP}
We denote by $\mathcal P_{Mas}=\{P^{n}_{Mas},n\geq 0\}$ a family of flag nestohedra such that $P^0_{Mas}$ is a point, $P^1_{Mas}$ is a segment and $P^{n}_{Mas}=P_{B(P,n)}$ for $n\geq 2$. 
\end{defi}

Note that $P^2_{Mas}$ is a square. The 3-dimensional flag nestohedron $P^3_{Mas}$ is shown in Figure~\ref{3dim2cube} (visible facets are labeled in bold).

\begin{figure}[h]
\includegraphics[scale=0.75]{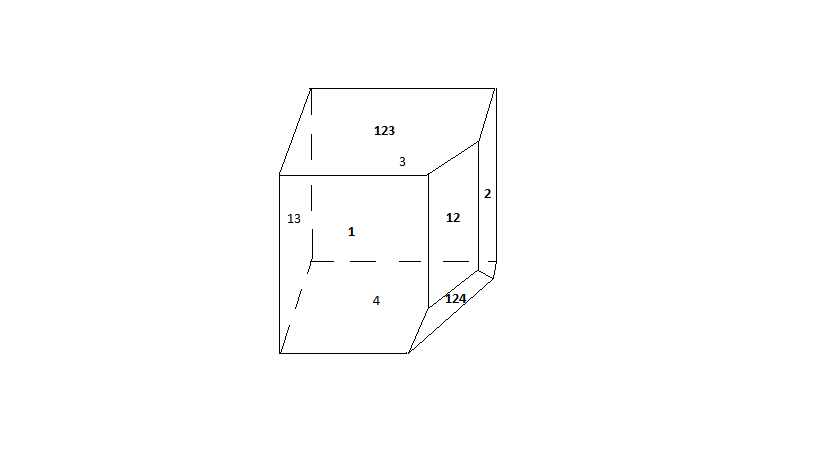}
\caption{The 3-dimensional polytope from the $\mathcal P_{Mas}$ family}
\label{3dim2cube}
\end{figure}

Now we are going to apply the theory of the differential ring of polytopes $(\mathcal P,d)$ to the family of flag nestohedra $\mathcal P_{Mas}$. Using Lemma~\ref{NestBound} for general nestohedra we compute the values of the boundary operator $d$ on polytopes from $\mathcal P_{Mas}$, as well as on flag nestohedra that arise in $dP$ for $P\in\mathcal P_{Mas}$. 

Let us consider four series of flag nestohedra: $\mathcal P_{Mas}$, permutohedra $Pe$, stellahedra $St$, and a certain family of graph-associahedra $\mathcal{P}_{\Gamma}$. Now we introduce the latter family.

Consider the following sets of subsets in $[n+1]$:
$$
B_{1}(\Gamma,n)=\{\{i\}|\,1\leq i\leq n+1\},
$$
$$
B_{2}(\Gamma,n)=\{\{i_{1},\ldots,i_{k}\}|\,1\leq i_{1}<\ldots<i_{k}\leq n,2\leq k\leq n\},
$$
$$
B_{3}(\Gamma,n)=\{\{1,j_{1},\ldots,j_{p},n+1\}|\,2\leq j_{1}<\ldots<j_{p}\leq n,0\leq p\leq n-1\}.
$$

\begin{prop}
The sets $B_{1}(\Gamma,n)\cup B_{2}(\Gamma,n)\cup \{[n+1]\}$ and $B_{1}(\Gamma,n)\cup B_{3}(\Gamma,n)$ are connected building sets on $[n+1]$ and their sum 
$$
B(\Gamma,n)=(B_{1}(\Gamma,n)\cup B_{2}(\Gamma,n)\cup \{[n+1]\})+(B_{1}(\Gamma,n)\cup B_{3}(\Gamma,n))
$$
is defined. Moreover, a nestohedron $P^{n}=P_{B(\Gamma,n)}$ is flag.
\end{prop}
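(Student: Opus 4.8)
The plan is to follow the scheme of the proof of Proposition~\ref{Pbuildset} step by step, adjusting the bookkeeping to the new building blocks $B_1(\Gamma,n)$, $B_2(\Gamma,n)$, $B_3(\Gamma,n)$. For the first statement one checks the two axioms of Definition~\ref{nest} directly for each of the two sets. For $B_1(\Gamma,n)\cup B_2(\Gamma,n)\cup\{[n+1]\}$: all singletons are present; given $S_1,S_2$ in the set with $S_1\cap S_2\neq\varnothing$, if either equals $[n+1]$ the union is $[n+1]$, if one is a singleton $\{i\}$ the union is the other element, and if both lie in $B_2(\Gamma,n)$ then $S_1\cup S_2\subseteq[n]$ still has at least two elements, hence lies in $B_2(\Gamma,n)$. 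For $B_1(\Gamma,n)\cup B_3(\Gamma,n)$: all singletons are present; any two elements of $B_3(\Gamma,n)$ already share $\{1,n+1\}$, and their union again contains both $1$ and $n+1$ and is contained in $[n+1]$, hence lies in $B_3(\Gamma,n)$; the mixed case with a singleton is immediate. Connectedness holds because $[n+1]$ belongs to the first set by construction and to the second as the element of $B_3(\Gamma,n)$ with $p=n-1$.

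For ``the sum is defined'' I would argue exactly as in Proposition~\ref{Pbuildset}, via Proposition~\ref{sumbuildset}. First, $\bigl(B_1(\Gamma,n)\cup B_2(\Gamma,n)\cup\{[n+1]\}\bigr)\cap\bigl(B_1(\Gamma,n)\cup B_3(\Gamma,n)\bigr)=B_\Delta$: the obvious common elements are the singletons and $[n+1]$, while an element of $B_2(\Gamma,n)$ other than a singleton lies in $[n]$ and therefore misses the vertex $n+1$, so it is not in $B_3(\Gamma,n)$, and an element of $B_3(\Gamma,n)$ other than $[n+1]$ contains $n+1$, so it is not in $B_2(\Gamma,n)\cup\{[n+1]\}$. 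Next I would verify the hypothesis of Proposition~\ref{sumbuildset}: for $S_1$ in the first set and $S_2$ in the second with $S_1\cap S_2\neq\varnothing$, the union $S_1\cup S_2$ lies in $B(\Gamma,n)$. If $S_1=[n+1]$, or one of $S_1,S_2$ is a singleton, this is trivial; the only substantive case is $S_1\in B_2(\Gamma,n)$, $S_2\in B_3(\Gamma,n)$, and then $S_1\cup S_2$ contains both $1$ and $n+1$ and is contained in $[n+1]$, hence is again of the form defining $B_3(\Gamma,n)$. This is the one place where the argument genuinely differs from that of Proposition~\ref{Pbuildset}: there, nonempty intersection forced both sets into $B_\Delta$, whereas here $B_2(\Gamma,n)$ and $B_3(\Gamma,n)$ can overlap, so one must check directly that the union falls back into $B_3(\Gamma,n)$. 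Applying Proposition~\ref{sumbuildset} then shows that $B(\Gamma,n)$ is a building set and the sum is defined.

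For flagness I see two routes. The direct one mirrors the end of the proof of Proposition~\ref{Pbuildset}: by the same criterion used there (see~\cite{BV}) it suffices to exhibit, for every $S\in B(\Gamma,n)$ with $|S|\ge 2$, a decomposition $S=S_1\sqcup S_2$ with $S_1,S_2\in B(\Gamma,n)$ nonempty. For $S\in B_2(\Gamma,n)$ one splits off any single vertex, $S=\{i\}\sqcup(S\setminus\{i\})$, the remainder being a subset of $[n]$ and hence a singleton or an element of $B_2(\Gamma,n)$; for $S\in B_3(\Gamma,n)$ (including $[n+1]$) one splits off $\{n+1\}$, and $S\setminus\{n+1\}\subseteq[n]$ is again a singleton or an element of $B_2(\Gamma,n)$. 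The second, cleaner route is to observe that $B(\Gamma,n)$ is exactly the graphical building set $B(\Gamma_n)$ of the graph $\Gamma_n$ on $[n+1]$ obtained from the complete graph on $\{1,\ldots,n\}$ by attaching a pendant edge $\{1,n+1\}$: a subset $S\subseteq[n+1]$ induces a connected subgraph of $\Gamma_n$ precisely when $S$ is a singleton, or $S\subseteq[n]$, or $\{1,n+1\}\subseteq S$; since every graph-associahedron is flag (\cite{CD}), $P^n=P_{\Gamma_n}$ is flag. The main obstacle, such as it is, is the union-closure verification in the ``sum is defined'' step, since the labour-saving observation available for $\mathcal P_{Mas}$ no longer applies; everything else is a routine case check, and the graph-theoretic identification disposes of flagness at once.
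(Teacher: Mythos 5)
Your proof is correct and follows essentially the same route as the paper, whose entire argument here is ``analogous to the proof of Proposition~\ref{Pbuildset}''; you simply carry out that analogy in detail, including the one place where the template genuinely needs adjusting (elements of $B_2(\Gamma,n)$ and $B_3(\Gamma,n)$ can intersect nontrivially, so the union must be checked to land in $B_3(\Gamma,n)$). Your alternative flagness argument via the identification of $B(\Gamma,n)$ with the graphical building set of $K_n$ plus a pendant edge is also sound and coincides with the observation the paper makes in the remark immediately following Definition~\ref{polytopesPG}.
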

\begin{proof}
Analogous to the proof of Proposition~\ref{Pbuildset}.
\end{proof}

\begin{defi}\label{polytopesPG}
We denote by $\mathcal P_{\Gamma}=\{P_{\Gamma}^{n},n\geq 0\}$ a family of flag nestohedra such that $P_{\Gamma}^0$ is a point, $P_{\Gamma}^1$ is a segment and $P_{\Gamma}^{n}=P_{B(\Gamma,n)}$ for $n\geq 2$.
\end{defi}

\begin{rema}
One can easily see that $B(\Gamma,n)$ coincides with a graphical building set $B(\Gamma_n)$, where $\Gamma_n$ is a simple graph on $n+1$ vertices which consists of a complete graph $K_n$ on $[n]$ and a segment which joins the vertices $\{n\}$ and $\{n+1\}$. 
\end{rema}

To compute the boundary operator on our families of flag nestohedra we use the following formulae for the boundary operator $d$ on $Pe$ and $St$ families, see~\cite{B}:
$$
dPe^n=\sum\limits_{s=0}^{n-1}\binom{n+1}{s+1}Pe^{s}\times Pe^{n-s-1},
$$
and
$$
dSt^n=nSt^{n-1}+\sum\limits_{s=0}^{n-1}\binom{n}{s}St^{s}\times Pe^{n-s-1}.
$$

\begin{lemm}\label{lemmboundary}
Operator $d$ on the graph-associahedra family $\mathcal P_{\Gamma}$ is given by the formula:
$$
dP^{n}_{\Gamma}=Pe^{n-1}+\sum\limits_{s=0}^{n-2}\binom{n-1}{s+1}Pe^{s}\times P^{n-s-1}_{\Gamma}+\sum\limits_{s=0}^{n-1}\binom{n-1}{s}Pe^{s}\times Pe^{n-s-1}+
$$
$$
+\sum\limits_{s=0}^{n-2}\binom{n-1}{s}P^{s+1}_{\Gamma}\times Pe^{n-s-2},
$$
In particular, $\mathcal P_{\Gamma}$ has complexity 2 (see Definition~\ref{complex}).
\end{lemm}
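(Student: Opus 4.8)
The plan is to compute $dP^n_\Gamma$ directly from Lemma~\ref{NestBound} applied to the building set $B(\Gamma,n)$, and then to identify the contraction and restriction nestohedra arising from each $S \in B(\Gamma,n)\setminus[n+1]$ with members of the four model families. First I would partition $B(\Gamma,n)\setminus[n+1]$ according to the description in Proposition~\ref{Pbuildset} (in the $\Gamma$-version): the singletons $B_1(\Gamma,n)$, the subsets of $[n]$ of size $\geq 2$ coming from $B_2(\Gamma,n)$, and the subsets containing both $1$ and $n+1$ coming from $B_3(\Gamma,n)$. For each such $S$ I need two facts: which combinatorial polytope $P_{B(\Gamma,n)|_S}$ is, and which one $P_{B(\Gamma,n)/S}$ is. Since $B(\Gamma,n) = B(\Gamma_n)$ is graphical (by the Remark following Definition~\ref{polytopesPG}), I can use the standard facts that for a graphical building set $B(\Gamma)|_S = B(\Gamma|_S)$ and $B(\Gamma)/S = B(\Gamma / S)$ where $\Gamma/S$ is the graph obtained by contracting the connected subgraph on $S$ to a point; this converts the whole computation into a bookkeeping exercise about induced and contracted subgraphs of $\Gamma_n = K_n \cup \{\text{edge }(n,n+1)\}$.

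Next I would run through the cases. An induced subgraph of $\Gamma_n$ on a set $S$ not containing $n+1$ is a complete graph, so its graph-associahedron is a permutohedron; if $S$ contains both $n$ and $n+1$ (and some of $[n-1]$) it is again of the same type $\Gamma_{|S|-1}$ as our model, giving a $P^\bullet_\Gamma$ factor; if $S$ contains $n+1$ but not $n$ then $\Gamma|_S$ is disconnected and contributes nothing. For the contractions: contracting a clique $S \subseteq [n]$ inside $\Gamma_n$ again yields a graph of the form $K_{\bullet}$ with a pendant edge, i.e. another $\Gamma_{\bullet}$; contracting a set containing $\{n,n+1\}$ yields a complete graph, i.e. a permutohedron; and contracting a set containing $n+1$ but not $n$ is excluded since such $S$ is not in the (connected) building set. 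Assembling all contributions, grouping by the size parameter $s$, and reading off the binomial coefficients (the number of ways to choose the remaining vertices of $S$ from the appropriate index set) should yield exactly the four sums in the statement: the isolated $Pe^{n-1}$ comes from $S=[n]$, the $\binom{n-1}{s+1}Pe^s\times P^{n-s-1}_\Gamma$ and $\binom{n-1}{s}P^{s+1}_\Gamma\times Pe^{n-s-2}$ sums come from the two ways a clique-type $S$ can interact with the pendant edge, and the $\binom{n-1}{s}Pe^s\times Pe^{n-s-1}$ sum from the $S$ containing both $n,n+1$.

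For the complexity claim I would invoke Corollary~\ref{ClosurePolytopes}: the $d$-closure of $\mathcal P_\Gamma$ consists of all $P_{B(\Gamma,n)|_S}$ and $P_{B(\Gamma,n)/S}$, which by the case analysis above are all either permutohedra or members of $\mathcal P_\Gamma$. Hence the $d$-closure is contained in $\mathcal P_\Gamma \cup Pe$; since $Pe$ is itself a $d$-family and $\mathcal P_\Gamma \cup Pe$ is $d$-closed (the $d$-closure of $Pe$ is $Pe$ by the formula for $dPe^n$ quoted above), the $d$-closure is exactly $\mathcal P_\Gamma \cup Pe$, a union of two line families, so the complexity is at most $2$; and it is not $1$ because $Pe \neq \mathcal P_\Gamma$ (e.g.\ already in dimension $2$, $P^2_\Gamma$ is a square while $Pe^2$ is a hexagon), so the complexity equals $2$.

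The main obstacle I expect is the combinatorial bookkeeping in the middle step: correctly tracking, for each type of $S$, both the restriction and the contraction simultaneously, and in particular making sure the pendant edge $(n,n+1)$ is handled consistently (whether $n \in S$, $n+1 \in S$, both, or neither), so that the binomial coefficients and the index ranges on $s$ come out precisely as written rather than off by one. Verifying the identification $B(\Gamma,n)/S \cong B(\Gamma_n/S)$ in the borderline cases where contraction could in principle create or destroy building-set elements is the delicate point, but it follows from the general compatibility of graphical building sets with graph contraction, which I would cite rather than reprove.
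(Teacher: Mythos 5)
Your overall strategy --- expand $dP_{B(\Gamma,n)}$ via Lemma~\ref{NestBound} and identify each restriction $B|_S$ and contraction $B/S$ through the graphical description $B(\Gamma,n)=B(\Gamma_n)$ --- is exactly the paper's, which simply tabulates these identifications and applies the boundary formula. But your explicit case analysis of the contractions is wrong in a way that would produce a different formula. For a clique $S\subseteq[n]$ the contraction $\Gamma_n/S$ is of type $\Gamma_{n-|S|}$ \emph{only} when $S$ avoids the attachment vertex $n$ of the pendant edge. When $n\in S$, for every remaining vertex $j$ the set $\{n+1,j\}\cup S$ induces a connected subgraph, so in $\Gamma_n/S$ the pendant vertex $n+1$ becomes adjacent to everything and the contraction is a complete graph; such $S$ contribute $Pe^{|S|-1}\times Pe^{n-|S|}$ with multiplicity $\binom{n-1}{|S|-1}$. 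These are precisely the terms of the third sum $\sum_{s=0}^{n-1}\binom{n-1}{s}Pe^{s}\times Pe^{n-s-1}$, which your analysis omits entirely, and removing them from the ``clique'' case is what turns the multiplicity $\binom{n}{s+1}$ your count would give into the correct $\binom{n-1}{s+1}$ in the first sum. Relatedly, you attribute that third sum to the sets $S\supseteq\{n,n+1\}$, but for such $S$ the \emph{restriction} is $\Gamma_{|S|-1}$, not a permutohedron; those sets produce the fourth sum $\sum\binom{n-1}{s}P^{s+1}_{\Gamma}\times Pe^{n-s-2}$. Finally, the singleton $S=\{n+1\}$ does lie in the building set and contributes $pt\times Pe^{n-1}$, which is the isolated term, while $S=[n]$ gives the $s=n-1$ summand of the third sum; your analysis excludes the former and misassigns the latter. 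You flagged this bookkeeping as the delicate point, but the cases as stated do not assemble to the claimed formula.

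The paper's proof separates exactly the subcases you conflated, listing $B(\Gamma,n)/\{1,i_1,\ldots,i_k\}=B(Pe^{n-k-1})$ versus $B(\Gamma,n)/\{j_1,\ldots,j_p\}=B(\Gamma,n-p)$ (in its labelling the pendant vertex $n+1$ is attached to the vertex $1$), together with $B(\Gamma,n)/\{n+1\}=B(Pe^{n-1})$ for the isolated term. Your argument for the complexity being exactly $2$ is sound and somewhat more explicit than the paper's bare assertion, except that $P^2_{\Gamma}$ is a pentagon (the building set $B(\Gamma,2)$ is graphical for a path on three vertices), not a square; the conclusion $\mathcal P_{\Gamma}\neq Pe$ still holds.
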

\begin{proof}
From the formula for the boundary of a nestohedron in Lemma~\ref{NestBound} it follows that in the case of the $\mathcal{P}_{\Gamma}$ family the following identities hold:
$$
B(\Gamma,n)/\{1,i_{1},\ldots,i_k,n+1\}=B(Pe^{n-k-2}), 2\leq i_{1}<\ldots<i_{k}\leq n,
$$
$$
B(\Gamma,n)/\{1,i_{1},\ldots,i_{k}\}=B(Pe^{n-k-1}), 2\leq i_{1}<\ldots<i_{k}\leq n,
$$
$$
B(\Gamma,n)/\{j_{1},\ldots,j_{p}\}=B(\Gamma,n-p), 2\leq j_{1}<\ldots<j_{p}\leq n
$$
and for the corresponding induced building sets one gets:
$$
B(\Gamma,n)|_{\{1,i_{1},\ldots,i_{k},n+1\}}=B(\Gamma,k+1),
$$
$$
B(\Gamma,n)|_{\{1,i_{1},\ldots,i_{k}\}}=B(Pe^k),
$$
$$
B(\Gamma,n)|_{\{j_{1},\ldots,j_{p}\}}=B(Pe^{p-1}).
$$
Finally, we note that by definition of contraction operation for building sets one has: $B(\Gamma,n)/\{n+1\}=B(Pe^{n-1})$.

Using the formula for the boundary of a nestohedron and the above formulae, one immediately obtains:
$$
dP^{n}_{\Gamma}=Pe^{n-1}+\sum\limits_{s=0}^{n-2}\binom{n-1}{s+1}Pe^{s}\times P^{n-s-1}_{\Gamma}+\sum\limits_{s=0}^{n-1}\binom{n-1}{s}Pe^{s}\times Pe^{n-s-1}+
$$
$$
+\sum\limits_{s=0}^{n-2}\binom{n-1}{s}P^{s+1}_{\Gamma}\times Pe^{n-s-2}.
$$
\end{proof}

\begin{theo}\label{boundary}
The boundary operator $d$ on $\mathcal P_{Mas}$ is given by the formula:
$$
dP^n_{Mas}=2St^{n-1}+(n-2)P^{n-1}_{Mas}+\sum\limits_{s=0}^{n-2}\binom{n-2}{s}St^{s}\times P^{n-s-1}_{\Gamma}+
$$
$$
+\sum\limits_{s=0}^{n-2}\binom{n-2}{s}St^{s+1}\times Pe^{n-s-2}+\sum\limits_{s=0}^{n-3}\binom{n-2}{s}P^{s+2}_{Mas}\times Pe^{n-s-3}.
$$
In particular, $\mathcal P_{Mas}$ has complexity 4 (see Definition~\ref{complex}).
\end{theo}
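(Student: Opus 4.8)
The plan is to read off $dP^n_{Mas}=dP_{B(P,n)}$ from the Feichtner--Sturmfels formula $(2)$ of Lemma~\ref{NestBound},
$$
dP_{B(P,n)}=\sum_{S\in B(P,n)\setminus[n+1]}P_{B(P,n)|_S}\times P_{B(P,n)/S},
$$
and then to identify every restriction $B(P,n)|_S$ and every contraction $B(P,n)/S$ (up to relabelling) as the building set of a stellahedron, a permutohedron, a polytope $P^k_{Mas}$, or a polytope $P^k_\Gamma$. Using the description $B(P,n)=B_1(P,n)\cup B_2(P,n)\cup B_3(P,n)\cup\{[n+1]\}$ from Proposition~\ref{Pbuildset}, I would split the proper elements $S$ into three pairwise disjoint classes: the singletons $\{i\}$, $i\in[n+1]$; the sets $S=\{1,2\}\sqcup T$ with $T\subseteq\{3,\ldots,n+1\}$ and $|T|=k\le n-2$ (the proper elements of $B_2(P,n)$); and the sets $S=\{1\}\sqcup T$ with $\varnothing\ne T\subseteq\{3,\ldots,n\}$ and $|T|=p\le n-2$ (the elements of $B_3(P,n)$).

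For a singleton the restriction is a point, so only the contraction contributes. A direct inspection of the defining relation $B/S=\{U\ :\ U\in B\text{ or }U\sqcup S\in B\}$ gives: $B(P,n)/\{1\}$ is the graphical building set of a complete graph with one pendant edge, i.e. $B(\Gamma,n-1)$, contributing $P^{n-1}_\Gamma$; $B(P,n)/\{2\}$ and $B(P,n)/\{n+1\}$ are each the stellahedral building set on the surviving $n$ vertices, contributing $2St^{n-1}$; and for an interior vertex $i\in\{3,\ldots,n\}$ the contraction $B(P,n)/\{i\}$ is combinatorially $B(P,n-1)$, contributing $(n-2)P^{n-1}_{Mas}$. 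For $S=\{1,2\}\sqcup T$ the crucial point is that every $U\subseteq[n+1]\setminus S$ satisfies $U\sqcup S\supseteq\{1,2\}$, hence $U\sqcup S\in B_2(P,n)$; therefore $B(P,n)/S$ is the set of all nonempty subsets of the $(n-1-k)$-element set $[n+1]\setminus S$, which is the permutohedral building set, so $P_{B(P,n)/S}=Pe^{n-2-k}$. The restriction $B(P,n)|_S$ is the stellahedral building set $B(St^{k+1})$ precisely when $n+1\notin T$, and is $B(P,k+1)$ when $n+1\in T$; there are $\binom{n-2}{k}$ choices of the first kind and $\binom{n-2}{k-1}$ of the second, and after reindexing these assemble into $\sum_{s=0}^{n-2}\binom{n-2}{s}St^{s+1}\times Pe^{n-s-2}$ and $\sum_{s=0}^{n-3}\binom{n-2}{s}P^{s+2}_{Mas}\times Pe^{n-s-3}$. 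Finally, for $S=\{1\}\sqcup T$ the restriction is again a stellahedral building set $B(St^p)$ with centre $1$, while the contraction is $B(\Gamma,n-1-p)$; summing over the $\binom{n-2}{p}$ choices of $T$ and adjoining the $\{1\}$-singleton term produces $\sum_{s=0}^{n-2}\binom{n-2}{s}St^s\times P^{n-s-1}_\Gamma$. Adding the three contributions gives the asserted formula.

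For the complexity statement, the formula just proved shows that the values of $d$ on $\mathcal P_{Mas}$ lie in $\mathcal P_{Mas}\cup St\cup Pe\cup\mathcal P_\Gamma$; combining this with Lemma~\ref{lemmboundary} (which keeps $\mathcal P_\Gamma$ inside $\mathcal P_\Gamma\cup Pe$), the recalled formulae for $dPe^n$ and $dSt^n$, and the known $d$-closures of $St$ (equal to $St\cup Pe$) and $Pe$ (equal to $Pe$), one sees that the $d$-closure of $\mathcal P_{Mas}$ is exactly $\mathcal P_{Mas}\cup St\cup Pe\cup\mathcal P_\Gamma$. These are four pairwise distinct line families (for instance $P^3_{Mas}$ is not a graph-associahedron, whereas $St^3$, $Pe^3$, $P^3_\Gamma$ are and have pairwise distinct facet numbers), so $\mathcal P_{Mas}$ has complexity $4$.

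I expect the main obstacle to be the clean recognition of the restriction and contraction building sets up to relabelling, and in particular keeping track of the asymmetry between the vertex $n+1$, which lies in $B_2(P,n)$ but not in $B_3(P,n)$, and the interior vertices $3,\ldots,n$: this asymmetry is precisely what distinguishes an $St$-factor from a $P_{Mas}$-factor among the restrictions, and it propagates through the contraction of any set containing $\{1,2\}$, collapsing the $B_2$-part to a full simplex of subsets and thereby producing a permutohedral factor. The subsequent binomial bookkeeping (notably the shift $k\mapsto s+1$ in the $P_{Mas}$-term) is then routine.
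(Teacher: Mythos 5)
Your proposal is correct and follows essentially the same route as the paper: the paper's proof consists precisely of listing the identifications $B(P,n)/\{1\}=B(\Gamma,n-1)$, $B(P,n)/\{2\}=B(P,n)/\{n+1\}=B(St^{n-1})$, $B(P,n)/\{i\}=B(P,n-1)$ for $3\le i\le n$, together with the restriction/contraction pairs for $\{1,2\}\sqcup S$, $\{1,2,n+1\}\sqcup S$ and $\{1\}\sqcup S$ with $S\subset\{3,\ldots,n\}$, and then invoking Lemma~\ref{NestBound}. Your identifications and binomial bookkeeping agree with the paper's list (and you supply the verifications the paper leaves as "easy to verify"), so nothing further is needed.
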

\begin{proof}
It is easy to verify that the following identities hold:
$$
B(P,n)/\{1\}=B(\Gamma,n-1), B(P,n)/\{2\}=B(P,n)/\{n+1\}=B(St^{n-1}), 
$$
$$
B(P,n)/\{3\}=\ldots=B(P,n)/\{n\}=B(P,n-1),
$$
$$
B(P,n)/\{1,2,S\}=B(Pe^{n-2-|S|}), B(P,n)|_{\{1,2,S\}}=B(St^{|S|+1}),
$$
$$
B(P,n)/\{1,2,n+1,S\}=B(Pe^{n-3-|S|}), B(P,n)|_{\{1,2,n+1,S\}}=B(P,|S|+2),
$$
$$
B(P,n)/\{1,S\}=B(\Gamma,n-1-|S|), B(P,n)|_{\{1,S\}}=B(St^{|S|}),
$$
for $S\subset\{3,4,\ldots,n\}$.

The result follows now by applying Lemma~\ref{NestBound}.
\end{proof}

To find generating series and the corresponding differential equations for our families of flag nestohedra set
$$
Pe(x)=\sum\limits_{s=0}^{\infty}Pe^{s}\frac{x^{s+1}}{(s+1)!},\quad St(x)=\sum\limits_{s=0}^{\infty}St^{s}\frac{x^s}{s!}
$$
and
$$
P_{\Gamma}(x)=\sum\limits_{s=0}^{\infty}P^{s+1}_{\Gamma}\frac{x^s}{s!},\quad
P_{Mas}(x)=\sum\limits_{s=0}^{\infty}P^{s+2}_{Mas}\frac{x^{s+2}}{s!}.
$$

From Theorem~\ref{boundary} we can now deduce the following equations for the generating series of the above families of flag nestohedra.
Firstly, recall that:
$$
dPe(x)=Pe^2(x),
$$
and
$$
dSt(x)=(x+Pe(x))St(x).
$$

In their studies of generating series for classical polynomials~\cite{B-Kh} Buchstaber and Kholodov used a fruitful idea of introducing generating series with structure sequences $\{a_n\}$. It allowed them to treat generating series for different classes of polynomials in a univeral way, changing the structure constants $a_n$.

We are going to apply a similar approach to the generating series of polytope families. For a generating series $Q(x)=\sum\limits_{n=0}^{\infty}a_{n}Q^{n}x^{n+n_0}$ set $dQ(x)=\sum\limits_{n=0}^{\infty}a_{n}d(Q^{n})x^{n+n_0}$. 

\begin{theo}\label{1paramGenSer}
The following identities take place:
$$
dP_{\Gamma}(x)=2Pe(x)P_{\Gamma}(x)+(1+\frac{d}{dx}Pe(x))\frac{d}{dx}Pe(x),
$$
$$
dP_{Mas}(x)=(x+Pe(x))P_{Mas}(x)+x^{2}(2\frac{d}{dx}St(x)+St(x)P_{\Gamma}(x)+\frac{d}{dx}St(x)\frac{d}{dx}Pe(x)).
$$
\end{theo}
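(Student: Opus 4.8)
The plan is to prove both identities by substituting the boundary formulas of Lemma~\ref{lemmboundary} and Theorem~\ref{boundary} into the defining series and reorganizing the sums. All the computations take place in the ring $\mathcal P[[x]]$ of formal power series over $\mathcal P$, so no appeal to the homomorphisms $F$ or $H$ is needed at this stage; the recalled formulas $dPe(x)=Pe^2(x)$ and $dSt(x)=(x+Pe(x))St(x)$ are special cases of the same calculus and I will use them only as a consistency check. The one tool used throughout is the binomial convolution rule: if $A(x)=\sum_{k}a_k\frac{x^k}{k!}$ and $B(x)=\sum_k b_k\frac{x^k}{k!}$, then $A(x)B(x)=\sum_m\bigl(\sum_s\binom ms a_sb_{m-s}\bigr)\frac{x^m}{m!}$, combined with the bookkeeping identities $\frac{d}{dx}Pe(x)=\sum_{s\geq0}Pe^s\frac{x^s}{s!}$, $\frac{d}{dx}St(x)=\sum_{s\geq0}St^{s+1}\frac{x^s}{s!}$, and $xP_{Mas}(x)=\sum_{k\geq2}P^k_{Mas}\frac{x^{k+1}}{(k-2)!}$, which record the effect on coefficients of differentiating in $x$ or multiplying by $x$. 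Concretely, in each case I would compare the coefficient of $x^N$ on the two sides, so the claim reduces to an identity of convolutions of polytopes that follows from Lemma~\ref{lemmboundary} (respectively Theorem~\ref{boundary}) together with the elementary identities $\frac{(m+1)!}{(s+1)!(m-s)!}=\binom{m+1}{s+1}$ and $\frac{m!}{s!(m-s)!}=\binom ms$.

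For the first identity, note $dP_\Gamma(x)=\sum_{n\geq1}d(P^n_\Gamma)\frac{x^{n-1}}{(n-1)!}$, so the coefficient of $x^N$ is $\frac1{N!}d(P^{N+1}_\Gamma)$. I would expand $d(P^{N+1}_\Gamma)$ by Lemma~\ref{lemmboundary} and treat its four terms separately. The leading term $Pe^N$ contributes $\frac{d}{dx}Pe(x)$; the term $\binom{N}{s+1}Pe^s\times P^{N-s}_\Gamma$, divided by $N!$, is exactly the coefficient of $x^N$ in $Pe(x)P_\Gamma(x)$ (the shift in $\binom{N}{s+1}$ is produced by the $\frac{x^{s+1}}{(s+1)!}$ normalization of $Pe(x)$); the term $\binom Ns Pe^s\times Pe^{N-s}$ gives the coefficient of $x^N$ in $\bigl(\frac{d}{dx}Pe(x)\bigr)^2$; and the term $\binom Ns P^{s+1}_\Gamma\times Pe^{N-s-1}$ gives $P_\Gamma(x)Pe(x)$. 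Summing and using commutativity yields $dP_\Gamma(x)=2Pe(x)P_\Gamma(x)+\frac{d}{dx}Pe(x)+\bigl(\frac{d}{dx}Pe(x)\bigr)^2$, which is the asserted formula after collecting the last two summands as $\bigl(1+\frac{d}{dx}Pe(x)\bigr)\frac{d}{dx}Pe(x)$.

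For the second identity, $dP_{Mas}(x)=\sum_{n\geq2}d(P^n_{Mas})\frac{x^n}{(n-2)!}$, so the coefficient of $x^N$ is $\frac1{(N-2)!}d(P^N_{Mas})$; I would expand this by Theorem~\ref{boundary} and again split the five terms. The term $2St^{N-1}$ gives $2x^2\frac{d}{dx}St(x)$; the term $(N-2)P^{N-1}_{Mas}$, using $(N-2)/(N-2)!=1/(N-3)!$, gives $xP_{Mas}(x)$; the convolution term $\binom{N-2}{s}St^s\times P^{N-s-1}_\Gamma$ gives $x^2St(x)P_\Gamma(x)$; the term $\binom{N-2}{s}St^{s+1}\times Pe^{N-s-2}$ is the convolution of $\frac{d}{dx}St(x)$ with $\frac{d}{dx}Pe(x)$, hence $x^2\frac{d}{dx}St(x)\frac{d}{dx}Pe(x)$; and the term $\binom{N-2}{s}P^{s+2}_{Mas}\times Pe^{N-s-3}$ gives $Pe(x)P_{Mas}(x)$. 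Adding the five contributions and grouping $xP_{Mas}(x)+Pe(x)P_{Mas}(x)=(x+Pe(x))P_{Mas}(x)$ and the remaining three terms under a common factor $x^2$ produces the stated equation.

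The only genuine difficulty I expect is the bookkeeping: matching each binomial coefficient, such as $\binom{N}{s+1}$ or $\binom{N-2}{s}$, against the correct pair of normalized series, and tracking which of the polytope series acquires an $x$-derivative (this is where the difference between $St^s$ and $St^{s+1}$, or between $Pe(x)$ and $\frac{d}{dx}Pe(x)$, is decisive). A secondary, purely clerical point is to confirm that the ragged summation bounds in Lemma~\ref{lemmboundary} and Theorem~\ref{boundary} and the low-dimensional generators (points and segments) agree with the series written over their natural index ranges; this amounts to verifying the first one or two coefficients ($n=1,2$) by hand, for instance $dP^2_{Mas}=4I$.
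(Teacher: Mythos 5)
Your proposal is correct and is exactly the paper's argument: the paper's proof consists of the single line ``Direct calculation using Lemma~\ref{lemmboundary} and Theorem~\ref{boundary},'' and your coefficient-by-coefficient matching of each boundary term with the corresponding product of normalized series (including the check $dP^2_{Mas}=4I$) is precisely that calculation carried out in full.
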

\begin{proof}
Direct calculation using Lemma~\ref{lemmboundary} and Theorem~\ref{boundary}.
\end{proof}

Now, following~\cite{BE}, we introduce another parameter and study 2-parametric generating series for our families of flag nestohedra.

\begin{defi}\label{2paramSer}
For a family $\mathcal P=\{P^{n}|\,n\geq 0\}$ of simple polytopes consider its generating series $P(x)=\sum\limits_{n=0}^{\infty}a_{n}P^{n}x^{n+n_0}$ and the following formal series ($P^{n}\in\mathcal P$):
$$
Q(P^{n};q)=\sum\limits_{k=0}^{n}(d^{k}P)\frac{q^k}{k!}.
$$
Then define the {\emph{two-parameter extension}} of the generating series of $\mathcal P$ to be:
$$
P(q,x)=\sum\limits_{n=0}^{\infty}a_{n}Q(P^{n};q)x^{n+n_0}.
$$
We have: $Q(P^{n};0)=P^{n}$ and $P(0,x)=P(x)$.
\end{defi}

Recall that the 2-parametric generating series $Pe(q,x)$ and $St(q,x)$ are solutions of the following Cauchy problems:
\begin{itemize}
\item[(1)] For the 2-parametric generating series of permutohedra family:
$$
\frac{\partial}{\partial q}Pe(q,x)=Pe^2(q,x), Pe(0,x)=Pe(x).
$$
The solution has the following form:
$$
Pe(q,x)=\frac{Pe(x)}{1-qPe(x)};
$$
\item[(2)] For the 2-parametric generating series of stellahedra family:
$$
\frac{\partial}{\partial q}St(q,x)=(x+Pe(q,x))St(q,x), St(0,x)=St(x).
$$
The solution has the following form:
$$
St(q,x)=St(x)\frac{e^{qx}}{1-qPe(x)}.
$$
\end{itemize}
 
\begin{theo}\label{2paramGenSer}
The following statements hold.
\begin{itemize}
\item[(1)] 2-parametric generating series $P_{\Gamma}(q,x)$ is a solution of the following Cauchy problem:
$$
\frac{\partial}{\partial q}P_{\Gamma}(q,x)=2Pe(q,x)P_{\Gamma}(q,x)+\frac{\partial}{\partial x}Pe(q,x)(1+\frac{\partial}{\partial x}Pe(q,x)), P_{\Gamma}(0,x)=P_{\Gamma}(x);
$$
\item[(2)] 2-parametric generating series $P_{Mas}(q,x)$ is a solution of the following Cauchy problem:
$$
\frac{\partial}{\partial q}P_{Mas}(q,x)=(x+Pe(q,x))P_{Mas}(q,x)+x^{2}[2\frac{\partial}{\partial x}St(q,x)+St(q,x)P_{\Gamma}(q,x)+
$$
$$
+\frac{\partial}{\partial x}St(q,x)\frac{\partial}{\partial x}Pe(q,x)], P_{Mas}(0,x)=P_{Mas}(x).
$$
\end{itemize}
\end{theo}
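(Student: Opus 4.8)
The plan is to reduce both statements to the one-parametric identities of Theorem~\ref{1paramGenSer} by showing that the passage $A(x)\mapsto A(q,x)$ from a generating series to its two-parameter extension is an algebra homomorphism that turns the coefficient-wise action of $d$ into $\partial/\partial q$. The first step I would take is to isolate the elementary fact that $d\colon\mathcal P\to\mathcal P$ is a derivation of the commutative ring $\mathcal P$: the facets of a product $P_1\times P_2$ are exactly the products $F_i\times P_2$ with $F_i$ a facet of $P_1$ together with the products $P_1\times G_j$ with $G_j$ a facet of $P_2$, so $d(P_1\times P_2)=(dP_1)\times P_2+P_1\times(dP_2)$. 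Since $d$ is moreover locally nilpotent ($d^{k}$ annihilates $\mathcal P_n$ whenever $k>n$, a point having no facets), the operator $e^{qd}=\sum_{k\ge 0}\frac{q^k}{k!}d^{k}$ is a well-defined ring homomorphism $\mathcal P\to\mathcal P[q]$, its multiplicativity $e^{qd}(ab)=e^{qd}(a)\,e^{qd}(b)$ following from the iterated Leibniz formula $d^{N}(ab)=\sum_{k}\binom Nk(d^{k}a)(d^{N-k}b)$.

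Next I would introduce the operator $\mathcal E\colon\mathcal P[[x]]\to\mathcal P[[q,x]]$ defined coefficient-wise by $\mathcal E\big(\sum_n c_n x^n\big)=\sum_n (e^{qd}c_n)x^n$ and record its three formal properties: it is a $\mathbb{Z}[[x]]$-algebra homomorphism; it commutes with $\partial/\partial x$; and $\partial_q\circ\mathcal E=\mathcal E\circ\widehat{d}$, where $\widehat{d}$ is the coefficient-wise action of $d$ on $\mathcal P[[x]]$ — which is precisely the convention $dQ(x)=\sum_n a_n d(Q^n)x^{n+n_0}$ used in Theorem~\ref{1paramGenSer} — the last property coming from $\partial_q e^{qd}c=d\,e^{qd}c=e^{qd}(dc)$. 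The point of this operator is that, directly from Definition~\ref{2paramSer}, for every generating series $A(x)=\sum_n a_n A^n x^{n+n_0}$ one has $\mathcal E(A(x))=\sum_n a_n Q(A^n;q)x^{n+n_0}=A(q,x)$; in particular $Pe(q,x)=\mathcal E(Pe(x))$, $St(q,x)=\mathcal E(St(x))$, $P_\Gamma(q,x)=\mathcal E(P_\Gamma(x))$ and $P_{Mas}(q,x)=\mathcal E(P_{Mas}(x))$, and the initial conditions $P_\Gamma(q,x)|_{q=0}=P_\Gamma(x)$, $P_{Mas}(q,x)|_{q=0}=P_{Mas}(x)$ are immediate from $Q(P;0)=P$.

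It would then remain only to apply $\mathcal E$ to the two identities of Theorem~\ref{1paramGenSer}. For part (1): $\partial_q P_\Gamma(q,x)=\partial_q\mathcal E(P_\Gamma(x))=\mathcal E(dP_\Gamma(x))=\mathcal E\big(2Pe(x)P_\Gamma(x)+(1+\tfrac{d}{dx}Pe(x))\tfrac{d}{dx}Pe(x)\big)$, and distributing $\mathcal E$ over the right-hand side by multiplicativity, $\mathbb{Z}[[x]]$-linearity, and commutation with $\partial/\partial x$ yields $2Pe(q,x)P_\Gamma(q,x)+(1+\partial_x Pe(q,x))\partial_x Pe(q,x)$, as required. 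Part (2) follows in the same way from the second identity of Theorem~\ref{1paramGenSer}, the multiplications by the inert variables $x$ and $x^{2}$ commuting with $\mathcal E$ since $x,x^2\in\mathbb{Z}[[x]]$. I expect the only point needing real care to be the formal bookkeeping for $\mathcal E$, and above all its multiplicativity — which is exactly why the Leibniz property of $d$ on $\mathcal P$ has to be recorded first; once that is settled the argument is purely formal, all of the combinatorics having already been absorbed into Theorem~\ref{boundary} and Theorem~\ref{1paramGenSer}.
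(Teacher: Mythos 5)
Your proposal is correct and follows the same route as the paper, whose proof is simply ``direct calculation using Theorem~\ref{1paramGenSer}'': you pass from the one-parametric identities to the two-parametric ones by observing that the two-parameter extension is the coefficient-wise application of $e^{qd}$ and that $\partial/\partial q$ then realizes the coefficient-wise $d$. The only substance you add beyond what the paper leaves implicit is the (correct) justification that $d$ is a derivation on $\mathcal P$, hence $e^{qd}$ is multiplicative, which is exactly the point needed to distribute the extension over the products in Theorem~\ref{1paramGenSer}.
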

\begin{proof}
Direct calculation using Theorem~\ref{1paramGenSer}. 
\end{proof}

We shall give a complete solution of the above Cauchy problems, analyze their behavior, and interpret the consequences of these solutions in terms of combinatorics of the underlying families in our nearest subsequent publication. 

%%%%%%%%%%%%%%%%%%%%%%%%%%%%%%%%%%%%%%%%%%%%%%%%%%%%%%%%%%%%%%%%%

\section{Flag nestohedra and Massey products}

We first recall the construction of a line family of 2-truncated cubes $\mathcal Q=\{Q^n|\,n\geq 0\}$, for which $\mathcal Z_{Q^{n}}$ has a strictly defined nontrivial $n$-fold Massey product in cohomology, $n\geq 2$.

\begin{defi}[\cite{L2}]\label{2truncMassey}
Set $Q^0$ to be a point and $Q^1\subset\R^1$ to be a segment $[0,1]$. Suppose $I^{n}=[0,1]^n, n\geq 2$ is an $n$-dimensional cube with facets $F_{1},\ldots,F_{2n}$, such that $F_{i},1\leq i\leq n$ contains the origin 0, $F_{i}$ and $F_{n+i}$ for $1\leq i\leq n$ are parallel. 
Then its face ring is the following one:
$$
\ko[I^n]=\ko[v_{1},\ldots,v_{n},v_{n+1},\ldots,v_{2n}]/I_{I^n},
$$
where $I_{I^n}=(v_{1}v_{n+1},\ldots,v_{n}v_{2n})$.

Consider the polynomial ring
$$
\mathbb{Z}[v_{1},\ldots,v_{2n},w_{k',n+k'+i'}|\,1\leq i'\leq n-2, 1\leq k'\leq n-i']
$$
and the following square free monomial ideal
$$
I=(v_{k}v_{n+k+i},w_{k',n+k'+i'}v_{n+k'+l},w_{k',n+k'+i'}v_{p},w_{k',n+k'+i'}w_{k'',n+k''+i''}),
$$
in the above ring, where $v_{j}$ corresponds to $F_{j}$ for $1\leq j\leq 2n$, and 
$$
0\leq i\leq n-2, 1\leq k\leq n-i, 1\leq i',i''\leq n-2, 1\leq k'\leq n-i', 
$$
$$
1\leq k''\leq n-i'', 1\leq p\neq k'\leq k'+i', 0\leq l\neq i'\leq n-2, 
$$
$$
k'+i'=k''\,\text{or }k''+i''=k'.
$$

Let us define $Q^n\subset\R^n$ to be a simple polytope such that $I_{Q^n}=I$. Observe that $Q^n$ has a natural realization as a 2-truncated cube and, furthermore, its combinatorial type does not depend on the order of face truncations of $I^n$.
\end{defi}

The next result on higher Massey products in cohomology of moment-angle manifolds holds.

\begin{theo}[{\cite{L2}}]\label{mainMassey}
Let $\alpha_i\in H^{3}(\mathcal Z_{Q^n})$ be represented by a 3-cocycle $v_{i}u_{n+i}\in R^{-1,4}(Q^n)$ for $1\leq i\leq n$ and $n\geq 2$. Then all Massey products of consecutive elements from $\alpha_{1},\ldots,\alpha_{n}$ are defined and the whole $n$-product $\langle\alpha_{1},\ldots,\alpha_{n}\rangle$ is nontrivial.
\end{theo}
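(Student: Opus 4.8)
The plan is to realize the polytope $Q^n$ explicitly as an iterated $2$-truncation of the cube $I^n$, following Definition~\ref{2truncMassey}, and then to compute the relevant portion of the differential graded algebra $R(Q^n)$ of Theorem~\ref{BPtheo}(II) directly from the Stanley--Reisner ideal $I_{Q^n}=I$. The cohomology classes $\alpha_i=[v_iu_{n+i}]$ live in bidegree $(-1,4)$, so $\dim\alpha_i=3$; the point is that the product $\alpha_i\alpha_{i+1}$ involves the monomial $v_iu_{n+i}v_{i+1}u_{n+i+1}$, which lies in the Stanley--Reisner ideal precisely because the extra variables $w_{k',n+k'+i'}$ were introduced (by cutting off the faces $F_{k'}\cap\cdots$) to kill it up to a coboundary. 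Concretely, one checks that $v_iv_{i+1}=d(w_{i,n+i+1})$ modulo $I$ — this is the defining relation that makes the construction work — so $\alpha_i\alpha_{i+1}=0$ in cohomology and each pairwise product admits an explicit cochain primitive built from a single $w$-variable times the appropriate $u$'s.

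Next I would build the defining system (Definition~\ref{DefiningSystem}) for $\langle\alpha_1,\ldots,\alpha_n\rangle$ inductively along the ``staircase'' of the $w$-variables: the entry $c_{i,i+1}$ is the representing cocycle $v_iu_{n+i}$, and the entries $c_{i,j}$ for $j>i+1$ are products of consecutive $w$-variables $w_{i,n+i+1}w_{i+1,n+i+2}\cdots$ (with the appropriate $u$'s attached) whose existence and the required relation $a\cdot E_{1,k+1}=dC-\bar C\cdot C$ follow from the two families of quadratic relations $w_{k',n+k'+i'}w_{k'',n+k''+i''}\in I$ when $k'+i'=k''$ and $w_{k',n+k'+i'}v_{p}\in I$, $w_{k',n+k'+i'}v_{n+k'+l}\in I$. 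These are exactly the monomials listed in the ideal $I$, so the verification that $C$ is a defining system is a finite bookkeeping check that every entry of $dC-\bar C\cdot C$ off the $(1,n+1)$ corner vanishes modulo $I$. This shows the Massey product is \emph{defined}; in fact one should verify it is \emph{strictly defined}, i.e. all subproducts $\langle\alpha_i,\ldots,\alpha_j\rangle$ vanish, which comes for free from the same staircase structure.

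For nontriviality — the genuinely hard step — I would use the last isomorphism in Theorem~\ref{BPtheo}(I), $H^*(\mathcal Z_{Q^n})\cong\bigoplus_{J}\widetilde H^*((Q^n)_J)$, to isolate the multigraded component in which $a(C)$ sits: the total monomial degree forces $J$ to be a specific subset of $[2n]$ (essentially $\{1,\ldots,n\}\cup\{n+1,\ldots,2n\}$, paired up), and in that component the relevant reduced cohomology $\widetilde H^*((Q^n)_J)$ is one-dimensional, so the class $[a(C)]$ is either zero or the generator. To rule out zero one must show that \emph{no} defining system produces a coboundary: the indeterminacy of the Massey product is the subspace $\alpha_1\cdot H^{\ge}+H^{\ge}\cdot\alpha_n$, and one checks in the multigrading that this subspace is zero in the ambient component — equivalently, that the relevant $\widetilde H^*$ of the full subcomplex $(Q^n)_J$ is not hit by any cup product $\alpha_1\cdot\beta$ or $\beta\cdot\alpha_n$. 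This is where the precise combinatorics of the truncations encoded in $I$ is essential: the full subcomplex on $J$ is, up to homotopy, a wedge-like space whose top cohomology detects exactly the ``linked chain'' of the $\alpha_i$ and is not decomposable. The main obstacle, then, is not writing down the defining system but certifying the triviality of the indeterminacy in the correct multidegree; I expect this to reduce to an explicit computation of $\widetilde H^*$ of one full subcomplex of $K_{Q^n}$, carried out either via a Mayer--Vietoris / nerve argument on the facets $F_j$, $j\in J$, or by an explicit collapse of the Taylor-type resolution of $\ko[Q^n]$ in that multigrading.
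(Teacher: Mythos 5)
First, a point of comparison: the paper itself gives no proof of Theorem~\ref{mainMassey} --- it is imported verbatim from~\cite{L2} --- so your sketch can only be measured against the argument in that reference. Your overall architecture (read the relevant part of $R(Q^n)$ off the Stanley--Reisner ideal $I$, build a ``staircase'' defining system from the $w$-variables, and certify nontriviality inside a single multigraded component of the decomposition in Theorem~\ref{BPtheo}) is the right one and is essentially the strategy of the cited proof. However, your first concrete step is wrong as written: the identity $v_iv_{i+1}=d(w_{i,n+i+1})$ is impossible on degree grounds, since in $R(Q^n)$ one has $dv_j=0$ for every polynomial generator (including the $w$'s) and only $du_j=v_j$ is nonzero, so no purely polynomial element is a coboundary. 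The cocycle representing $\alpha_i\alpha_{i+1}$ is $\pm v_iv_{i+1}u_{n+i}u_{n+i+1}$ (note $v_iv_{i+1}\notin I$, so it is not killed by the ideal), and its primitive must be an element of bidegree $(-1,8)$ built from $w_{i,n+i+1}$ \emph{together with} exterior generators $u$; checking that its differential equals the product is exactly where the monomials $w_{k',n+k'+i'}v_{p}$ and $w_{k',n+k'+i'}v_{n+k'+l}$ of $I$ enter. This is repairable, but as stated the base of your inductive defining system is not justified. Likewise, strict definedness does not ``come for free'': for subproducts of length at least four one must show the subproduct \emph{sets} equal $\{0\}$, not merely that some defining system exists.

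The genuine gap is in the nontriviality step. The indeterminacy formula you invoke, $\alpha_1\cdot H+H\cdot\alpha_n$, is valid only for triple Massey products; for $k$-fold products with $k\ge4$ the set $\langle\alpha_1,\ldots,\alpha_k\rangle$ is not a coset of that subgroup (in general it is not a coset of any subgroup), so verifying that $\alpha_1\cdot H+H\cdot\alpha_n$ avoids the generator does not establish $0\notin\langle\alpha_1,\ldots,\alpha_n\rangle$. What is actually required --- and what the proof in~\cite{L2} carries out --- is to fix the multidegree $2J$ of $a(C)$ (here $J$ is forced to be the $2n$ vertices $\{1,\ldots,2n\}$, with the $w$-vertices excluded), observe that every entry $c_{i,j}$ of \emph{any} defining system is constrained by multigrading to lie in an explicitly listable finite set of monomials, and then check that for every such choice the corner class $[a(C)]$ equals the generator of the one-dimensional group $\widetilde H^{*}\bigl((Q^n)_J\bigr)$ rather than zero. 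Your sketch gestures at ``no defining system produces a coboundary'' but then substitutes the triple-product indeterminacy computation for it; for $n\ge4$ that substitution is precisely the step that fails, and it is the heart of the theorem.
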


\begin{prop}\label{Qdfpnm}
The family $\mathcal Q$ is a special geometric direct family with nontrivial Massey products.
\end{prop}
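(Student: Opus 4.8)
The statement splits into two claims. That $\mathcal Q$ is a geometric direct family with nontrivial Massey products is exactly \cite[Theorem 3.2]{B-L}, recorded above; since the orders of the Massey products produced there tend to infinity with $n$, the condition of Definition~\ref{DFPM} is met, so the only thing left to verify is that $\mathcal Q$ is \emph{special}, i.e. that for every $n\ge 2$ and every $2\le k\le n$ the manifold $\mathcal Z_{Q^n}$ carries a strictly defined nontrivial $k$-fold Massey product of three-dimensional classes.

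The plan is to manufacture these products on the bottom member $Q^k$ and transport them upward along the direct system. By Definition~\ref{2truncMassey} the facets of $Q^k$ come in parallel pairs $(F_i,F_{k+i})$, $1\le i\le k$; put $\alpha_i^k=[v_iu_{k+i}]\in H^3(\mathcal Z_{Q^k})$. Applying Theorem~\ref{mainMassey} with $n$ replaced by $k$ shows that $\langle\alpha_1^k,\ldots,\alpha_k^k\rangle$ is defined and nontrivial in $H^*(\mathcal Z_{Q^k})$ (for $k=2$ this is just the cup product $\alpha_1^2\alpha_2^2$, a generator of $H^6(\mathcal Z_{Q^2})=H^6(S^3\times S^3)$, as $Q^2=I^2$). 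Since $\mathcal Q$ is a GDFP, $Q^k$ is a face of $Q^n$ for $n>k$, and by the property recorded right after Definition~\ref{ADFP} together with \cite[Corollary 2.7]{B-L} there are classes $\alpha_i^n\in H^3(\mathcal Z_{Q^n})$ with $(j_k^n)^*(\alpha_i^n)=\alpha_i^k$, a defined product $\langle\alpha_1^n,\ldots,\alpha_k^n\rangle$, and $(j_k^n)^*\langle\alpha_1^n,\ldots,\alpha_k^n\rangle=\langle\alpha_1^k,\ldots,\alpha_k^k\rangle$. One must check here that, under the identification of $K_{Q^k}$ with a full subcomplex of $K_{Q^n}$ from Definition~\ref{ADFP}(2), the cocycle $v_iu_{n+i}$ restricts to $v_iu_{k+i}$; this is immediate from the description of $Q^n$ as an iterated $2$-truncation of the cube in Definition~\ref{2truncMassey}. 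As $(j_k^n)^*$ is a ring homomorphism carrying the $k$-fold product onto a nonzero class, $\langle\alpha_1^n,\ldots,\alpha_k^n\rangle\ne 0$ in $H^*(\mathcal Z_{Q^n})$.

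For strict definedness I would use the $\mathbb N\oplus\mathbb Z^m$-multigrading of Theorem~\ref{BPtheo}(II). The classes $\alpha_i^n$ are multihomogeneous, lying in $\Tor^{-1,2\{i,n+i\}}$, so $\langle\alpha_1^n,\ldots,\alpha_k^n\rangle$ is concentrated in the component $J=\{1,\ldots,k\}\cup\{n+1,\ldots,n+k\}$ and hence, by the degree count of Definition~\ref{DefiningSystem}, in $\widetilde H^1(P_J)$. Tracking the multidegree of each generator of the indeterminacy — obtained by altering a non-prescribed entry $c_{i,j}$ of a multihomogeneous defining system by a cocycle, equivalently a lower Massey product of a proper consecutive subfamily cupped with a complementary class — one finds it is forced into $\widetilde H^0(P_{J'})$ for a subset $J'\subsetneq J$ that is a union of at least two of the pairs $\{t,n+t\}$ (the block never shrinks to a single pair, since the entries $c_{i,i+1}=a_i$ are fixed). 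Such a $P_{J'}$ is connected in $Q^n$: as for $Q^k\subset Q^n$, this reduces to the fact that adjacent cube facets $F_t,F_{t+1}$ still meet after the $2$-truncations, i.e. $v_tv_{t+1}\notin I_{Q^n}$. Hence $\widetilde H^0(P_{J'})=0$ and the indeterminacy vanishes; for $k=2$ strictness is automatic. All classes above have dimension $3$, the extra claimed condition.

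The main obstacle I anticipate is precisely this strictness step: one must organize the enumeration of the indeterminacy contributions of a higher Massey product by the multigrading and confirm in each case that the relevant reduced cohomology of a subcomplex of $K_{Q^n}$ vanishes — essentially the argument of \cite{L2} for the top product, now run uniformly over all $2\le k\le n$. By contrast, the transport of nontriviality along the direct system and the GDFP/DFPM assertions are routine given \cite[Theorem 3.2]{B-L} and Theorem~\ref{mainMassey}.
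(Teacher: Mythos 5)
Your overall plan coincides with the paper's, which disposes of the proposition in three citations: Theorem~\ref{mainMassey} for the nontrivial $n$-fold product, \cite[Theorem 3.2]{B-L} for the (algebraic, hence geometric) direct-family structure, and \cite[Corollary 3.3]{B-L} for specialness. You invoke the first two and then attempt to reconstruct the third by hand, by producing the $k$-fold product on $Q^k$ and transporting it into $\mathcal Z_{Q^n}$ along a full-subcomplex embedding. That transport scheme is exactly the right one, and your reduction of strict definedness to the vanishing of $\widetilde H^0(P_{J'})$ for unions $J'$ of at least two consecutive pairs is the standard multigraded argument.

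The step that fails is your explicit identification of the full subcomplex. The induced subcomplex of $K_{Q^n}$ on $J=\{1,\ldots,k\}\cup\{n+1,\ldots,n+k\}$ is \emph{not} isomorphic to $K_{Q^k}$ under $n+j\mapsto k+j$, and the assertion that ``$v_iu_{n+i}$ restricts to $v_iu_{k+i}$ \ldots\ is immediate'' is false. For $k\ge 3$ the vertex counts already disagree, since $K_{Q^k}$ has the additional $w$-vertices coming from the truncations. For $k=2$ the counts agree but the complexes do not: in $K_{Q^n}$ with $n\ge 3$ the pair $\{1,n+2\}$ is a non-edge (it is the generator $v_kv_{n+k+i}$ with $k=1$, $i=1$ of $I_{Q^n}$), so $(K_{Q^n})_{\{1,2,n+1,n+2\}}$ is a path on four vertices, whereas $K_{Q^2}=K_{I^2}$ is a $4$-cycle; hence $\widetilde H^1$ of that full subcomplex vanishes and the cup product $\alpha_1^n\alpha_2^n$ is actually \emph{zero} (as it must be, for the higher products of Theorem~\ref{mainMassey} to be defined). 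The same discrepancy persists for all $k<n$, e.g.\ $v_1v_{n+k}\in I_{Q^n}$ while $v_1v_{2k}\notin I_{Q^k}$. So your candidate classes do not give a nontrivial $k$-fold product for $k<n$, and the nontriviality step collapses. The cure is not cosmetic: one must use the vertex set actually supplied by \cite[Theorem 3.2]{B-L} (which involves the $w$-vertices and is not the ``first $k$ pairs''), or, for $k=2$, different classes altogether; the existence of such a subcomplex and the resulting specialness is precisely the content of \cite[Corollary 3.3]{B-L}, which is what the paper cites.
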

\begin{proof}
Due to Theorem~\ref{mainMassey} above it suffices to prove that $\mathcal Q$ is an algebraic direct family, see Definition~\ref{ADFP}. The latter follows from~\cite[Theorem 3.2]{B-L}. The family $\mathcal Q$ is special due to~\cite[Corollary 3.3]{B-L}.
\end{proof}

We are going to prove that $\mathcal P_{Mas}$ is a line geometric direct family of flag nestohedra with nontrivial Massey products.
The notation for this family is justified by means of the following lemma.

\begin{lemm}\label{FlagNestMassey}
For any $P^{n}_{Mas}\in\mathcal P_{Mas}$ with $n\geq 2$ there exists a strictly defined and nontrivial $n$-fold Massey product in $H^*(\mathcal Z_{P^n_{Mas}})$.
\end{lemm}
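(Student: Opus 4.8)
The plan is to reduce the claim for $P^n_{Mas}$ to the already-established Theorem~\ref{mainMassey} for the $2$-truncated cube $Q^n$, by exhibiting $Q^n$ as a face of $P^n_{Mas}$ (equivalently, a full subcomplex on the nerve side) and then transporting the nontrivial Massey product through the split ring epimorphism induced by the face inclusion. Concretely, recall that $\mathcal P_{Mas}$ consists of flag nestohedra, hence $2$-truncated cubes by the Buchstaber--Volodin criterion, and that the face ring $\ko[P^n_{Mas}]$ is obtained from $\ko[I^n]$ by a sequence of codimension-$2$ truncations. The first step is to analyze the building set $B(P,n)$ from Proposition~\ref{Pbuildset} and read off the Stanley--Reisner relations of $K_{P^n_{Mas}}$ explicitly; then I would match a subset $J\subset[m(n)]$ of facets so that the full subcomplex $(K_{P^n_{Mas}})_J$ is combinatorially the nerve $K_{Q^n}$ with the relations listed in Definition~\ref{2truncMassey}. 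This is the combinatorial heart of the argument: identifying the facets $v_i, v_{n+i}$ (with $v_iv_{n+i}=0$) and the "truncation" facets $w_{k',n+k'+i'}$ inside the facet set of $P^n_{Mas}$.

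Once the full subcomplex identification is in hand, the second step is formal. By Theorem~\ref{BPtheo}(I), $H^*(\mathcal Z_{P^n_{Mas}})\cong\bigoplus_{I\subset[m]}\widetilde H^*((P^n_{Mas})_I)$, and restriction to a full subcomplex $(K_{P^n_{Mas}})_J\cong K_{Q^n}$ induces a ring epimorphism $(j^n)^*\colon H^*(\mathcal Z_{P^n_{Mas}})\to H^*(\mathcal Z_{Q^n})$ which is split; this is \cite[Corollary 2.7]{B-L}, already quoted in the excerpt. The cocycles $v_iu_{n+i}\in R^{-1,4}(Q^n)$ of Theorem~\ref{mainMassey} lift canonically (via the section) to cocycles $v_iu_{n+i}\in R^{-1,4}(P^n_{Mas})$ representing classes $\alpha_i\in H^3(\mathcal Z_{P^n_{Mas}})$. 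By the property recorded after Definition~\ref{DFPM} (the immediate consequence of \cite[Corollary 2.7]{B-L} together with Theorem~\ref{BPtheo}), a defining system for $\langle\alpha_1,\dots,\alpha_n\rangle$ in $H^*(\mathcal Z_{P^n_{Mas}})$ exists whose image under $(j^n)^*$ is a defining system for $\langle\alpha_1,\dots,\alpha_n\rangle$ in $H^*(\mathcal Z_{Q^n})$, and the $n$-fold product maps onto the latter. Since $(j^n)^*$ is surjective and the target product is nontrivial by Theorem~\ref{mainMassey}, the product upstairs must be nontrivial as well.

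For strictness, one must additionally check that all lower-order subproducts $\langle\alpha_i,\dots,\alpha_j\rangle$ with $\{i,\dots,j\}\subsetneq\{1,\dots,n\}$ vanish in $H^*(\mathcal Z_{P^n_{Mas}})$; again this can be arranged by the same full-subcomplex restriction, choosing $J'\subset J$ corresponding to a consecutive block, so that these subproducts live in (and vanish in) a smaller $\mathcal Z_{Q^{j-i+1}}$-summand as in \cite[Corollary 3.3]{B-L}. Alternatively, since $P^n_{Mas}$ is a nestohedron, one can use the multigrading in Theorem~\ref{BPtheo}(II) to localize each subproduct to the full subcomplex on the relevant facet block and invoke the strictness already proved for $\mathcal Q$.

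\textbf{Main obstacle.} The delicate point is Step 1: verifying that the prescribed facet subset $J$ really does cut out $K_{Q^n}$ as a full subcomplex of $K_{P^n_{Mas}}$, i.e. that every minimal non-face of $K_{Q^n}$ (the pairs $v_kv_{n+k+i}$, the relations involving the $w$'s, etc.) appears among the minimal non-faces of $K_{P^n_{Mas}}$ restricted to $J$, and no extra relations appear. This amounts to a careful bookkeeping with the building set $B(P,n)=(B_1\cup B_2)+(B_1\cup B_3)$: one translates nested-set combinatorics of $B(P,n)$ into the Stanley--Reisner presentation, then exhibits the explicit correspondence of variables. Everything after that is a routine invocation of the machinery from \cite{B-L} and Theorem~\ref{mainMassey}.
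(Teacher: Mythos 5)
Your overall strategy --- reduce to Theorem~\ref{mainMassey} for $Q^n$ via a full-subcomplex identification and then transport the Massey product through the split ring epimorphism coming from \cite[Corollary 2.7]{B-L} and Theorem~\ref{BPtheo} --- is exactly the paper's strategy, and your second and third paragraphs reproduce the formal part of its argument correctly. The gap is Step~1, which you rightly call the heart of the matter but do not carry out, and which as stated aims at the wrong (and much harder) target. You propose to find $J\subset[m(n)]$ with $(K_{P^n_{Mas}})_J\cong K_{Q^n}$, i.e.\ to embed \emph{all} of $K_{Q^n}$, including the vertices corresponding to the truncation facets $w_{k',n+k'+i'}$, as a full subcomplex. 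The paper never asserts this, and it is far from clear that it holds: the adjacencies of the $w$-vertices in $K_{Q^n}$ are quite rigid, whereas the extra facets of $P^n_{Mas}$ correspond to the elements $\{1,2\}\cup S$, $\{1,2,n+1\}\cup S$, $\{1\}\cup S$ of $B(P,n)$, and you give no matching. (Your parenthetical ``exhibiting $Q^n$ as a face of $P^n_{Mas}$'' cannot be meant literally either, since the two polytopes have the same dimension and are not combinatorially equivalent for $n\geq 4$.)

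What the paper actually proves is weaker and suffices. Realizing $P^n_{Mas}$ as a 2-truncated cube by the procedure of \cite{BV}, it identifies the $2n$ facets of the underlying cube with the elements $\{i\}$, $1\leq i\leq n+1$, and $\{1,\dots,k\}$, $2\leq k\leq n$, of $B(P,n)$, and checks --- using the description of non-edges in Proposition~\ref{Fposet}, namely that two disjoint elements span a non-edge iff their union lies in $B$, together with the explicit structure of $B(P,n)$ --- that the full subcomplex of $K_{P^n_{Mas}}$ on these $2n$ vertices and the full subcomplex of $K_{Q^n}$ on the $2n$ cube-facet vertices $[2n]$ have the same one-skeleton; flagness of both complexes then gives the isomorphism of these two $2n$-vertex full subcomplexes. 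That is all one needs: the classes $\alpha_i=[v_iu_{n+i}]$ and all their (sub)products are supported, in the multigrading of Theorem~\ref{BPtheo}(II), on subsets of those $2n$ vertices, so definedness, nontriviality and strictness pass from $\mathcal Z_{Q^n}$ down to the common full subcomplex and then up to $\mathcal Z_{P^n_{Mas}}$. Until you either carry out your stronger identification or weaken it to this $2n$-vertex subcomplex and actually verify the edge comparison against $B(P,n)$, the proof is incomplete.
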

\begin{proof}
We first obtain $P^{n}_{Mas}\in\mathcal P_{Mas}$ with $n\geq 2$ as a 2-truncated cube using the iterative procedure of codimension 2 face cuts off from $I^n$ given in~\cite{BV}. In the notation of Definition~\ref{2truncMassey} we identify $F_{i}$ with $\{1,\ldots,i\}$ for $1\leq i\leq n$ and identify $F_{i}$ with $\{i-n+1\}$ for $n+1\leq i\leq 2n$. Then we consecutively cut the following faces from $I^n$:
$$
\{1\}\sqcup\{3\}, \{1,2\}\sqcup\{4\},\ldots,\{1,\ldots,n-1\}\sqcup\{n+1\}\\
$$
$$
\cdots\\
$$
$$
\{1\}\sqcup\{n\}, \{1,2\}\sqcup\{n+1\}.
$$
in order, which is opposite to the inclusion. It is easy to see that $B=B(P,n)$ is a union of the connected building set $B_0$ of $I^n$, the set $B_{1}$ of above subsets of $[n+1]$, and all the subsets of $[n+1]$ which are the unions of nontrivially intersecting elements in $B$, that is, if $S_{1},S_{2}\in B$ and $S_{1}\cap S_{2}\neq\varnothing,S_{1},S_{2}$, then one should have: $S_{1}\cup S_{2}\in B$ by definition of a building set. One has: $S=S_{1}\sqcup S_{2}\in B(P,n)$ with $S_{1},S_{2}\in B_0$ if and only if $S\in B_1$ and, therefore, the full subcomplexes: on the vertex set $[2n]$ in $K_{Q^n}=\partial Q^*$ and on the vertex set $\{\{i\},\{1,2,\ldots,k\}|1\leq i\leq n+1, 2\leq k\leq n\}$ in $K_{P^n_{Mas}}$ are isomorphic. To finish the proof it suffices to apply Theorem~\ref{mainMassey} and Theorem~\ref{BPtheo}.
\end{proof}

\begin{rema}
It is easy to see that $Q^n$ and $P^n_{Mas}$ are combinatorially equivalent if and only if $n\leq 3$; for any $n\geq 1$ one has: $f_{0}(P^n_{Mas})=3\times 2^{n-2}+n-1>f_{0}(Q^n)=\frac{n(n+3)}{2}-1$. Moreover, for any $n>3$ $Q^n$ is a 2-truncated cube, but not a flag nestohedron. Note also that for any $n>2$ $P^n_{Mas}\in\mathcal P_{Mas}$ is a flag nestohedron, but not a graph-associahedron.
\end{rema}

%%%%%%%%%%%%%%%%%%%%%%%%%%%%%%%%%%%%%%%%%%%%%%%%%%%%%%%%%%

\begin{defi}(\cite{FS})
For a building set $B$ the nerve complex $K_P=\partial P^*$ for $P=P_B$ is called a {\emph{nested set complex}} and is denoted by $N_B$.
\end{defi}

Face poset structure of a nested set complex $N_B$ can be described in terms of the structure of the building set $B$ as follows.
 
\begin{prop}[{\cite[Theorem 1.5.13]{TT}}]\label{Fposet}
Vertices of $N_{B}$ are in one-to-one correspondence with nonmaximal elements in $B$.\\ 
Moreover, a set of vertices corresponding to such elements $S=\{S_{i_1},\ldots, S_{i_k}\}$ forms a simplex in $N_B$ if and only if:
\begin{itemize}
\item[(1)] For any two elements $S_{i_p},S_{i_q}$ with $1\leq p,q\leq k$, either $S_{i_p}\cap S_{i_q}=\varnothing$, or $S_{i_p}$, or $S_{i_q}$;

\item[(2)] If elements of a subset $\{S_{i_{t_1}},\ldots,S_{i_{t_l}}\}\subseteq S,l\geq 2$ are pairwisely disjoint, then $S_{i_{t_1}}\sqcup\ldots\sqcup S_{i_{t_l}}\notin B$.
\end{itemize}
\end{prop}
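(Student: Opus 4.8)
The first assertion I would read off from Lemma~\ref{NestBound}: the operator $d$ records exactly the facets of $P_B$, so the formula $dP^{n}_{B}=\sum_{S\in B\setminus[n+1]}P_{B|_{S}}\times P_{B/S}$ exhibits $S\mapsto F_{S}$ as a bijection between the nonmaximal elements of $B$ and the facets of $P_{B}$, and the vertices of $N_{B}=\partial P_{B}^{*}$ are by definition the facets of $P_{B}$. For the characterization of simplices I would use that $P_{B}$ is a simple polytope, so that a collection of facets $F_{S_{i_{1}}},\dots,F_{S_{i_{k}}}$ spans a simplex of $N_{B}$ if and only if $F_{S_{i_{1}}}\cap\dots\cap F_{S_{i_{k}}}\neq\varnothing$ (and then this intersection has codimension $k$); thus the whole problem becomes: decide when several facets of $P_{B}$ have a common point.

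The plan is an induction on $n=\dim P_{B}$ built on a two-facet analysis. Call two distinct nonmaximal $S,S'\in B$ \emph{conflicting} if they are incomparable (neither contained in the other) and $S\cup S'\in B$; equivalently, either $S\cap S'\neq\varnothing$ with neither containing the other, or $S\cap S'=\varnothing$ with $S\sqcup S'\in B$. I would first show that $F_{S}\cap F_{S'}=\varnothing$ precisely when $S,S'$ conflict, and that when they do not, $F_{S}\cap F_{S'}$ is, under the identification $F_{S}\cong P_{B|_{S}}\times P_{B/S}$ of Lemma~\ref{NestBound}, one whole factor times a facet of the other: the facet $F_{S'}$ of $P_{B|_{S}}$ if $S'\subsetneq S$; the facet $F_{S'}$ of $P_{B/S}$ if $S\cap S'=\varnothing$ and $S\sqcup S'\notin B$; and the facet $F_{S'\setminus S}$ of $P_{B/S}$ if $S\subsetneq S'$ (in each case one checks, using that $B$ is connected, that the displayed set is a nonmaximal element of the relevant restriction or contraction building set). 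The disjointness half is where the geometry enters: using the standard support-function description of the nestohedron, each facet is $F_{T}=\{x\in P_{B}\mid\langle x,\mathbf{1}_{T}\rangle=\gamma(T)\}$ with $\gamma(T)=\min_{P_{B}}\langle x,\mathbf{1}_{T}\rangle=\#\{U\in B\mid U\subseteq T\}$ a submodular function of $T$ (and $\langle x,\mathbf{1}_{[n+1]}\rangle$ constant on $P_{B}$), and a point $x\in F_{S}\cap F_{S'}$ would give $\gamma(S)+\gamma(S')=\langle x,\mathbf{1}_{S\cup S'}\rangle+\langle x,\mathbf{1}_{S\cap S'}\rangle\geq\gamma(S\cup S')+\gamma(S\cap S')$, forcing equality in the submodularity inequality $\gamma(S)+\gamma(S')\leq\gamma(S\cup S')+\gamma(S\cap S')$; but when $S,S'$ conflict this inequality is strict (take $U=S\cup S'\in B$ in the counting formula for $\gamma$), a contradiction. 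Granting the two-facet analysis, the inductive step is: given $\{S_{i_{1}},\dots,S_{i_{k}}\}$ with $k\geq2$, intersect everything with $F_{S_{i_{1}}}\cong P_{B|_{S_{i_{1}}}}\times P_{B/S_{i_{1}}}$; the total intersection is nonempty iff no $S_{i_{j}}$ conflicts with $S_{i_{1}}$ and the images of the remaining $S_{i_{j}}$ (namely $S_{i_{j}}$ in $B|_{S_{i_{1}}}$ if $S_{i_{j}}\subsetneq S_{i_{1}}$, and $S_{i_{j}}$ or $S_{i_{j}}\setminus S_{i_{1}}$ in $B/S_{i_{1}}$ according as $S_{i_{j}}$ is disjoint from or contains $S_{i_{1}}$) span simplices of $N_{B|_{S_{i_{1}}}}$ and of $N_{B/S_{i_{1}}}$ respectively; now apply the inductive hypothesis to these two lower-dimensional nestohedra.

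The main obstacle will be the final bookkeeping: one must verify that conditions (1)--(2) for the whole family in $B$ are equivalent to the conjunction of ``$S_{i_{1}}$ conflicts with no $S_{i_{j}}$'' and conditions (1)--(2) for the two image subfamilies, read inside $B|_{S_{i_{1}}}$ and $B/S_{i_{1}}$ respectively. The pairwise-nesting condition (1) is easy to track, but condition (2) is delicate: a pairwise-disjoint subfamily of $\{S_{i_{1}},\dots,S_{i_{k}}\}$ can split between the restriction and the contraction, one of its members may be absorbed into $S_{i_{1}}$, and the clause ``its union is not in $B$'' must be matched --- via the ``no conflict'' requirement for the two-element subfamilies through $S_{i_{1}}$, and via the defining property that for $T\subseteq[n+1]\setminus S_{i_{1}}$ one has $T\in B/S_{i_{1}}$ iff $T\in B$ or $T\sqcup S_{i_{1}}\in B$ --- with the analogous clause inside $B/S_{i_{1}}$. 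Once the case analysis is organized so that ``$S_{i_{j}}$ nested in, disjoint from, or containing $S_{i_{1}}$'' matches ``$S_{i_{j}}$ lives in $B|_{S_{i_{1}}}$, lives in $B/S_{i_{1}}$, or is absorbed'', the remaining checks are routine, so I expect the strict submodularity of $\gamma$ at building-set unions and this condition-(2) translation to be the only genuinely substantive points.
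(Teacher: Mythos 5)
The paper does not prove this proposition at all: it is quoted verbatim from \cite[Theorem~1.5.13]{TT} (going back to Feichtner--Sturmfels and Postnikov), so there is no internal argument to compare yours against. Your outline is, in substance, the standard proof of that cited theorem: facets of $P_B$ are indexed by nonmaximal $S\in B$ via Lemma~\ref{NestBound}, simplices of the nerve complex of a simple polytope are nonempty facet intersections, pairwise disjointness of facets is detected by strictness of submodularity of the support function $\gamma(T)=\#\{U\in B: U\subseteq T\}$ at a building-set union, and the $k$-fold case is handled by induction through the decomposition $F_S\cong P_{B|_S}\times P_{B/S}$. Two points deserve explicit care if you write this out. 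First, your formula for $\gamma$ presumes the convention $P_B=\sum_{S\in B}\Delta_S$ (with the summand $\Delta_{[n+1]}$ included); the paper's Definition~\ref{nest} sums over $B\setminus\{[n+1]\}$, under which $P_{B_\Delta}$ degenerates, so you are right to use the standard convention --- but note that the summand $\Delta_{[n+1]}$ is exactly what makes your strictness argument work in the case $S\cup S'=[n+1]$, so it cannot be dropped. Second, the condition-(2) bookkeeping under contraction that you flag is genuinely the crux: for a pairwise-disjoint subfamily landing in $B/S_{i_1}$, membership of its union in $B/S_{i_1}$ is equivalent to membership of the union, \emph{or of the union together with $S_{i_1}$}, in $B$, so condition (2) downstairs corresponds to condition (2) upstairs applied to the subfamily possibly augmented by $S_{i_1}$; you sketch this correctly but it is the one step that is not routine and must be written out (together with the observation that no two non-conflicting $S_{i_j}$ can have the same image in $B/S_{i_1}$, since a collision $S_{i_j}=S_{i_{j'}}\setminus S_{i_1}$ would force $S_{i_j}\sqcup S_{i_1}\in B$, i.e.\ a conflict). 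With those two items filled in, the argument is complete and agrees with the proof in the literature that the paper is citing.
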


\begin{lemm}\label{NestSetRestrict}
Suppose $P=P_B$ is a nestohedron on a connected building set $B$ on $[n+1]$. Then for any $S\in B$ the full subcomplex of the nested set complex $N_{B}$ on the vertex set $B|_S$ is combinatorially equivalent to the nested set complex $N_{B|_S}$. 
\end{lemm}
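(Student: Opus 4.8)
The plan is to reduce the claimed combinatorial equivalence to a statement about the face posets, using the purely combinatorial description of the nested set complex $N_B$ in terms of the building set $B$ provided by Proposition~\ref{Fposet}. Recall first that restriction commutes well with the building-set axioms: $B|_S = \{T \in B \mid T \subseteq S\}$ is itself a connected building set on $S$ (it satisfies axioms 1) and 2) of Definition~\ref{nest} since any $T_1, T_2 \in B$ with $T_1, T_2 \subseteq S$ and $T_1 \cap T_2 \neq \varnothing$ have $T_1 \cup T_2 \in B$ and $T_1 \cup T_2 \subseteq S$; and $S \in B$ is its maximal element). So $N_{B|_S}$ is defined, and $P_{B|_S}$ is the restriction nestohedron appearing in Lemma~\ref{NestBound}.

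Next I would match up vertex sets. The vertices of $N_B$ are the nonmaximal elements of $B$, and the full subcomplex on the vertex subset $B|_S$ has as its vertices precisely the nonmaximal-in-$B$ elements that happen to lie in $B|_S$, i.e.\ those $T \in B$ with $T \subsetneq S$ (every element of $B|_S$ other than $S$ itself, since $S$ is maximal in $B$ and thus not a vertex of $N_B$). On the other hand the vertices of $N_{B|_S}$ are the nonmaximal elements of $B|_S$, which are exactly the $T \in B$ with $T \subsetneq S$. So the two vertex sets coincide canonically, via $T \mapsto T$. It remains to check that a collection $\{T_1,\dots,T_k\}$ of such elements forms a simplex in the full subcomplex $(N_B)_{B|_S}$ if and only if it forms a simplex in $N_{B|_S}$.

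The key point is that both conditions in Proposition~\ref{Fposet} are \emph{intrinsic} to the subsets $T_i$ and their pairwise intersections/disjoint unions, and — crucially for condition (2) — membership of a disjoint union $T_{t_1} \sqcup \cdots \sqcup T_{t_l}$ in $B$ is equivalent to its membership in $B|_S$ whenever all $T_{t_j} \subseteq S$, since then $T_{t_1} \sqcup \cdots \sqcup T_{t_l} \subseteq S$ and so it lies in $B$ if and only if it lies in $B|_S$. Thus condition (1) for $\{T_1,\dots,T_k\}$ viewed inside $B$ is verbatim the same as condition (1) viewed inside $B|_S$ (it mentions no building set at all), and condition (2) likewise transfers because every pairwise-disjoint sub-collection has its disjoint union contained in $S$. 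A simplex of $(N_B)_{B|_S}$ is, by definition of a full subcomplex, exactly a simplex of $N_B$ all of whose vertices lie in $B|_S$, i.e.\ a $\{T_1,\dots,T_k\}$ satisfying (1)--(2) in $B$ with all $T_i \subsetneq S$; by the above this is precisely a simplex of $N_{B|_S}$. Hence the face posets are isomorphic and the complexes are combinatorially equivalent.

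I expect the only real subtlety — and the step to state carefully — to be the equivalence ``$T_{t_1} \sqcup \cdots \sqcup T_{t_l} \in B \iff T_{t_1} \sqcup \cdots \sqcup T_{t_l} \in B|_S$'' used in translating condition (2); this is where one genuinely uses that $S \in B$ and hence that no element of $B$ properly contained in $S$ but not in $B|_S$ can exist. Everything else is bookkeeping with the definition of a full subcomplex and the vertex identification. I would also remark that this is the non-flag analogue of \cite[Lemma 2.13]{B-L} and feeds directly into Proposition~\ref{phiPsi}.
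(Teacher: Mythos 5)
Your argument is in substance the paper's own proof: both rest on Proposition~\ref{Fposet} and on the single key observation that a disjoint union of elements of $B|_S$ is contained in $S$, hence lies in $B$ iff it lies in $B|_S$, so conditions (1)--(2) transfer verbatim. One slip in your vertex bookkeeping: $S$ is \emph{not} maximal in $B$ unless $S=[n+1]$ (the maximal element of a connected building set is $[n+1]$), so $S$ genuinely is a vertex of $N_B$; the full subcomplex of $N_B$ on all of $B|_S$ is therefore the cone over $N_{B|_S}$ with apex $S$ (since $S$ meets every $T\subsetneq S$ nontrivially, condition (2) never involves it), and the lemma must be read with vertex set $B|_S\setminus\{S\}$, i.e.\ the nonmaximal elements of $B|_S$. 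The paper is equally silent on this point, and with that reading your identification of vertex sets and the rest of the argument go through unchanged.
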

\begin{proof}
By Proposition~\ref{Fposet} it suffices to observe that for pairwisely disjoint elements $S_{i_{t_1}},\ldots,S_{i_{t_l}}\in B_S$ one has: $S_{i_{t_1}},\ldots,S_{i_{t_l}}\subseteq S$ and $S_{i_{t_1}}\sqcup\ldots\sqcup S_{i_{t_l}}\subseteq S$, thus $S_{i_{t_1}}\sqcup\ldots\sqcup S_{i_{t_l}}\notin B$ if and only if $S_{i_{t_1}}\sqcup\ldots\sqcup S_{i_{t_l}}\notin B|_S$.
\end{proof}

\begin{exam}
1. If $P=Pe^n$, then for any $S\in B(P)$ one has: $P_{B|_S}=Pe^{|S|-1}$ and $N_{B|_S}$ is a full subcomplex in $N_{B}$ (both of them are barycentric subdivisions of boundaries of simplices).
2. If $P=As^n$, then for any path graph $S$ on $[i],i\leq n+1$ one has: $P_{B|_S}=As^{|S|-1}$ and $N_{B|_S}$ is a full subcomplex in $N_B=\partial (As^{n})^*$.
\end{exam}

\begin{rema}
Note that the above Lemma does not hold when $B|_S$ is replaced by $(B/S)\cap B$. Indeed, consider $B=B(P,4)$ to be the building set of $P^{4}\in\mathcal P_{Mas}$, $S_{1}={2}$, $S_{2}={4}$ and $S={1,3}$. Then $S\in B$, $S_{1},S_{2}\in B/S$, $(B/S)\cap B=\{\{2\},\{4\}\}$ and $S_{1}\sqcup S_{2}\notin B$. However, $S_{1}\sqcup S_{2}\in B/S$, since $S_{1}\sqcup S_{2}\sqcup S=\{1,2,3,4\}\in B$. We immediately obtain that the full subcomplex on $(B/S)\cap B$ in $N_{B}$ is a 1-simplex, but $N_{B/S}$ consists of two disjoint points.
\end{rema}

Consider a connected building set $B$ on $[n+1]$ and $S\in B\backslash [n+1]$.
Due to~\cite[Construction 2.8]{B-L} we have an induced embedding of moment-angle manifolds $\hat{\phi}_{S}:\,\mathcal Z_{P_{B|_S}}\to\mathcal Z_{P_B}$ and due to Lemma~\ref{NestSetRestrict} and~\cite[Corollary 2.7]{B-L} we have an induced embedding of moment-angle-complexes $j_{S_{*}}:\,\mathcal Z_{N_{B|_S}}\to\mathcal Z_{N_B}$ that has a retraction map, where $N_B=K_{P_B}$ and $N_{B|_S}=K_{P_{B|_S}}$. Combinatorial properties of nestohedra allow us to obtain the following result.

\begin{prop}\label{phiPsi}
Suppose $P_B$ is a nestohedron. Then there is a commutative diagram
$$\begin{CD}
  \mathcal Z_{P_{B|_S}} @>\hat{\phi}_{S}>> \mathcal Z_{P_{B}}\\
  @VVh_{1} V\hspace{-0.2em} @VVh_{2} V @.\\
  \mathcal Z_{N_{B|_S}} @>j_{S_*}>>\mathcal Z_{N_B},
\end{CD}\eqno %(1.2)
$$
where $h_{1}$ and $h_{2}$ are homeomorphisms. In particular, $\hat{\phi}_{S}$ induces a split epimorphism in cohomology for any $S\in B\backslash [n+1]$.
\end{prop}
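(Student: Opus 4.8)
The plan is to construct the homeomorphisms $h_1$ and $h_2$ from the canonical homeomorphism $h_P\colon \mathcal Z_P\to\mathcal Z_{K_P}$ of~\cite{bu-pa00-2}, applied to $P=P_B$ and to $P=P_{B|_S}$, and then to check that the resulting square commutes. So first I would set $h_2 := h_{P_B}\colon \mathcal Z_{P_B}\to\mathcal Z_{N_B}$ and $h_1 := h_{P_{B|_S}}\colon \mathcal Z_{P_{B|_S}}\to\mathcal Z_{N_{B|_S}}$; these are homeomorphisms by the cited result. The content of the proposition is then entirely in the commutativity $j_{S_*}\circ h_1 = h_2\circ \hat\phi_S$, together with the final clause about split epimorphisms in cohomology.

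For the commutativity, the key point is that all four maps are built from the combinatorics of $B$ in a coherent way. The face embedding $i_{P_{B|_S}}\colon P_{B|_S}\hookrightarrow P_B$ identifies $P_{B|_S}$ with the face of $P_B$ corresponding to the building-set restriction; on the level of nerve complexes, Lemma~\ref{NestSetRestrict} identifies $N_{B|_S}$ with the full subcomplex $(N_B)_{B|_S}$ of $N_B$ on the vertex subset $B|_S\subseteq B\setminus[n+1]$. The map $\hat\phi_S$ is the induced map of moment-angle manifolds from~\cite[Construction 2.8]{B-L}, and $j_{S_*}$ is the induced map of moment-angle-complexes of full subcomplexes from~\cite[Corollary 2.7]{B-L}. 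I would then invoke~\cite[Lemma 2.13]{B-L}: that lemma already establishes exactly this kind of compatibility (the canonical homeomorphisms link $\hat i_F$ and $\hat j_{F,P}$ into a commutative square) for \emph{flag} polytopes and faces $F\subset P$. Since $P_B$ need not be flag, the argument of~\cite[Lemma 2.13]{B-L} does not apply verbatim; instead I would re-run it, replacing the flag input by Lemma~\ref{NestSetRestrict}, which supplies precisely the missing ingredient, namely that the face of the polytope corresponds to a \emph{full subcomplex} of the nerve with the right combinatorics. Concretely, one checks that on each coordinate factor of $\prod_{i}(D^2,S^1)$ the two composites $j_{S_*}\circ h_1$ and $h_2\circ\hat\phi_S$ agree — this reduces to the set-theoretic identity that a face of $P_B$ lies in $F_j$ (for $j\in B|_S$) under $i_{P_{B|_S}}$ iff the corresponding simplex of $N_{B|_S}$, viewed inside $N_B$, contains the vertex $j$, which is immediate once $N_{B|_S}$ is identified with the full subcomplex $(N_B)_{B|_S}$.

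For the last sentence, once the square commutes, $\hat\phi_S^* = h_1^*\circ j_{S_*}^*\circ (h_2^*)^{-1}$, so $\hat\phi_S^*$ is conjugate to $j_{S_*}^*$; and $j_{S_*}^*\colon H^*(\mathcal Z_{N_B})\to H^*(\mathcal Z_{N_{B|_S}})$ is a split epimorphism by~\cite[Corollary 2.7]{B-L} (restriction to a full subcomplex always splits, with the splitting induced by the inclusion of full subcomplexes). Hence $\hat\phi_S^*$ is a split epimorphism as well.

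The main obstacle I anticipate is not the existence of $h_1,h_2$ — those are off-the-shelf — but the bookkeeping needed to show that the combinatorial identifications are mutually compatible: one must be careful that the vertex relabeling identifying $N_{B|_S}$ with the full subcomplex of $N_B$ matches, facet by facet, the identification of $P_{B|_S}$ with the corresponding face of $P_B$ used to define $\hat\phi_S$. This is where Lemma~\ref{NestSetRestrict} does the real work, and where the non-flag case genuinely differs from~\cite[Lemma 2.13]{B-L}; I would spell out this matching explicitly rather than gesture at it, since the Remark following Lemma~\ref{NestSetRestrict} shows that the naive analogue with $(B/S)\cap B$ in place of $B|_S$ fails, so the choice of identification is not innocuous.
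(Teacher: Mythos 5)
Your proposal is correct and follows essentially the same route as the paper: the paper's proof of Proposition~\ref{phiPsi} is just the citation of \cite[Corollary 2.14]{B-L}, which is precisely the general (non-flag) form of \cite[Lemma 2.13]{B-L} that you reconstruct, and your identification of Lemma~\ref{NestSetRestrict} as the ingredient replacing flagness --- namely, that $P_{B|_S}$ corresponds to the \emph{full} subcomplex $N_{B|_S}\subset N_B$, so the canonical homeomorphisms $h_{P_B}$ and $h_{P_{B|_S}}$ of \cite{bu-pa00-2} intertwine $\hat{\phi}_S$ with $j_{S_*}$ --- is exactly the point the paper itself emphasizes in its introduction. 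The only difference is packaging: you unpack the coordinatewise commutativity check that the paper outsources to the cited corollary.
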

\begin{proof}
This follows from~\cite[Corollary 2.14]{B-L}.
\end{proof}

Note that Proposition~\ref{phiPsi} implies the induced embedding of moment-angle manifolds $\hat{\phi}_{S}$ has a retraction map (and thus induces a split ring epimorphism in cohomology) for any $P_B$, not necessarily flag.

Suppose $B$ is a building set, $S\in B$ and denote by $P_S$ the nestohedron $P_{B|_S}$.

\begin{prop}\label{CohomEpi}
The following statements hold.
\begin{itemize}
\item[(1)] If $j:\,N_{B|_S}\hookrightarrow N_B$, then $j^*:\,H^*(\mathcal Z_{P_B})\rightarrow H^*(\mathcal Z_{P_S})$ is a natural split ring epimorphism;
\item[(2)] If a Massey product $\langle\alpha_{1},\ldots,\alpha_{k}\rangle\in H^*(\mathcal Z_{P_S})$ is defined, then there exist $\beta_{t}\in H^*(\mathcal Z_{P_B})$ for $1\leq t\leq k$ such that $j^*(\beta_{t})=\alpha_t$, $\langle\beta_{1},\ldots,\beta_{k}\rangle$ is defined, and $j^*\langle\beta_{1},\ldots,\beta_{k}\rangle=\langle\alpha_{1},\ldots,\alpha_{k}\rangle$. Moreover, if $\langle\alpha_{1},\ldots,\alpha_{k}\rangle$ is nontrivial (resp. strictly defined), then $\langle\beta_{1},\ldots,\beta_{k}\rangle$ is nontrivial (resp. strictly defined).
\end{itemize}
\end{prop}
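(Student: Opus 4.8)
The plan is to handle the two parts separately: part (1) is a combination of Lemma~\ref{NestSetRestrict} with the standard retraction property of full subcomplexes, and part (2) is reduced to a splitting of the differential graded algebras of Theorem~\ref{BPtheo}(II).

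\textbf{Part (1).} By Lemma~\ref{NestSetRestrict}, the simplicial inclusion $j\colon N_{B|_S}\hookrightarrow N_B$ identifies $N_{B|_S}$ with the full subcomplex of $N_B$ on the vertex subset $J\subseteq[m]$ corresponding to the nonmaximal elements of $B$ contained in $S$. For any full subcomplex $K_J\subseteq K$ the canonical inclusion $\mathcal Z_{K_J}\hookrightarrow\mathcal Z_K$ admits a retraction, naturally in $K$ and $J$, by~\cite[Corollary 2.7]{B-L}. Composing with the canonical homeomorphisms $\mathcal Z_{P_B}\cong\mathcal Z_{N_B}$ and $\mathcal Z_{P_S}\cong\mathcal Z_{N_{B|_S}}$ of~\cite{bu-pa00-2} (cf. Proposition~\ref{phiPsi}), I get that $j^{*}\colon H^{*}(\mathcal Z_{P_B})\to H^{*}(\mathcal Z_{P_S})$ is a ring homomorphism which is onto and is split by the map induced by the retraction; naturality is functoriality of $\mathcal Z_{(-)}$ and of $H^{*}$.

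\textbf{Part (2).} The key point is that this retraction is modelled algebraically. Since the subcomplex $(N_B)_J$ is full, its Stanley--Reisner ideal is spanned by the minimal non-faces of $N_B$ lying in $J$, so the assignment $v_i\mapsto v_i,\ u_i\mapsto u_i$ for $i\in J$ and $v_i\mapsto 0,\ u_i\mapsto 0$ for $i\notin J$ descends to a surjective morphism of DGAs $\rho\colon R(P_B)\twoheadrightarrow R(P_S)$, while the inclusion $\sigma\colon R(P_S)\hookrightarrow R(P_B)$, $v_i\mapsto v_i$, $u_i\mapsto u_i$ ($i\in J$), is a morphism of DGAs with $\rho\sigma=\mathrm{id}$; under Theorem~\ref{BPtheo}(II) one has $\rho_{*}=j^{*}$. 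Now start from a defining system $C=(c_{pq})$ for $\langle\alpha_1,\ldots,\alpha_k\rangle$ in $R(P_S)$. As $\sigma$ is a degree-preserving morphism of DGAs, it commutes with $d$, with the product, and with the sign operation $c\mapsto\bar c$, so $\sigma(C)=(\sigma(c_{pq}))$ again satisfies conditions (1)--(3) of Definition~\ref{DefiningSystem}; hence it is a defining system for $(\beta_1,\ldots,\beta_k)$ with $\beta_p:=[\sigma(c_{p,p+1})]\in H^{*}(\mathcal Z_{P_B})$ and $[a(\sigma(C))]=[\sigma(a(C))]\in\langle\beta_1,\ldots,\beta_k\rangle$. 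Then $j^{*}\beta_p=\rho_{*}[\sigma(c_{p,p+1})]=[c_{p,p+1}]=\alpha_p$. Symmetrically, $\rho$ carries every defining system for $(\beta_1,\ldots,\beta_k)$ to a defining system for $(\rho_{*}\beta_1,\ldots,\rho_{*}\beta_k)=(\alpha_1,\ldots,\alpha_k)$; combining the two directions with $\rho\sigma=\mathrm{id}$ yields $j^{*}\langle\beta_1,\ldots,\beta_k\rangle=\langle\alpha_1,\ldots,\alpha_k\rangle$.

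\textbf{Nontriviality and strict definedness.} If $0\in\langle\beta_1,\ldots,\beta_k\rangle$ then $0=j^{*}(0)\in j^{*}\langle\beta_1,\ldots,\beta_k\rangle=\langle\alpha_1,\ldots,\alpha_k\rangle$, contradicting nontriviality downstairs; so nontriviality lifts. For strict definedness I take $\beta_p=\sigma_{*}\alpha_p$ and run the same $\sigma/\rho$ correspondence on every sub-Massey product $\langle\alpha_i,\ldots,\alpha_j\rangle$, $(i,j)\neq(1,k)$. The delicate point — and the step I expect to be the main obstacle — is to check that each such lifted sub-product stays \emph{strictly} trivial, i.e.\ equal to $\{0\}$ with no new indeterminacy arising from $\ker j^{*}$. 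For this I would use the $\mathbb N\oplus\mathbb Z^{m}$-multigrading of Theorem~\ref{BPtheo}(II): it shows that $\ker j^{*}=\bigoplus_{I\not\subseteq J}\widetilde H^{*}((N_B)_I)$ is an ideal complementary to the subring $\sigma_{*}H^{*}(\mathcal Z_{P_S})$, on which $j^{*}$ restricts to a ring isomorphism; choosing defining systems with entries supported in $\sigma_{*}H^{*}(\mathcal Z_{P_S})$ then keeps the sub-products strictly trivial. This is exactly the bookkeeping carried out in~\cite[Corollary 3.3]{B-L} for the family $\mathcal Q$, and it applies here verbatim.
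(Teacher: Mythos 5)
Your proposal is correct and follows essentially the same route as the paper: part (1) is exactly Lemma~\ref{NestSetRestrict} combined with~\cite[Corollary 2.7]{B-L}, and part (2) is the DGA-level splitting $\rho\sigma=\mathrm{id}$ on $R(P_B)\to R(P_S)$ that the paper compresses into ``apply Theorem~\ref{BPtheo}''. The one step you flag as a possible obstacle is in fact immediate with the paper's conventions: since ``trivial'' is defined as $[a(C)]=0$ for \emph{some} defining system, pushing a vanishing defining system for each proper sub-product forward along $\sigma$ already yields $0\in\langle\beta_i,\ldots,\beta_j\rangle$, so the multigrading bookkeeping is not needed.
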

\begin{proof}
Statement (1) follows directly from Lemma~\ref{NestSetRestrict} and~\cite[Corollary 2.7]{B-L}.
To prove statement (2) note that $N_{B|_S}$ is a full subcomplex in $N_B$ by Lemma~\ref{NestSetRestrict}, thus we can apply (1) to $j^*$ induced by an embedding of building sets $B|_S\rightarrow B$ that gives an embedding of simplicial complexes $j:\,N_{B|_S}\hookrightarrow N_B$. The proof finishes by applying Theorem~\ref{BPtheo}. 
\end{proof}

Using Proposition~\ref{CohomEpi} we can get the following combinatorial condition on a line family $\mathcal F$, under which $\mathcal F$ is a GDFP. 
Namely, the following statement holds.

\begin{prop}\label{NestDirect}
Suppose $\mathcal F=\{P_{B(n)}|\,n\geq 0\}$ is a line family of nestohedra on connected building sets $B(n)$ on $[n+1]$ for $n\geq 0$. If for any $n>2$ there exists an element $S(n)\in B(n)$ with $|S(n)|=n$ such that $P_{B|_{S(n)}}=P_{B(n-1)}$, then $\mathcal F$ is a geometric direct family.\\
In particular, the following nestohedra families are GDFP: cubes $\Box$, permutohedra $Pe$, stellahedra $St$, cyclohedra $Cy$, and associahedra $As$.
\end{prop}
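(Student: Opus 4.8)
The plan is to reduce the statement to the one-step restriction results of Proposition~\ref{phiPsi} and Proposition~\ref{CohomEpi} and then to iterate them. Write $P^n=P_{B(n)}$ and $N^n=N_{B(n)}=K_{P^n}$, and fix once and for all, for every $n>2$, the element $S(n)\in B(n)$ with $|S(n)|=n$ and $P_{B(n)|_{S(n)}}=P^{n-1}$ furnished by the hypothesis. One has to check conditions~(1)--(3) of Definitions~\ref{ADFP} and~\ref{GDFP}. The cases $r\le 1$ are immediate from the conventions on families of polytopes (a point and a segment occur as a vertex, resp.\ an edge, of every simple polytope, and the induced maps of nerve complexes and moment-angle manifolds are the standard ones), so from now on I assume $2\le r<n$.

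First I would record the one-step building blocks. By Lemma~\ref{NestBound}, $P_{B(n)|_{S(n)}}\times P_{B(n)/S(n)}$ is a facet of $P^n$; choosing a vertex of the second factor exhibits $P_{B(n)|_{S(n)}}$, of dimension $|S(n)|-1=n-1$, as a face of $P^n$ combinatorially equal to $P^{n-1}$, hence a face embedding $i^n_{n-1}\colon P^{n-1}\hookrightarrow\partial P^n$ and, via~\cite[Construction 2.8]{B-L}, a map $\hat\phi^n_{n-1}\colon \mathcal Z_{P^{n-1}}\to\mathcal Z_{P^n}$. By Lemma~\ref{NestSetRestrict} the full subcomplex of $N^n$ on the vertex set $B(n)|_{S(n)}$ equals $N_{B(n)|_{S(n)}}=K_{P^{n-1}}$, giving a full-subcomplex embedding $j^n_{n-1}\colon K_{P^{n-1}}\hookrightarrow N^n$ and a map $j_{S(n)*}$ of moment-angle complexes. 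Proposition~\ref{phiPsi} fits $\hat\phi^n_{n-1}$ and $j_{S(n)*}$ into a square with vertical homeomorphisms, so $(\hat\phi^n_{n-1})^{*}$ and $(j^n_{n-1})^{*}$ are equivalent group homomorphisms, and by Proposition~\ref{CohomEpi} they are split ring epimorphisms.

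Next I would iterate. Composing the one-step face embeddings yields $i^n_r=i^n_{n-1}\circ\cdots\circ i^{r+1}_r\colon P^r\hookrightarrow P^n$; a face of a face being a face, this realizes $P^r$ as a face of $P^n$, and $i^n_r=i^n_s\circ i^s_r$ for all $r<s<n$ by construction, which is condition~(1). Since a full subcomplex of a full subcomplex is a full subcomplex, composing the $j^k_{k-1}$ realizes $K_{P^r}$ as a full subcomplex of $K_{P^n}$ with the same coherence relations, which is condition~(2). For condition~(3) I would stack the $n-r$ commutative squares coming from Proposition~\ref{phiPsi}; the one point to verify is that consecutive squares glue, i.e.\ the corner $\mathcal Z_{N_{B(k)|_{S(k)}}}$ of one square is identified with the corner $\mathcal Z_{N_{B(k-1)}}$ of the next through the homeomorphism induced by the combinatorial equivalence $N_{B(k)|_{S(k)}}\cong N_{B(k-1)}$, and similarly on the polytope side using the Remark after Construction~\ref{mamfdDJ} and the canonical homeomorphisms $h_P\colon \mathcal Z_P\to\mathcal Z_{K_P}$. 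Applying $H^*(-)$ to the resulting large commutative diagram shows $(\hat\phi^n_r)^{*}$ and $(j^n_r)^{*}$ are equivalent, which is condition~(3); hence $\mathcal F$ is a GDFP.

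Finally, for the named families it remains only to exhibit $S(n)$: for cubes, permutohedra, stellahedra and associahedra one may take $S(n)=[n+1]\setminus\{v\}$ for a suitable vertex $v$, and a direct inspection of the underlying (graphical or combinatorial) building set shows $P_{B(n)|_{S(n)}}=P^{n-1}$ in each case; the cyclohedron case deserves a separate check, since deleting one vertex of a cycle produces a path rather than a smaller cycle. In fact all these polytopes are flag, so once the $S(n)$ are fixed one could alternatively invoke that an ADFP of flag polytopes is automatically a GDFP. The main obstacle I anticipate is not conceptual but organizational: pinning down the identifications in the stacked diagram for condition~(3) --- choosing the $S(k)$ compatibly for all $k$ and checking that the combinatorial-equivalence homeomorphisms on both sides are coherent with the canonical homeomorphisms, so that the large diagram genuinely commutes and the direct-system axioms hold on the nose.
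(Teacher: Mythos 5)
Your proof is correct and follows essentially the same route as the paper's: Lemma~\ref{NestBound} and Lemma~\ref{NestSetRestrict} produce the one-step face and full-subcomplex embeddings, Propositions~\ref{phiPsi} and~\ref{CohomEpi} supply the commutative square and the split epimorphisms, and composition/induction yields conditions (1)--(3). Your remark that the cyclohedron does not satisfy the stated hypothesis (restricting the cycle to $n$ vertices gives a path, hence $As^{n-1}$ rather than $Cy^{n-1}$, so $Cy^{n-1}$ occurs only as a contraction facet) is a genuine point that the paper's one-line treatment of the named families glosses over, and closing it via flagness (an ADFP of flag polytopes is automatically a GDFP, by \cite[Theorem 2.16]{B-L}) is exactly the right fix.
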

\begin{proof}
By Lemma~\ref{NestSetRestrict} $N_{B|_{S(n)}}$ is a full subcomplex in $N_{B(n)}$ for each $n>2$. On the other hand, the formula for the boundary operator $d$ value on $\mathcal F$ shows that $P_{B|_{S(n)}}\cong P_{B(n-1)}$ is combinatorially equivalent to a facet of $P_{B(n)}$. Due to Proposition~\ref{CohomEpi} (1) the embedding $j:\,N_{B|_{S(n)}}\hookrightarrow N_{B(n)}$ of the full subcomplex into the nested set complex induces a split ring epimorphism in cohomology of their moment-angle-complexes. The proof of the first part of the statement finishes by induction on $n$ and Proposition~\ref{phiPsi}. The rest follows from the explicit formulas for the value of the boundary operator $d$ on those families of nestohedra, see~\cite{B}, and the first part of the statement.
\end{proof}

\begin{exam}
Note that the family of simplices $\Delta$ is not even an ADFP (in particular, Proposition~\ref{NestDirect} is not valid), since for any $n>2$ and any $S\in B(n)\backslash [n+1]$ one has: $P_{B|_S}$ is a point and $(K_P)_J$ is either $K_P$, or a simplex, for any $P\in\Delta$.
\end{exam}

\begin{theo}\label{MasAllOrder}
For $P^{n}_{Mas}\in\mathcal P_{Mas}$ there exists a nontrivial strictly defined $k$-fold Massey product $\langle\alpha_{1}^{n},\ldots,\alpha_{k}^{n}\rangle$ in $H^*(\mathcal Z_{P^n_{Mas}})$ for any $2\leq k\leq n$.
Moreover, for any $2\leq r\leq s$ there exists a natural embedding $j_{r}^{s}:\,N_{B(P,r)}\hookrightarrow N_{B(P,s)}$ such that $(j_{r}^{s})^{*}\langle\alpha_{1}^{s},\ldots,\alpha_{k}^{s}\rangle=\langle\alpha_{1}^{r},\ldots,\alpha_{k}^{r}\rangle$ when $2\leq k\leq r$.
\end{theo}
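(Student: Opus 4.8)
The plan is to exhibit $\mathcal P_{Mas}$ as a direct family already at the level of nested set complexes, and then to transport the top‑order Massey product of Lemma~\ref{FlagNestMassey} down to every lower order along the resulting split ring epimorphisms. So the first task is to build the embeddings $j_r^s$, and the second is to build, for each order $k$, a system of $k$‑fold Massey products in $H^*(\mathcal Z_{P^n_{Mas}})$, $n\ge k$, that is compatible with these embeddings.

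For the embeddings, for $2\le r<s$ I would take $S_r^s=\{1,\ldots,r\}\cup\{s+1\}\subset[s+1]$ and verify, straight from the description of $B(P,s)$ in Proposition~\ref{Pbuildset} (all singletons; the sets $\{1,2\}\cup T$ with $T\subseteq\{3,\ldots,s+1\}$; the sets $\{1\}\cup T$ with $\varnothing\ne T\subseteq\{3,\ldots,s\}$; and $[s+1]$), that $S_r^s\in B(P,s)$ and that the order‑preserving bijection $S_r^s\to[r+1]$, which sends $s+1$ to $r+1$, carries $B(P,s)|_{S_r^s}$ isomorphically onto $B(P,r)$; the point is that the vertex $s+1$ of $[s+1]$, which may occur together with $\{1,2\}$ but never with $\{1\}$ alone, plays exactly the role of the last vertex $r+1$ of $[r+1]$ in $B(P,r)$. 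By Lemma~\ref{NestSetRestrict} the corresponding full subcomplex of $N_{B(P,s)}$ is combinatorially $N_{B(P,r)}$, and this is the natural embedding $j_r^s\colon N_{B(P,r)}\hookrightarrow N_{B(P,s)}$. Since restriction of building sets is transitive and $S_r^s\subseteq S_u^s$ for $r\le u\le s$, one gets $j_r^s=j_u^s\circ j_r^u$, so every $j_r^s$ is a composite of one‑step embeddings $j_m^{m+1}$.

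For the Massey products I would fix $k\ge2$ and induct on $s\ge k$. For $s=k$, Lemma~\ref{FlagNestMassey} gives a strictly defined nontrivial $k$‑fold Massey product in $H^*(\mathcal Z_{P^k_{Mas}})$; fix a defining system for it and denote its cohomology data $\alpha_1^k,\ldots,\alpha_k^k\in H^3(\mathcal Z_{P^k_{Mas}})$. For the step $s-1\to s$, by Step~1 and Lemma~\ref{NestSetRestrict} the complex $N_{B(P,s-1)}$ is a full subcomplex of $N_{B(P,s)}$, so $(j_{s-1}^s)^*\colon H^*(\mathcal Z_{P^s_{Mas}})\to H^*(\mathcal Z_{P^{s-1}_{Mas}})$ is a split ring epimorphism by Proposition~\ref{CohomEpi}(1); applying Proposition~\ref{CohomEpi}(2) to $\langle\alpha_1^{s-1},\ldots,\alpha_k^{s-1}\rangle$ produces $\alpha_1^s,\ldots,\alpha_k^s\in H^3(\mathcal Z_{P^s_{Mas}})$ with $(j_{s-1}^s)^*(\alpha_t^s)=\alpha_t^{s-1}$ for which $\langle\alpha_1^s,\ldots,\alpha_k^s\rangle$ is again defined, strictly defined and nontrivial, and $(j_{s-1}^s)^*\langle\alpha_1^s,\ldots,\alpha_k^s\rangle=\langle\alpha_1^{s-1},\ldots,\alpha_k^{s-1}\rangle$. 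Letting $k$ run over $\{2,\ldots,n\}$ gives the first assertion for $P^n_{Mas}$; writing $j_r^s$ as a composite of one‑step embeddings and composing the displayed identities gives $(j_r^s)^*\langle\alpha_1^s,\ldots,\alpha_k^s\rangle=\langle\alpha_1^r,\ldots,\alpha_k^r\rangle$ for $2\le k\le r\le s$, which is the second assertion. (Here the statement must be read as: for each $k$ there is such a sequence $\alpha_\bullet$, depending on $k$; a single sequence working for all orders at once is impossible, since already definedness of $\langle\alpha_1,\ldots,\alpha_k\rangle$ forces $\alpha_1\alpha_2=0$.)

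I expect the only genuinely computational point to be the building‑set identity $B(P,s)|_{S_r^s}\cong B(P,r)$ of Step~1: the naive choice $S_r^s=\{1,\ldots,r+1\}$ fails — its restriction is strictly larger than $B(P,r)$ — so one must exploit the built‑in asymmetry of $B(P,n)$ between the vertex $n+1$ and the vertices $3,\ldots,n$. Once this is settled, everything downstream (the split epimorphisms, the lifting of Massey products with preservation of definedness, strict definedness and nontriviality, and the compatibility of the $j_r^s$) is formal, resting on Lemma~\ref{FlagNestMassey}, Lemma~\ref{NestSetRestrict} and Proposition~\ref{CohomEpi}.
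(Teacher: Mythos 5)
Your proposal is correct and follows essentially the same route as the paper: the paper also takes $S_t=\{1,2,n+1\}\cup\{3,\ldots,t\}$ (identical to your $S_r^s$), invokes Lemma~\ref{NestSetRestrict} to realize $N_{B(P,t)}$ as a full subcomplex of $N_{B(P,n)}$, and concludes via Lemma~\ref{FlagNestMassey} and Proposition~\ref{CohomEpi}. Your only additions are an explicit verification of the restriction identity $B(P,s)|_{S_r^s}\cong B(P,r)$ (which the paper instead reads off from Theorem~\ref{boundary}) and the transitivity bookkeeping for the maps $j_r^s$; both are consistent with the paper's argument.
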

\begin{proof}
Suppose $B=B(P,n)$. By Theorem~\ref{boundary} for $S_{t}=\{1,2,n+1,I\}$, where $I=\{3,4,\ldots,t\}$ and $2\leq t\leq n-1$ (for $t=2$ set $S_{t}=\{1,2,n+1\}$ and $I=\varnothing$), one has: $P_{B|_{S_t}}\cong P^{t}_{Mas}\in\mathcal P_{Mas}$. Consider a natural embedding of building sets $B|_{S_{t}}\rightarrow B$. Due to Lemma~\ref{NestSetRestrict} the latter map induces an embedding of nested set complexes $j_{t}^{n}:\,N_{B|_{S_t}}\hookrightarrow N_{B}$, that is, there exists
a full subcomplex in $N_{B(P,n)}=K_{P^n_{Mas}}$ isomorphic to $N_{B(P,k)}$ for $2\leq k\leq n-1$, which finishes the proof by applying Lemma~\ref{FlagNestMassey} and Proposition~\ref{CohomEpi}. Since $\mathcal P_{Mas}$ consists of flag polytopes, the rest of the proof could also be obtained applying~\cite[Theorem 2.16]{B-L} instead of Proposition~\ref{CohomEpi}.
\end{proof}

\begin{rema}
Another way to prove the Theorem above is to apply~\cite[Corollary 3.7]{L3} to the induced subcomplex of $N_{B(P,n)}$ on the vertex set $\{\{i\},\{1,2,\ldots,k\}|1\leq i\leq n+1, 2\leq k\leq n\}$ (see the proof of Lemma~\ref{FlagNestMassey}). 
\end{rema}

\begin{theo}\label{DirFamNontrivial}
The family $\mathcal P_{Mas}$ is a special geometric direct family with nontrivial Massey products.
\end{theo}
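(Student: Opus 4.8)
The plan is to assemble the statement from three ingredients that are already in place: the recognition criterion for geometric direct families of nestohedra (Proposition~\ref{NestDirect}), the restriction machinery for nested set complexes and moment-angle manifolds (Lemma~\ref{NestSetRestrict} and Proposition~\ref{phiPsi}), and the Massey product statement of Theorem~\ref{MasAllOrder}. Since $\mathcal P_{Mas}$ consists of \emph{flag} nestohedra (Definition~\ref{familyP}), once I know it is an algebraic direct family it is automatically a geometric one by the remark following Definition~\ref{GDFP}; so the bulk of the work is the ADFP property, and for that I would lean on Proposition~\ref{NestDirect}.

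\emph{Geometric direct family.} First I would check the hypothesis of Proposition~\ref{NestDirect} by exhibiting, for each $n>2$, a nonmaximal element of $B(P,n)$ of cardinality $n$ whose restriction building set is $B(P,n-1)$. The element $S(n)=\{1,2,3,\ldots,n-1,n+1\}$ lies in $B_{2}(P,n)\subseteq B(P,n)$, has $|S(n)|=n$, and by the identity $B(P,n)|_{\{1,2,n+1\}\sqcup S}=B(P,|S|+2)$ established in the proof of Theorem~\ref{boundary} (take $S=\{3,\ldots,n-1\}$) one gets $P_{B(P,n)|_{S(n)}}=P^{n-1}_{Mas}$; on the other hand the boundary formula of Theorem~\ref{boundary} shows that $P^{n-1}_{Mas}$ is combinatorially a facet of $P^{n}_{Mas}$. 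Proposition~\ref{NestDirect} then gives that $\mathcal P_{Mas}$ is a GDFP.

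\emph{Direct system.} Next I would make sure the embeddings assemble into honest direct systems, not merely unrelated pairs of maps. Setting $S_{t}^{(n)}=\{1,2,n+1\}\cup\{3,4,\ldots,t\}\in B(P,n)$ for $2\le t\le n-1$ (so $S_{n-1}^{(n)}=S(n)$), one has the chain $S_{2}^{(n)}\subset S_{3}^{(n)}\subset\cdots\subset S_{n-1}^{(n)}\subset[n+1]$, and under the canonical relabeling $B(P,n)|_{S_{t}^{(n)}}\cong B(P,t)$ the subset $S_{t}^{(s)}$ goes to $S_{t}^{(r)}$ whenever $t\le r\le s$. Hence by Lemma~\ref{NestSetRestrict} the induced embeddings of nested set complexes $j_{r}^{s}\colon N_{B(P,r)}\hookrightarrow N_{B(P,s)}$ compose correctly, and likewise for the face embeddings; Proposition~\ref{phiPsi} then identifies $(\hat{\phi}_{r}^{s})^{*}$ with $(j_{r}^{s})^{*}$, which is condition (3) of Definition~\ref{GDFP}. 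This is also where consistency with Theorem~\ref{MasAllOrder} enters: the elements $S_{t}$ chosen there are exactly the $S_{t}^{(n)}$ above.

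\emph{Special and DFPM.} Finally, Lemma~\ref{FlagNestMassey} and Theorem~\ref{MasAllOrder} supply, for every $n\ge 2$ and every $2\le k\le n$, a strictly defined nontrivial $k$-fold Massey product $\langle\alpha_{1}^{n},\ldots,\alpha_{k}^{n}\rangle$ in $H^*(\mathcal Z_{P^n_{Mas}})$ with all $\dim\alpha_{i}^{n}=3$; this is precisely the \emph{special} condition, and taking $k=n$ gives a nontrivial Massey product of order $n\to\infty$, so $\mathcal P_{Mas}$ is a direct family with nontrivial Massey products in the sense of Definition~\ref{DFPM}. Together with the GDFP property this yields the theorem, and the compatibility $(j_{r}^{s})^{*}\langle\alpha_{1}^{s},\ldots,\alpha_{k}^{s}\rangle=\langle\alpha_{1}^{r},\ldots,\alpha_{k}^{r}\rangle$ of Theorem~\ref{MasAllOrder} (transported through the homeomorphisms of Proposition~\ref{phiPsi}) records that these Massey products are coherent under the retractions $\hat{\phi}_{r}^{s}$. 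The only part requiring genuine care is the first two steps --- pinning down the cardinality-$n$ restriction element and checking transitivity of the restrictions --- since the hard content about higher Massey products is already isolated in Theorem~\ref{MasAllOrder}; everything there reduces to the building-set identities recorded in the proof of Theorem~\ref{boundary}.
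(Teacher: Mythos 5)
Your proposal is correct and follows essentially the same route as the paper: the paper's proof likewise derives the GDFP property from Theorem~\ref{boundary} together with Propositions~\ref{phiPsi}, \ref{CohomEpi} and \ref{NestDirect}, gets the DFPM condition from Lemma~\ref{FlagNestMassey}, and the special condition from Theorem~\ref{MasAllOrder}. Your explicit identification of the restriction elements $S(n)=\{1,2,3,\ldots,n-1,n+1\}$ and the verification that they match the $S_t$ used in Theorem~\ref{MasAllOrder} simply fills in details the paper leaves implicit.
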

\begin{proof}
It follows from Proposition~\ref{phiPsi} and Proposition~\ref{CohomEpi} that $\mathcal P_{Mas}$ is a geometric direct family, since conditions (1) and (2) of Definition~\ref{ADFP} hold for $\mathcal P_{Mas}$ by Theorem~\ref{boundary} (see also Proposition~\ref{NestDirect}).
Furthermore, using Lemma~\ref{FlagNestMassey} one immediately obtains that $\mathcal P_{Mas}$ satisfies the condition on nontrivial Massey products from Definition~\ref{DFPM}. Finally, the family $\mathcal P_{Mas}$ is special due to Theorem~\ref{MasAllOrder}. This finishes the proof.
\end{proof}

\begin{prop}\label{MasseyNestohedra}
The following statements hold.
\begin{itemize}
\item[(1)] Suppose $P_{B}$ is a nestohedron on a connected building set $B$ on $[n+1]$. Then if $P_{B|_S}$ for some $S\in B\backslash [n+1]$ has a nontrivial strictly defined $k$-fold Massey product in cohomology, then the same holds for $P_B$;
\item[(2)] Suppose $P_{B}$ is a flag nestohedron on a connected building set $B$ on $[n+1]$. Then if $P_{B/S}$ for some $S\in B\backslash [n+1]$ has a nontrivial strictly defined $k$-fold Massey product in cohomology, then the same holds for $P_B$.
\end{itemize}
\end{prop}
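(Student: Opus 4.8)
The plan is to handle the two parts separately, in each case reducing to the Massey-product lifting principle of Proposition~\ref{CohomEpi}, whose proof only uses that a full subcomplex inclusion induces a split ring epimorphism, together with the multigrading of Theorem~\ref{BPtheo} and the naturality of Massey operations.

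For statement (1) this is essentially immediate. By Lemma~\ref{NestSetRestrict} the full subcomplex of $N_B$ on the vertex set $B|_S$ is combinatorially equivalent to $N_{B|_S}$, so the natural embedding of building sets $B|_S\hookrightarrow B$ yields an inclusion $j\colon N_{B|_S}\hookrightarrow N_B$ onto a full subcomplex; applying Proposition~\ref{CohomEpi}(2) to $j$ lifts the given nontrivial strictly defined $k$-fold Massey product from $H^*(\mathcal Z_{P_S})$ to one in $H^*(\mathcal Z_{P_B})$ with the same properties.

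For statement (2) I would first observe, via the boundary formula of Lemma~\ref{NestBound}, that $P_{B|_S}\times P_{B/S}$ is the facet of $P_B$ corresponding to $S$; choosing any vertex $v$ of $P_{B|_S}$ exhibits $\{v\}\times P_{B/S}$, which is combinatorially $P_{B/S}$, as a face of that facet, hence a face of $P_B$. Since $P_B$ is flag, \cite[Lemma 2.13]{B-L} guarantees that the nerve complex of this face is a \emph{full} subcomplex of $K_{P_B}$, and together with \cite[Corollary 2.7]{B-L} the induced map in cohomology of moment-angle manifolds is a split ring epimorphism. The same lifting argument as in Proposition~\ref{CohomEpi}(2), applied now to the full subcomplex inclusion $K_{P_{B/S}}\hookrightarrow K_{P_B}$, then produces the required nontrivial strictly defined $k$-fold Massey product in $H^*(\mathcal Z_{P_B})$.

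The delicate point is part (2), and it is exactly where flagness is used. As the Remark after Lemma~\ref{NestSetRestrict} shows, there is no contraction analogue of Lemma~\ref{NestSetRestrict}: the full subcomplex of $N_B$ on $(B/S)\cap B$ need not be $N_{B/S}$, so one cannot feed a nested-set-complex inclusion directly into Proposition~\ref{CohomEpi} and must pass through the face structure of $P_B$ instead. For a general simple polytope a face only determines a subcomplex of the nerve, not a full one, and the corresponding cohomology map need not split; flagness is precisely what upgrades ``$P_{B/S}$ is a face of $P_B$'' to ``$K_{P_{B/S}}$ is a full subcomplex of $K_{P_B}$''. Identifying the face in the first step and re-running the lifting step are routine, so the whole content of the proof sits in this upgrade.
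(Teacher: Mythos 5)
Your proof is correct and follows essentially the same route as the paper: part (1) rests on the split ring epimorphism coming from the full-subcomplex inclusion $N_{B|_S}\hookrightarrow N_B$ (the paper cites Proposition~\ref{phiPsi}, you cite Lemma~\ref{NestSetRestrict} and Proposition~\ref{CohomEpi}, which encode the same facts), and part (2) identifies $P_{B/S}$ as a face of $P_B$ via Lemma~\ref{NestBound} and then uses flagness to upgrade the face embedding to a full-subcomplex embedding, which is exactly the content of the paper's citation of \cite[Theorem 2.16]{B-L}. Your explicit realization of $P_{B/S}$ as $\{v\}\times P_{B/S}$ inside the facet $P_{B|_S}\times P_{B/S}$ correctly fills in the small step the paper leaves implicit.
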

\begin{proof}
The first part of the statement is a direct corollary of Proposition~\ref{phiPsi} above. The second part follows from~\cite[Theorem 2.16]{B-L}, since by Lemma~\ref{NestBound} $P_{B/S}$ is always a face of $P_B$.
\end{proof}

\begin{coro}\label{GAseqMas}
Suppose $\mathcal P$ is one of the following classical families of graph-associahedra: associahedra, permutohedra, cyclohedra, or stellahedra. Then $\mathcal P=\{P^n|\,n\geq 0\}$ is a sequence of flag simple polytopes such that there exists a nontrivial $k$-fold Massey product in $H^*(\mathcal Z_{P^n})$, $n\geq 2$ with $k=2$, or 3. Moreover, existence of a nontrivial $k$-fold Massey product in $H^*(\mathcal Z_{P^n})$ implies existence of a nontrivial $k$-fold Massey product in $H^*(\mathcal Z_{P^l})$ for any $k\geq 3$ and any $l>n$.
\end{coro}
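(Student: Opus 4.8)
The proof will assemble three facts: that these families consist of flag simple polytopes, that each $\mathcal Z_{P^n}$ with $n\ge 2$ carries a nontrivial Massey product of order $2$ or $3$, and that such products propagate upward along the family. First I would observe that each of $As$, $Pe$, $Cy$, $St$ is the family of graph-associahedra of the chain, complete, cycle, and star graph on $[n+1]$ respectively; graph-associahedra are flag nestohedra (Carr--Devadoss; Buchstaber--Volodin), hence flag simple polytopes, while $P^0$ (a point) and $P^1$ (a segment) are trivially such. This settles the first assertion.

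For the existence of a nontrivial $k$-fold Massey product with $k\in\{2,3\}$ I would argue by cases on $n$. If $n\ge 3$, then $P^n$ is a graph-associahedron of dimension at least $3$: for $P^n\ne Pe^3$ (which among these four families occurs only as the $n=3$ permutohedron) Limonchenko's theorem in~\cite{L2} supplies a nontrivial triple Massey product $\langle\alpha_1,\alpha_2,\alpha_3\rangle$ with $\dim\alpha_i=3$, while for $P^n=Pe^3$ the construction of~\cite{L3} supplies a nontrivial triple Massey product with $\dim\alpha_2=3$ and $\dim\alpha_1=\dim\alpha_3=5$; in either case $k=3$. If $n=2$, then $P^2$ is a polygon (a pentagon for $As$ and $St$, a hexagon for $Pe$ and $Cy$), so $\mathcal Z_{P^2}$ is a $2$-connected closed orientable manifold of dimension at least $7$; by Theorem~\ref{BPtheo} the reduced cohomology of the disjoint union of two non-adjacent facets contributes a nonzero summand to $H^3(\mathcal Z_{P^2})$, and Poincar\'e duality then yields positive-degree classes $\alpha_1,\alpha_2$ with $\alpha_1\alpha_2\ne 0$, i.e.\ a nontrivial $2$-fold Massey product $\langle\alpha_1,\alpha_2\rangle$.

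For the monotonicity statement I would use that $As$, $Pe$, $Cy$, $St$ are geometric direct families of polytopes (Proposition~\ref{NestDirect}), so in particular they satisfy conditions (1)--(2) of Definition~\ref{ADFP}. Given a nontrivial $k$-fold Massey product $\langle\alpha_1,\ldots,\alpha_k\rangle$ in $H^*(\mathcal Z_{P^n})$ and any $l>n$, the transfer property recorded just after Definition~\ref{DFPM} produces a defined $k$-fold Massey product $\langle\beta_1,\ldots,\beta_k\rangle$ in $H^*(\mathcal Z_{P^l})$ together with the split ring epimorphism $(j_n^l)^*\colon H^*(\mathcal Z_{P^l})\to H^*(\mathcal Z_{P^n})$ induced by the inclusion of $K_{P^n}$ as a full subcomplex of $K_{P^l}$, and satisfying $(j_n^l)^*\langle\beta_1,\ldots,\beta_k\rangle=\langle\alpha_1,\ldots,\alpha_k\rangle$. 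If $\langle\beta_1,\ldots,\beta_k\rangle$ contained $0$, then $0=(j_n^l)^*(0)$ would lie in $\langle\alpha_1,\ldots,\alpha_k\rangle$, contradicting nontriviality of the latter; hence $\langle\beta_1,\ldots,\beta_k\rangle$ is a nontrivial $k$-fold Massey product in $H^*(\mathcal Z_{P^l})$. This argument works for every $k$, in particular for $k\ge 3$, and combined with the previous paragraph it gives the full statement.

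The only step that needs genuine care is the $n=2$ case: one must check that $H^3(\mathcal Z_{P^2})\ne 0$ and that its Poincar\'e-dual partner really pairs nontrivially under cup product, so that the $2$-fold product is nonvanishing rather than merely defined, and one must invoke Limonchenko's results in precisely the form needed for the single exceptional polytope $Pe^3$. Everything else --- flagness and the upward propagation --- is formal given the results already established in the paper.
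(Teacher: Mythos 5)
Your proof is correct, and it reaches the conclusion by a slightly different route than the paper for the existence part. The paper's own proof only invokes the dimension-three base cases from~\cite{L2,L3} (strictly defined nontrivial triple Massey products for $As^3$, $Pe^3$, $Cy^3$, $St^3$), then for every $P^n=P_B$ with $n\geq 4$ exhibits a four-element $S\in B$ with $P_{B|_S}$ equal to the corresponding three-dimensional member of the same family, and concludes by Proposition~\ref{MasseyNestohedra}; this one proposition simultaneously delivers the existence statement for $n\geq 4$ and the upward monotonicity. You instead cite the full strength of Limonchenko's theorem for \emph{every} graph-associahedron $P\neq Pe^3$ of dimension $n\geq 3$, which makes the existence claim immediate in each dimension without the restriction-to-$S$ step, and you prove monotonicity separately via the direct-family transfer property after Definition~\ref{DFPM} (equivalently, Proposition~\ref{CohomEpi}(2)) --- the same full-subcomplex/split-epimorphism mechanism that underlies Proposition~\ref{MasseyNestohedra}, so this part is essentially identical in substance. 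What your version buys is an explicit treatment of $n=2$ (the nonvanishing cup product on $\mathcal Z_{P^2}$ for a pentagon or hexagon via the Hochster decomposition and Poincar\'e duality), which the paper's proof silently omits even though the statement asserts $k=2$ or $3$ for all $n\geq 2$; what the paper's version buys is a lighter citation load, needing only the four three-dimensional cases plus the machinery already developed internally. Both arguments are sound.
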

\begin{proof}
In~\cite{L2,L3} it was proved that in cohomology of a moment-angle manifold over $As^3$, $Pe^3$, $Cy^3$, and $St^3$ there exists a strictly defined nontrivial triple Massey product. For any $P^{n}=P_B$ in these graph-associahedra families, $n\geq 4$, there exists an element $S\in B$ on 4 vertices such that $P_{B|_S}$ equals one of the 3-dimensional graph-associahedra above: $As^3$, $Pe^3$, $Cy^3$, or $St^3$. Therefore, Proposition~\ref{MasseyNestohedra} implies our statement.
\end{proof}

We finish this section with a generalization of Lemma~\ref{FlagNestMassey}.

\begin{defi}\label{FamilyF}
Consider a family of flag nestohedra on connected building sets that are obtained from $\mathcal P_{Mas}$ by applying substitution of building sets operation. We denote this family by $\mathcal F_{Mas}$.
\end{defi}

The notation from the previous definition is clearified by the next result.

\begin{theo}\label{NestMassey}
For any $l\geq 2, r\geq 1$, and any set $S=\{n_{i}\geq 2|\,1\leq i\leq r\}$ there is an element $P(S)=P_{B(S)}\in\mathcal F_{Mas}$ and an $l$-connected moment-angle manifold $M(l,S)$ over a multiwedge of $P(S)$ with a strictly defined and nontrivial $n$-fold Massey product in $H^*(M(l,S))$ for all $n\in S$.
\end{theo}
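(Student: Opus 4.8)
The plan is to pack the polytopes $P^{n_1}_{Mas},\dots,P^{n_r}_{Mas}$ into one flag nestohedron $P(S)\in\mathcal F_{Mas}$ using the substitution of building sets operation, to transport all the required Massey products onto $H^{*}(\mathcal Z_{P(S)})$ by the restriction machinery of Section~4, and then to replace $P(S)$ by a multiwedge (a $J$-construction) so as to raise the connectivity of the resulting moment-angle manifold without destroying those Massey products. (Alternatively, Theorem~\ref{MasAllOrder} already supplies all orders $2\le k\le\max S$ inside the single polytope $P^{\max S}_{Mas}\in\mathcal P_{Mas}\subseteq\mathcal F_{Mas}$, which could replace the first two steps; the substitution construction is preferable because $P(S)$ then genuinely depends on all of $S$.)

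First I would build $P(S)$ as follows. For $r=1$ take $B(S)=B(P,n_1)$, so that $P(S)=P^{n_1}_{Mas}$. For $r\ge 2$ fix a connected flag building set $B_{0}$ on $[r]$ — for instance the building set of $P^{r-1}_{Mas}\in\mathcal P_{Mas}$, or the segment building set when $r=2$ — and set
$$B(S)=B_{0}\bigl(B(P,n_1),\dots,B(P,n_r)\bigr),$$
the substitution of the connected building sets $B(P,n_i)$ on $[n_i+1]$ into $B_0$. This is a connected building set on $[k_1+\dots+k_r]$ with $k_i=n_i+1$, it depends only on $S$, and $P(S)=P_{B(S)}$ is a flag nestohedron, hence $P(S)\in\mathcal F_{Mas}$. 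Let $\Lambda_i\subset[k_1+\dots+k_r]$ be the $i$-th block of $k_i$ vertices. Straight from the definition of substitution one checks that $B(S)|_{\Lambda_i}=B(P,n_i)$ and that, since $r\ge 2$, $\Lambda_i$ is a non-maximal element of $B(S)$ (the maximal element being the whole ground set). Hence by Lemma~\ref{NestSetRestrict} the complex $N_{B(P,n_i)}$ is a full subcomplex of $N_{B(S)}=K_{P(S)}$, and by Proposition~\ref{CohomEpi}(1) the induced map $H^{*}(\mathcal Z_{P(S)})\to H^{*}(\mathcal Z_{P^{n_i}_{Mas}})$ is a split ring epimorphism. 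Combining Lemma~\ref{FlagNestMassey}, which supplies a strictly defined nontrivial $n_i$-fold Massey product in $H^{*}(\mathcal Z_{P^{n_i}_{Mas}})$, with Proposition~\ref{CohomEpi}(2) (equivalently Proposition~\ref{MasseyNestohedra}(1)), I would lift it to a strictly defined nontrivial $n_i$-fold Massey product in $H^{*}(\mathcal Z_{P(S)})$; running this over $i=1,\dots,r$ yields such a product for every $n\in S$ simultaneously.

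Next I would pass to a multiwedge. Let $K=K_{P(S)}$ on $m$ vertices and take $J=(j_1,\dots,j_m)$ with $\min_t j_t$ as large as needed (it will be enough to require $j_t\ge l+1$). By Construction~\ref{simpmultwedge}, $K(J)=K_{P(S)}(J)=K_{P(S)(J)}$ is again a polytopal sphere, so $M(l,S):=\mathcal Z_{P(S)(J)}$ is a closed moment-angle manifold over the multiwedge $P(S)(J)$. Using the homotopy equivalence between $\mathcal Z_{K(J)}$ and the polyhedral product over $K$ with $i$-th pair $(D^{2j_i},S^{2j_i-1})$ (see~\cite{BBCG15}), together with the Hochster-type decomposition of $H^{*}(\mathcal Z_{K(J)})$ into reduced cohomology of full subcomplexes of $K(J)$ as in Theorem~\ref{BPtheo}, one obtains $\widetilde H^{p}(M(l,S))=0$ for $1\le p\le l+1$, so $M(l,S)$ is $l$-connected. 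The same regrading of $H^{*}(\mathcal Z_{K(J)})$ relative to $H^{*}(\mathcal Z_{K})$ (see~\cite{BBCG15} and~\cite[Corollary 3.7]{L3}) carries each Massey product obtained above to a strictly defined nontrivial $n$-fold Massey product in $H^{*}(M(l,S))$, $n\in S$, at the cost of replacing the representing classes by classes of higher degree; since the theorem only prescribes the order of the product, this is harmless.

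I expect the last step to be the main obstacle: one must follow the $J$-construction simultaneously at the level of connectivity and at the level of the differential graded algebra $R(P(S))$, so that a defining system for a strictly defined nontrivial Massey product in $R(P(S))$ transfers to a defining system in $R(P(S)(J))$ and strict definedness and nontriviality survive the regrading. This cannot be reduced to the full-subcomplex arguments of Section~4, because for $j_t\ge 2$ the complex $K$ is not a full subcomplex of $K(J)$; it is precisely the content of~\cite{BBCG15,L3}. The earlier steps are routine once one verifies the identity $B(S)|_{\Lambda_i}=B(P,n_i)$ and the flagness of the substituted building set $B(S)$.
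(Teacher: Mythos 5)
Your first stage coincides with the paper's: the paper also forms $B_{1}(S)=B\bigl(B(P,n_{1}),\dots,B(P,n_{r})\bigr)$ with $B=B(P,r-1)$, notes via~\cite[Lemma 1.5.20]{TT} that $P_{B_{1}(S)}\cong P^{r-1}_{Mas}\times P^{n_{1}}_{Mas}\times\dots\times P^{n_{r}}_{Mas}$, and settles the case $l=2$ there; your transport of the Massey products through the full subcomplexes $N_{B(S)|_{\Lambda_i}}$ (Lemma~\ref{NestSetRestrict} plus Proposition~\ref{CohomEpi}) is an acceptable substitute for the paper's use of $\mathcal Z_{P_{1}\times P_{2}}\cong\mathcal Z_{P_{1}}\times\mathcal Z_{P_{2}}$ and retraction onto factors, since each block $\Lambda_i$ is a restriction of $B(S)$.

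The divergence, and the genuine gap, is in your last step. You apply the $J$-construction to the whole nerve complex $K_{P(S)}$ with all $j_{t}\geq l+1$ and then assert that the strictly defined nontrivial Massey products survive the regrading, deferring this to~\cite{BBCG15} and~\cite[Corollary 3.7]{L3}. But the transfer result actually available, \cite[Theorem 3.6]{L3}, is proved for the specific multiwedges $P(n,s)=P^{n}_{Mas}(J_{n,s})$ of the polytopes $P^{n}_{Mas}$ themselves, not for an arbitrary flag nestohedron such as your $P(S)$; you yourself flag this as ``the main obstacle,'' and as written it is an unproven claim rather than a citation. The paper's proof is organized precisely to avoid this: for $l\geq 3$ it multiwedges each factor \emph{separately}, setting $s=[\frac{l+1}{2}]$, $M(l,n_{i})=\mathcal Z_{P(n_{i},s)}$, where \cite[Theorem 3.6]{L3} applies verbatim, and then takes $M(l,S)=\mathcal Z^{J(l,S)}_{P_{B_{2}(S)}}\cong\prod_{i}\mathcal Z_{P(n_{i},s)}$ with $B_{2}(S)=B(1)\cdot\ldots\cdot B(r)$ the product in the ring of building sets; connectivity and the Massey products are then read off factor by factor, each factor being a retract of the product. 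The connected representative $P(S)=P_{B(S)}\in\mathcal F_{Mas}$ is recovered only at the end via \cite[Proposition 1.5.23]{TT}. To repair your argument, either prove the $J$-construction transfer for general $P(S)$ (essentially redoing the defining-system bookkeeping of~\cite{L3} in that generality), or switch to the paper's factor-by-factor multiwedge, which reduces everything to the already established case of a single $P^{n}_{Mas}$.
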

\begin{proof}
Consider the case $l=2$. By Lemma~\ref{FlagNestMassey} for the connected building sets $B(i)=B(P,n_{i}),1\leq i\leq r$ with $n_{i}\in S$ one has: there exists a strictly defined and nontrivial Massey product of order $n_i$ in $H^*(\mathcal Z_{P_{B(i)}})$. Then define a connected building set $B_{1}(S)=B(B(1),\ldots,B(r))$ to be the result of substitution of building sets (see~\cite[Construction 1.5.19]{TT}), where $B=B(P,r-1)$. Then $P_{B_{1}(S)}$ is a flag nestohedron which is combinatorially equivalent to $P^{r-1}_{Mas}\times P^{n_1}_{Mas}\times\ldots\times P^{n_r}_{Mas}$ (see~\cite[Lemma 1.5.20]{TT}) and, moreover, $P_{B_{1}(S)}\in\mathcal F$ with $M(2,S)=\mathcal Z_{P_{B_{1}(S)}}$.

Now suppose $l\geq 3$. Following~\cite[Definition 3.5]{L3} denote by $P(n,s)=P^{n}_{Mas}(J_{n,s})$ the corresponding multiwedge, see Definition~\ref{simpmultwedge}. Then for $s=[\frac{l+1}{2}]$ the nerve complex of $P(n,s)$ is $[\frac{l+1}{2}]$-connected and thus the generalized moment-angle manifold $M(l,n)=\mathcal Z_{{P^n}_{Mas}}^{J_{n,s}}\cong\mathcal Z_{P(n,s)}$ is $l$-connected. By~\cite[Theorem 3.6]{L3} there exists a strictly defined and nontrivial $n$-fold Massey product in $H^*(M(l,n))$.

Consider the product of the above building sets in the ring of building sets, see~\cite[\S 1.7]{TT}
$$
B_{2}(S)=B(1)\cdot\ldots\cdot B(r)
$$
and the ordered set of positive integers:
$$
J(l,S)=(J_{n_{1},s},\ldots,J_{n_{r},s}).
$$
Then let
$$
M(l,S)=\mathcal Z_{P_{B_{2}(S)}}^{J(l,S)}\cong\prod\limits_{i=1}^{r}\mathcal Z_{P(n_{i},s)}.
$$
This generalized moment-angle manifold is $l$-connected as a product of $l$-connected manifolds and has a strictly defined and nontrivial $n$-fold Massey product in $H^*(M(l,S))$ for any $n\in S$.

Finally, one has: $B_{1}(S)\in\mathcal F_{Mas}$ for all nonempty finite subsets $S$ of $\mathbb{N}$ with elements grater than one. Using~\cite[Proposition 1.5.23]{TT}, we can get a {\emph{connected}} building set $B(S)$, which can be obtained iterating substitution of building sets of polytopes in $\mathcal P_{Mas}$, such that $P_{B_{2}(S)}$ is combinatorially equivalent to $P_{B(S)}$. This finishes the proof.
\end{proof}

\begin{rema}
Note that both families of flag nestohedra on connected building sets $\mathcal P_{Mas}\subset\mathcal F_{Mas}$ are countable.
\end{rema}

%%%%%%%%%%%%%%%%%%%%%%%%%%%%%%%%%%%%%%%%%%%%%%%%%
%%%%%%%%%%%%%%%%%%%%%%%%%%%%%%%%%%%%%%%%%%%%%%%%


\begin{thebibliography}{99}

\bibitem{BaTa} I. K. Babenko, I. A. Taimanov, \emph{Massey products in symplectic manifolds}, Sb. Math., {\bf{191}} (2000), no.~8, 1107--1146.

\bibitem{BBCG15} 
A. Bahri, M. Bendersky, F. R. Cohen, and S. Gitler, \emph{Operations on polyhedral products and a new topological construction of infinite families of toric manifolds,} Homology Homotopy Appl. 17 (2015), no. 2, 137--160. MR 3426378; arXiv:1011.0094.

\bibitem{BaskM} 
I. V. Baskakov, \textit{Massey triple products in the cohomology of moment-angle complexes}, Russian Math. Surveys, 58:5 (2003), 1039--1041.

\bibitem{Bg}
F. A. Bogomolov, \emph{The actions of a circumference on smooth odd-dimensional spheres}, Math. Notes, 5:4 (1969), 241--244.

\bibitem{BT}
Raul Bott and Clifford Taubes, \emph{On the self-linking of knots. Topology and physics}. J.Math.Phys. {\bf{35}} (1994), no. 10, 5247--5287.

\bibitem{B} Victor M. Buchstaber, \emph{Ring of Simple Polytopes and Differential Equations}, Proceedings of the Steklov Institute of Mathematics,
 {\bf{263}}, 2008, 1--25.

\bibitem{BE} Victor M. Buchstaber, and Nickolay Yu. Erokhovets, \emph{Polytopes, Fibonacci numbers, Hopf algebras, and quasi-symmetric functions}, Russian Math. Surveys, {\bf{66}}:2 (2011), 271--367.

\bibitem{B-Kh} V. M. Buchstaber, and A. N. Kholodov, \emph{Boas--Buck structures on sequences of polynomials}, Funktsional. Anal. i Prilozhen., {\bf{23}}:4 (1989), 11--23; Funct. Anal. Appl., {\bf{23}}:4 (1989), 266--276. 

\bibitem{BK} V. M. Buchstaber, and E. V. Koritskaya, \emph{Quasilinear Burgers--Hopf Equation and Stasheff Polytopes}, Funktsional. Anal. i Prilozhen., 41:3 (2007), 34--47; Funct. Anal. Appl., 41:3 (2007), 196--207

\bibitem{B-L} V. M. Buchstaber, and I. Yu. Limonchenko, \emph{Embeddings of moment-angle manifolds and sequences of Massey products}, preprint (2018); arXiv:1808.08851v1.

\bibitem{bu-pa00-2}
Victor M.~Buchstaber and Taras E.~Panov, \emph{Torus actions,
combinatorial topology and homological algebra}, Uspekhi Mat.
Nauk~{\bf 55} (2000), no.~5, 3--106 (Russian). Russian Math.
Surveys~{\bf 55} (2000), no.~5, 825--921 (English translation).

\bibitem{BP04}
Victor M.~Buchstaber and Taras E.~Panov, \emph{Torus Actions in
Topology and Combinatorics} (in Russian). MCCME, Moscow, 2004.

\bibitem{TT} Victor M. Buchstaber, and Taras E. Panov, \emph{Toric Topology}, Mathematical Surveys and Monographs, 204, American Mathematical Society, Providence, RI, 2015.

\bibitem{BR} V. M. Buchstaber, and N. Ray, \emph{Tangential structures on toric manifolds, and connected sums of polytopes}, Internat. Math. Res. Notices, 2001, no. 4, 193--219.

\bibitem{BV} Victor M. Buchstaber, and Vadim D. Volodin, \emph{Precise upper and lower bounds for nestohedra}. Izv. Ross. Akad. Nauk, Ser. Mat.~{\bf 75} (2011), no.~6, 17--46 (Russian); Izv. Math.~{\bf 75} (2011), no.~6
(English translation).

\bibitem{BV2} Victor M. Buchstaber, and Vadim D. Volodin, \emph{Combinatorial 2-truncated cubes and applications}, Associahedra, Tamari Lattices, and Related Structures, Tamari Memorial Festschrift, Progress in Mathematics, {\bf{299}}, Birkh\"auser, Basel, 2012, 161--186.

\bibitem{CD} Michael P. Carr and Satyan L. Devadoss, \emph{Coxeter complexes and graph-associahedra}, Topology Appl. {\bf{153}} (2006), no. 12, 2155--2168.

\bibitem{DJ}
Michael W. Davis and Tadeusz Januszkiewicz, \emph{Convex
polytopes, Coxeter orbifolds and torus actions}. Duke Math.
J.~{\bf62} (1991), no.~2, 417--451.

\bibitem{DS} Graham Denham and Alexander I. Suciu, \emph{Moment-angle complexes, monomial ideals, and Massey products}, Pure and Applied Mathematics Quarterly, 3(1) (2007), (Robert MacPherson special issue, part 3), 25--60.

\bibitem{FS} Eva Maria Feichtner and Bernd Sturmfels, \emph{Matroid polytopes, nested sets and Bergman fans}, Port. Math. (N.S.) {\bf 62} (2005), no. 4, 437--468.

\bibitem{Hoch} 
M. Hochster, \emph{Cohen-Macaulay rings, combinatorics, and simplicial complexes}, in Ring theory, II (Proc. Second
Conf.,Univ. Oklahoma, Norman, Okla., 1975),  Lecture Notes in Pure
and Appl. Math., V. 26, 171--223, Dekker, New York, 1977.

\bibitem{L1}
Ivan Limonchenko, \emph{Massey products in cohomology of moment-angle manifolds for 2-truncated cubes,} Russian Math. Surveys, {\bf{71}}:2 (2016), 376--378.

\bibitem{L2}
Ivan Limonchenko, \emph{Topology of moment-angle-manifolds arising from flag nestohedra,} Ch. Ann. Math., {\bf{38B}}(6) (2017), 1287--1302; arXiv:1510.07778.

\bibitem{L3}
Ivan Limonchenko, \emph{Topology of polyhedral products over simplicial multiwedges}, preprint (2017); arXiv:1711.00461.

\bibitem{P} Taras E. Panov, \emph{Cohomology of face rings, and torus actions},
in ``Surveys in Contemporary Mathematics''. London Math. Soc.
Lecture Note Series, vol.~\textbf{347}, Cambridge, U.K., 2008, 165--201; arXiv:math.AT/0506526.

\bibitem{Post05} A.~Postnikov, \emph{Permutohedra, associahedra, and beyond}, Int. Math. Res. Not. (2009), no. 6, 1026--1106; arXiv: math.CO/0507163.

\bibitem{PRW06} A.~Postnikov, V.~Reiner, L.~Williams, \emph{Faces of generalized permutohedra},  Documenta Mathematica, {\bf{13}} (2008), 207--273; arXiv:math/0609184v2.

\bibitem{S}
James D. Stasheff, \emph{Homotopy associativity of H-spaces. I.}, Trans. Amer. Math. Soc.~\textbf{108} (1963), 275--292.


\end{thebibliography}
\end{document}